\numberwithin{equation}{section}
\theoremstyle{plain}
\newtheorem{theorem}{Theorem}[section]
\newtheorem{remark}[theorem]{Remark}
\newtheorem{lemma}[theorem]{Lemma}
\newtheorem{proposition}[theorem]{Proposition}
\newtheorem{definition}[theorem]{Definition}
\newtheorem{corollary}[theorem]{Corollary}
\newtheorem{Example}{Example}[section]
\numberwithin{equation}{section}
\begin{document}

\title[Landau equation]
{Cauchy problem for the spatially homogeneous \\
Landau equation with Shubin class initial datum\\
and Gelfand-Shilov smoothing  effect}

\author[Hao-Guang Li \& Chao-Jiang Xu]
{Hao-Guang Li and Chao-Jiang Xu}

\address{Hao-Guang Li,
\newline\indent
School of Mathematics and Statistics, South-Central University for Nationalities
\newline\indent
430074, Wuhan, P. R. China}
\email{lihaoguang@mail.scuec.edu.cn}

\address{Chao-Jiang Xu,
\newline\indent
School of Mathematics and Statistics, Wuhan University 430072,
Wuhan, P. R. China
\newline\indent
Universit\'e de Rouen, CNRS UMR 6085, Laboratoire de Math\'ematiques Rapha\"el Salem 
\newline\indent
76801 Saint-Etienne du Rouvray, France
}
\email{chao-jiang.xu@univ-rouen.fr}

\date{\today}

\subjclass[2010]{35H20, 35E15, 35B65,76P05,82C40}

\keywords{Spatially homogeneous Landau equation, spectral decomposition, ultra-analytic smoothing effect, Shubin space, Gelfand-Shilov space}

\begin{abstract}
In this work, we study the nonlinear spatially homogeneous Landau equation with Maxwellian molecules, by using the spectral analysis, we show that the non linear Landau operators is almost linear, and we prove the existence of weak solution for the Cauchy problem with the initial datum belonging to Shubin space of negative index which conatins the probability measures.  Based on this spectral decomposition, we prove also that the Cauchy problem enjoys $S^{\frac 12}_{\frac 12}$-Gelfand-Shilov smoothing effect, meaning that the weak solution of the Cauchy problem with Shubin class initial datum is ultra-analytics and exponential decay for any positive time.
\end{abstract}

\maketitle
%\tableofcontents

%%%%%%%%%%%%%%%%%%%%%%%%%%%%%%
%%%%%%%%%%%%%%%%%%%%%%%%%%%%%%

\section{Introduction}\label{S1}
In this work, we study the spatially homogeneous Landau equation
\begin{equation}\label{eq1.10}
\left\{
\begin{array}{ll}
   \partial_t f= Q_L(f,f),\\
  f|_{t=0}=f_0,
\end{array}
\right.
\end{equation}
where $f=f(t,v)\ge0$ is the density distribution function depending on the variables
$v\in\mathbb{R}^{3}$ and the time $t\geq0$. The Landau bilinear collision operator is given by
\begin{equation*}
Q_L(g,f)(v)
=\triangledown_v\cdot\left(\int_{\mathbb{R}^{3}}
a(v-v_*)
\big(g(v_*)(\triangledown_vf)(v)-(\triangledown_vg)(v_*)f(v)\big)
dv_*\right),
\end{equation*}
where $a(v)=(a_{i,j}(v))_{1\leq\,i,j\leq3}$ stands for the non-negative symmetric matrix
$$
a(v)=(|v|^2\textbf{I}-v\otimes\,v)|v|^{\gamma}\in\,M_3(\mathbb{R}),\quad\,-3<\gamma<+\infty.
$$
In this work, we only consider the Cauchy problem \eqref{eq1.10} with the Maxwellian molecules, that means
$\gamma=0$. For the non-negative initial datum $f_0$, we suppose
\begin{equation}\label{f-0}
\int_{\mathbb{R}^3}f_0(v)dv=1, \,\int_{\mathbb{R}^3}v_jf_0(v)dv=0,\,j=1,2,3,\int_{\mathbb{R}^3}|v|^2f_0(v)dv=3.
\end{equation}
We shall study the linearization of the Landau equation \eqref{eq1.10} near the absolute Maxwellian distribution
$$
\mu(v)=(2\pi)^{-\frac 32}e^{-\frac{|v|^{2}}{2}}.
$$
Considering the fluctuation of density distribution function
$$
f(t,v)=\mu(v)+\sqrt{\mu}(v)g(t,v),
$$
since $Q_L(\mu,\mu)=0$, the Cauchy problem \eqref{eq1.10} is reduced to the Cauchy problem
\begin{equation} \label{eq-1}
\left\{ \begin{aligned}
         &\partial_t g+\mathcal{L}(g)={\bf L}(g, g),\,\,\, t>0,\, v\in\mathbb{R}^3,\\
         &g|_{t=0}=g_{0},
\end{aligned} \right.
\end{equation}
with $g_0(v)=\mu^{-\frac 12}f_0(v) -\sqrt{\mu}$, where
$$
\mathcal{L}(g)=-\mu^{-\frac 12}\Big(Q_L(\sqrt{\mu}g,\mu)+Q_L(\mu,\sqrt{\mu}g)\Big),\quad {\bf L}(g, g)=\mu^{-\frac 12}Q_L(\sqrt{\mu}g,\sqrt{\mu}g).
$$
The linear operator $\mathcal{L}$ is non-negative (see \cite{NYKC2}) with the null space
$$
\mathcal{N}=\text{span}\left\{\sqrt{\mu},\,v_1\sqrt{\mu},\,v_2\sqrt{\mu},\,v_3
\sqrt{\mu},\,|v|^2\sqrt{\mu}\right\}.
$$
Then the assumption \eqref{f-0} on the initial datum $f_0$ reduces to
$$%\begin{equation} \label{g-0}
\left\{ \begin{aligned}
         &\int_{\mathbb{R}^3}\sqrt{\mu}(v)g_0(v)dv=0, \\
&\int_{\mathbb{R}^3}v_j\sqrt{\mu}(v)g_0(v)dv=0,\,j=1,2,3,\\
         &\int_{\mathbb{R}^3}|v|^2\sqrt{\mu}(v)g_0(v)dv=0.
\end{aligned} \right.
$$%\end{equation}
This shows that $g_0\in \mathcal{N}^{\perp}$.  We recall the spectral decomposition of the linear
Landau operator (see Apendix \ref{appendix} and \cite{Boby}, \cite{NYKC2}).
\begin{equation}\label{1.5+}
\mathcal{L}(\varphi_{n, l, m})=\lambda_{n, l}\, \varphi_{n, l, m},\quad
n,\, l\in\mathbb{N},\,\, -l\leq\,m\leq\,l
\end{equation}
where $\left\{\varphi_{n,l,m}\right\}_{n,l\in\mathbb{N},|m|\leq\,l}$ is an orthonormal basis of $L^2(\mathbb{R}^3)$
composed by eigenvectors of the harmonic oscillator $\mathcal{H}=-\triangle_v +\frac{|v|^2}{4}$  and the Laplace-Beltrami operator on the unit sphere $\mathbb{S}^2$,
\begin{equation*}
\mathcal{H}(\varphi_{n, l, m})=(2n+l+\frac 32)\, \varphi_{n, l, m},\quad -\Delta_{\mathbb{S}^2} (\varphi_{n,l,m}) =l(l+1)\varphi_{n,l,m}.
\end{equation*}
The eigenvalues of \eqref{1.5+} satisfies : $\lambda_{0,0}=\lambda_{0,1}=\lambda_{1,0}=0$,  $\lambda_{0,2}=12$  and for $2n+l>2$,
\begin{equation}\label{lambda}
\lambda_{n, l}=2(2n+l)+l(l+1).
\end{equation}
Using this spectral decomposition, the definition of the operators $e^{c\mathcal{H}^s}$ and $\mathcal{H}^\alpha$ are then classical.

We introduce the following function spaces: Gelfand-Shilov spaces, for $0< s\le 1$,
$$
S^{\frac{1}{2s}}_{\frac{1}{2s}}(\mathbb{R}^3)=\Big\{u\in \mathcal{S}'(\mathbb{R}^3);\,\,  \exists \, c>0,\,
e^{c \mathcal{H}^{s}}u \in L^2(\mathbb{R}^3)\Big\}\, ;
$$
and the Shubin spaces, for $\beta\in \mathbb{R}$, (see \cite{Shubin}, Ch. IV, 25.3),
\[
Q^{\beta}(\mathbb{R}^3)=\Big\{u\in \mathcal{S}'(\mathbb{R}^3);\,\,   \|u\|_{Q^{\beta}(\mathbb{R}^3)} =\bigl\|\mathcal{H}^{\frac{\beta}{2}} \, u\bigr\|_{L^2(\mathbb{R}^3)}
<+\infty\Big\}.
\]
We have
\begin{align*}
Q^{\beta}(\mathbb{R}^3)\subset H^\beta(\mathbb{R}^3),\quad\forall \beta\ge0,\\
H^\beta(\mathbb{R}^3)\subset Q^{\beta}(\mathbb{R}^3) ,\quad\forall \beta <0,
\end{align*}
where $H^\beta(\mathbb{R}^3)$ is the usual Sobolev spaces. In particular, for $\beta<-\frac 32$ the Shubin space 
$Q^\beta(\mathbb{R}^3)$ contains the probability mesures (see \cite{CMWY,MO,MY,TV} and \cite{LX2017}).
See Appendix \ref{appendix} for more properties of Gelfand-Shilov spaces and the Shubin spaces.

It is showed in \cite{MPX} that, for $g_0\in L^2(\mathbb{R}^3)$ with $f_0=\mu+\sqrt{\mu} g_0\ge 0$, the solution of
the Cauchy problem \eqref{eq-1} obtained in \cite{Villani1998-1} belongs to $ S^{\frac{1}{2}}_{\frac{1}{2}}(\mathbb{R}^3)$ for any $t>0$.
In this work, we consider the initial datum which belongs to the Shubin spaces of negative index. The main theorem of this paper is in the following.

\begin{theorem}\label{trick}
Let $\alpha\leq0$, there exists $c_0>0$ such that for any initial datum $g_0\in\, Q^{\alpha}(\mathbb{R}^3)\cap\mathcal{N}^{\perp}$ with
\begin{equation}\label{initial}
\|\mathbb{S}_2g_0\|_{L^2(\mathbb{R}^3)}\leq c_0,
\end{equation}
the Cauchy problem \eqref{eq-1} admits a global weak solution
$$
g\in L^{+\infty}([0, +\infty[; \, Q^{\alpha}(\mathbb{R}^3)).
$$
Moreover, we have the Gelfand-Shilov smoothing effect of Cauchy problem, and there exists $c_1>0$ such that for any $t>0$,
\begin{equation*}
\|e^{c_1 t\mathcal{H}}\mathcal{H}^{\frac{\alpha}{2}}g(t)\|_{L^2(\mathbb{R}^3)}\leq
\|g_0\|_{Q^{\alpha}(\mathbb{R}^3)}.
\end{equation*}
\end{theorem}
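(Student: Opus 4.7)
The plan is to expand the unknown in the eigenbasis of the linearized Landau operator and exploit its diagonal structure together with a sharp algebraic structure for the nonlinear term. Since $g(t)\in\mathcal{N}^\perp$, I write
$$g(t,v)=\sum_{\substack{n,l\ge 0,\ 2n+l\ge 2\\ |m|\le l}}g_{n,l,m}(t)\,\varphi_{n,l,m}(v),$$
and use \eqref{1.5+} together with $\mathcal{H}\varphi_{n,l,m}=(2n+l+\tfrac32)\varphi_{n,l,m}$ to recast \eqref{eq-1} as the infinite ODE system
$$\dot g_{n,l,m}(t)+\lambda_{n,l}\,g_{n,l,m}(t)=\bigl\langle\mathbf{L}(g,g),\varphi_{n,l,m}\bigr\rangle_{L^2(\mathbb{R}^3)}.$$
The formula \eqref{lambda} (together with the boundary value $\lambda_{0,2}=12$) gives the crucial coercivity $\lambda_{n,l}\ge\kappa(2n+l+\tfrac32)$ for some absolute $\kappa>1$ whenever $2n+l\ge 2$, i.e.\ $\mathcal{L}\ge\kappa\mathcal{H}$ on $\mathcal{N}^\perp$.

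Second, I run the weighted energy estimate with the multiplier $e^{2c_1t\mathcal{H}}\mathcal{H}^\alpha g$. A direct computation, using that $\partial_t e^{2c_1 t\mathcal{H}}=2c_1\mathcal{H}\,e^{2c_1 t\mathcal{H}}$, yields
$$\tfrac12\tfrac{d}{dt}\bigl\|e^{c_1t\mathcal{H}}\mathcal{H}^{\alpha/2}g\bigr\|_{L^2}^2+\bigl\langle(\mathcal{L}-c_1\mathcal{H})g,\,e^{2c_1t\mathcal{H}}\mathcal{H}^\alpha g\bigr\rangle=\bigl\langle\mathbf{L}(g,g),\,e^{2c_1t\mathcal{H}}\mathcal{H}^\alpha g\bigr\rangle.$$
All operators on the left are simultaneously diagonal in the basis $\{\varphi_{n,l,m}\}$, so picking $c_1<\kappa$ turns $\mathcal{L}-c_1\mathcal{H}$ into a nonnegative operator on $\mathcal{N}^\perp$ and leaves a quantitative coercive remainder of order $c_*\bigl\|e^{c_1t\mathcal{H}}\mathcal{H}^{(\alpha+1)/2}g\bigr\|_{L^2}^2$, available to absorb the nonlinear contribution.

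Third — and this is where the main obstacle lies — I must estimate the trilinear form on the right. The key algebraic input, already advertised in the abstract ("the nonlinear Landau operator is almost linear"), is that the triple product
$\bigl\langle\mathbf{L}(\varphi_{n_1,l_1,m_1},\varphi_{n_2,l_2,m_2}),\varphi_{n,l,m}\bigr\rangle_{L^2}$
vanishes unless tight selection rules hold: Bobylev's Fourier representation of $Q_L$ at Maxwellian molecules enforces a sum rule on the harmonic-oscillator level, while the Clebsch--Gordan decomposition of products of spherical harmonics restricts $(l_1,l_2,l)$ and $(m_1,m_2,m)$. These rules effectively pair each high-mode factor with a low-mode partner, so the trilinear form is dominated by the product of a low-mode norm of $g$ and the coercive quantity from the previous step. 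The smallness hypothesis \eqref{initial} on $\mathbb{S}_2g_0$ controls precisely this low-mode norm at time zero, and the low-mode projection of \eqref{eq-1} itself decouples into a finite closed system whose norm is propagated by a standard Gronwall argument; taking $c_0$ small enough makes the trilinear term strictly smaller than the coercive remainder.

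Finally, combining these bounds yields $\tfrac{d}{dt}\bigl\|e^{c_1t\mathcal{H}}\mathcal{H}^{\alpha/2}g\bigr\|_{L^2}^2\le 0$, which integrates to the claimed Gelfand--Shilov smoothing inequality. To turn the a priori estimate into an honest existence statement I would use Galerkin truncation: solve the finite-dimensional nonlinear ODE system for $g^N(t)=\sum_{2n+l\le N}g^N_{n,l,m}(t)\varphi_{n,l,m}$, derive the estimate uniformly in $N$, and pass to the limit by weak-$*$ compactness in $L^\infty([0,+\infty[;Q^{\alpha}(\mathbb{R}^3))$, the nonlinear term being stable under this limit thanks to the same selection-rule structure. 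The hardest step remains the algebraic analysis of the trilinear coefficients of $\mathbf{L}$; once that is in hand, everything else reduces to linear spectral bookkeeping and Gronwall.
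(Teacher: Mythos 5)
Your outline reproduces the paper's strategy (eigenbasis expansion, coercivity $\mathcal{L}\ge \kappa\mathcal{H}$ on $\mathcal{N}^{\perp}$, weighted energy estimate with $e^{2c_1t\mathcal{H}}\mathcal{H}^{\alpha}$, smallness of the low modes, truncation and passage to the limit), but the step you defer is precisely the mathematical content of the proof, and the justification you sketch for it would not suffice. Generic ``selection rules'' (a sum rule on harmonic-oscillator levels from Bobylev's formula plus Clebsch--Gordan constraints on the angular indices) do \emph{not} pair each high-mode factor with a low-mode partner: a product of two high modes can perfectly well satisfy such rules. What actually makes $\mathbf{L}$ almost linear is that the Landau kernel $a(v-v_*)$ is a polynomial of degree two in $v_*$, so only moments of order at most two of the first argument survive; on the eigenbasis this yields $\mathbf{L}(\varphi_{\tilde n,\tilde l,\tilde m},\varphi_{n,l,m})=0$ whenever $2\tilde n+\tilde l>2$ (Proposition \ref{expansion} (v)), and the surviving coefficients must then be computed explicitly via the Fourier representation \eqref{fourier} and the spherical-harmonic product formulas, and shown to grow at most like one power of the eigenvalue, as in \eqref{A1}--\eqref{A3}. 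That linear growth is exactly what the coercive remainder $\|\mathcal{H}^{\frac{\alpha+1}{2}}\cdot\|_{L^2}^2$ (one power of $\mathcal{H}$, split between the two high-mode factors) can absorb; without these quantitative inputs your energy inequality does not close. Asserting that the trilinear form is ``dominated by a low-mode norm times the coercive quantity'' is the statement of Proposition \ref{estimatetri}, not a proof of it.

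A second, smaller gap: the closing of the estimate requires $\|\mathbb{S}_2 g(t)\|_{L^2}\le c_0$ for \emph{all} $t>0$, not only at $t=0$, and a ``standard Gronwall argument'' on the closed low-mode block, which is quadratic, only gives local-in-time or exponentially growing control. The paper instead shows that $g_0\in\mathcal{N}^{\perp}$ forces the $\mathcal{N}$-components to vanish identically, so the low-mode block degenerates to the linear equation $\partial_t g_{0,2,m}+12\,g_{0,2,m}=0$ and $\|\mathbb{S}_2 g(t)\|_{L^2}=e^{-12t}\|\mathbb{S}_2 g_0\|_{L^2}$ (Proposition \ref{SSN2}); this exponential decay is what enters \eqref{ele}. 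Your Galerkin truncation is harmless and essentially coincides with the paper's construction, since the truncated problem \eqref{eq-2} is the Galerkin system; in fact the triangular structure of \eqref{ODE-1} allows the coefficients to be built once and for all by induction, with no $N$-dependence. Note finally that passing to the limit in the weak formulation also uses a bound of the type \eqref{est-2} for $\mathbf{L}(g,g)$ in $Q^{\alpha-2}(\mathbb{R}^3)$, which again rests on the same trilinear estimates you have not established.
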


\begin{remark}.

1) The orthogonal projectors  $\{\mathbb{S}_N, N\in\mathbb{N}\}$ is defined, for $g\in \mathcal{S}'(\mathbb{R}^3)$,
\begin{equation}\label{SN}
\mathbb{S}_Ng=\sum^{N}_{k=0}\sum_{\substack{2n+l=k}}\sum_{|m|\leq\,l}\langle g,\varphi_{n,l,m}\rangle\varphi_{n,l,m}\in \mathcal{S}(\mathbb{R}^3) .
\end{equation}

2) The constant $c_0$ in \eqref{initial} is not asked to be very small, see the Remark \ref{c-0-1}. On the other hand,
the condition \eqref{initial} is a restriction for the initial datum on $\mathbb{S}_2g_0$, but not a smallness hypothesis for
the initial datum $g_0$.

3) For the Landau equation (also Boltzmann equation), a physics condition on the initial datum is $f_0=\mu+\sqrt{\mu} g_0\ge 0$ which implies the non-negativity of solution $f=\mu+\sqrt{\mu} g$. On the other hand, from the partial differential equations
point of view, for the Cauchy problem \eqref{eq1.10} (also \eqref{eq-1}), we don't need to impose this non-negative condition.
So that, in the Theorem \ref{trick}, we do not ask for the  initial datum  $f_0=\mu+\sqrt{\mu} g_0$ to be non-negative.

4) Combining this Theorem with the results of \cite{Villani1998-1} and \cite{MPX} (see also \cite{MX,Villani1998-2}),  we get a complete result for the Cauchy problem
\eqref{eq-1} with initial datum $g_0\in\, Q^\beta(\mathbb{R}^3)\cap\mathcal{N}^{\perp}, \beta\in\mathbb{R}$:
The existence of global (weak) solution and $S^{\frac{1}{2}}_{\frac{1}{2}}$-Gelfand-Shilov smoothing effect of Cauchy problem.

5) It is well known that  the single Dirac mass on the origin is a stationary solution of the Cauchy problem \eqref{eq1.10}.  The following example is somehow surprise.

\begin{Example}\label{Example}
Let
$$f_0=\delta_0-\left(\frac{3}{2}-\frac{|v|^2}{2}\right)\mu$$
be the initial datum of the Cauchy problem \eqref{eq1.10}, then
$f_0=\mu+\sqrt{\mu}g_0$ with
\begin{equation}\label{ex1}
g_0=\frac{1}{\sqrt{\mu}}\delta_0-\left(\frac{5}{2}-\frac{|v|^2}{2}\right)\sqrt{\mu}\in Q^{\alpha}(\mathbb{R}^3)\cap\mathcal{N}^{\perp},\,\,\,\,\,\alpha<-\frac{3}{2},
\end{equation}
and $\|\mathbb{S}_2g_0\|_{L^2(\mathbb{R}^3)}=0$.  Then Theorem \ref{trick} imply that the Cauchy problem \eqref{eq1.10} admits a global solution
$$
f=\mu+\sqrt{\mu}g\in  L^{+\infty}([0, +\infty[; \, Q^{\alpha}(\mathbb{R}^3) ) \cap
C^{0}(]0, +\infty[; \, S^{\frac{1}{2}}_{\frac{1}{2}}(\mathbb{R}^3) ).
$$
\end{Example}
\end{remark}

This paper is arranged as follows : In the Section \ref{S2}, we introduce the spectral analysis
of the Landau operators and prove that the nonlinear Landau operator is almost diagonal.  By using this decomposition, we can present explicitly the formal solutions to the Cauchy problem  \eqref{eq-1}  by transforming it into an infinite system of ordinary differential equations. In the Section \ref{S3}, we establish an upper bounded estimates for the nonlinear operators. We prove the main theorem \ref{trick} in the Section \ref{S4}, and collect the main technical computations in the Section \ref{S5}. In the Section \ref{appendix}, we give the proof of the Example \ref{Example} and the characterization of the Gelfand-Shilov spaces and the Shubin spaces.

%%%%%%%%%%%%%%%%%%%%%%%%%%%%%%
%%%%%%%%%%%%%%%%%%%%%%%%%%%%%%
\section{Spectral analysis and formal solutions}\label{S2}

In this section, we study the algebra property of the nonlinear Landau operators on the orthonormal basis
$\{ \varphi_{n,l,m}\}$ of $L^2(\mathbb{R}^3)$,
$$
{\bf L}(\varphi_{\tilde{n},\tilde{l},\tilde{m}}, \varphi_{n,l,m}).
$$
Recall, for $n, l\in\mathbb{N}, m\in\mathbb{Z}, |m|\le l$,
\begin{equation*}
\varphi_{n,l,m}(v)=\left(\frac{n!}{\sqrt{2}\, \Gamma(n+l+3/2)}\right)^{1/2}
\left(\frac{|v|}{\sqrt{2}}\right)^{l}e^{-\frac{|v|^{2}}{4}}
L^{(l+1/2)}_{n}\left(\frac{|v|^{2}}{2}\right)Y^{m}_{l}\left(\frac{v}{|v|}\right),
\end{equation*}
where $\Gamma(\,\cdot\,)$ is the standard Gamma function, and

\noindent
-- $L^{(\alpha)}_{n}$ is the Laguerre polynomial of order $\alpha$ and degree $n$,
\begin{align*}
&L^{(\alpha)}_{n}(x)=\sum^{n}_{r=0}(-1)^{n-r}\frac{\Gamma(\alpha+n+1)}{r!(n-r)!
\Gamma(\alpha+n-r+1)}x^{n-r};
\end{align*}

\noindent
-- $Y^{m}_{l}(\sigma)$ is the orthonormal basis of spherical harmonics
\begin{equation*}
Y^{m}_{l}(\sigma)=N_{l,m}P^{|m|}_{l}(\cos\theta)e^{im\phi},\,\,|m|\leq l,
\end{equation*}
where $\sigma=(\cos\theta,\sin\theta\cos\phi,\sin\theta\sin\phi)$ and $N_{l,m}$ is  the normalisation factor.
It is obviously that, the conjugate of $Y^{m}_{l}(\sigma)$ satisfies
$$
\overline{Y^{m}_{l}(\sigma)}=Y^{-m}_{l}(\sigma).
$$

\noindent
-- $P^{|m|}_{l}$~is the Legendre functions of the first kind of order $l$ and degree $|m|$
$$%\begin{equation}\label{Plm}
P^{|m|}_{l}(x)= (1-x^2)^\frac{|m|}{2}\frac{d^{|m|}}{dx}\left(\frac{1}{2^ll!}\frac{d^l}{dx^l}(x^2-1)^l\right).
$$%\end{equation}
Then, $\{ \varphi_{n,l,m}\}\subset\mathcal{S}(\mathbb{R}^3)$ the Schwartz function space, and
\begin{align*}
&\varphi_{0,0,0}(v)=\sqrt{\mu},\,\qquad\qquad\,\, \varphi_{0,1,0}(v)=v_1\sqrt{\mu},\\
&\varphi_{0,1,1}(v)=\frac{v_2+iv_3}{\sqrt{2}}\sqrt{\mu},\,\,\,\,\,
\varphi_{0,1,-1}(v)=\frac{v_2-iv_3}{\sqrt{2}}\sqrt{\mu},\\
&\varphi_{1,0,0}(v)=\sqrt{\frac{2}{3}}\left(\frac{3}{2}-\frac{|v|^2}{2}\right)\sqrt{\mu}\, ,
\end{align*}
and
$$
\mathcal{N}=\text{span}\left\{\varphi_{0,0,0},\,\varphi_{0,1,0},\,\varphi_{0,1,1},\,
\varphi_{0,1,-1},\,\varphi_{1,0,0}\right\}.
$$
We have also the explicit form of the eigenfunctions $\{\varphi_{0,2,m_2}, |m_2|\le2\}$ :
\begin{equation}\label{S22}
\begin{array}{l}
\varphi_{0,2,0}(v)=\sqrt{\frac{1}{3}}\left(\frac{3}{2}v_1^2-\frac{1}{2}|v|^2\right)\sqrt{\mu},\quad
\varphi_{0,2,1}(v)
=\frac{v_1 v_2+iv_1v_3}{\sqrt{2}}\sqrt{\mu},\\
\varphi_{0,2,-1}(v)=\frac{v_1 v_2-iv_1v_3}{\sqrt{2}}\sqrt{\mu},\quad
\varphi_{0,2,2}(v)
=\left(\frac{v_2^2-v^2_3}{2\sqrt{2}}+i\frac{v_2v_3}{\sqrt{2}}\right)\sqrt{\mu},\\
\varphi_{0,2,-2}(v)
=\left(\frac{v_2^2-v^2_3}{2\sqrt{2}}-i\frac{v_2v_3}{\sqrt{2}}\right)\sqrt{\mu}.
\end{array}
\end{equation}

We have the following algebraic identities :

\begin{proposition}\label{expansion}
For $n,l\in\mathbb{N}$, $|m|\leq\,l$, we have
\begin{align*}
&(i)\quad{\bf L}(\varphi_{0,0,0},\varphi_{n,l,m})=-\left(2(2n+l)+l(l+1)\right)\varphi_{n,l,m};\\
&(ii)\quad{\bf L}(\varphi_{0,1,m_1},\varphi_{n,l,m})\\
&\qquad=A^-_{n,l,m,m_1}\varphi_{n+1,l-1,m_1+m}+A^+_{n,l,m,m_1}\varphi_{n,l+1,m_1+m}, \,\forall\,|m_1|\leq1;\\
&(iii)\quad{\bf L}(\varphi_{1,0,0},\varphi_{n,l,m})=\frac{4\sqrt{3(n+1)(2n+2l+3)}}{3}
\varphi_{n+1,l,m};\\
&(iv)\quad{\bf L}(\varphi_{0,2,m_2},\varphi_{n,l,m})
=A^1_{n,l,m,m_2}
\varphi_{n+2,l-2,m+m_2}\\
&\qquad\quad+A^2_{n,l,m,m_2}\varphi_{n+1,l,m+m_2}+A^3_{n,l,m,m_2}\varphi_{n,l+2,m+m_2}, \,\forall\,|m_2|\leq2;\\
&(v)\quad{\bf L}(\varphi_{\tilde n,\tilde{l},\tilde{m}},\varphi_{n,l,m})=0, \,\forall\,2\tilde n+\tilde{l}>2,\, |\tilde m|\le \tilde l.
\end{align*}
where the coefficients will be precisely defined in Section \ref{S5}.
\end{proposition}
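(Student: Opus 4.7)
The plan is to exploit the closed form of the Landau operator with Maxwellian molecules ($\gamma=0$). Integration by parts on the defining formula, using the cancellation $a(w)w=0$, yields
$$Q_L(g,f)(v)=\bar a^{g}_{ij}(v)\,\partial_i\partial_j f(v)-\bar c^{g}(v)\,f(v),$$
where $\bar a^{g}_{ij}=a_{ij}*g$ and $\bar c^{g}=\big(\sum_{i,j}\partial_i\partial_j a_{ij}\big)*g=-6\int g\,dv$. Because $a_{ij}(v-v_*)$ is polynomial of total degree two in $v_*$, the coefficient $\bar a^{g}_{ij}(v)$ is determined by just the moments $\int g\,dv$, $\int v_*g\,dv$ and $\int v_{*,i}v_{*,j}g\,dv$, while $\bar c^{g}$ is determined by $\int g\,dv$. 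This structural fact drives the whole proof.

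Part $(v)$ follows immediately: when $g=\sqrt{\mu}\,\varphi_{\tilde n,\tilde l,\tilde m}$, each relevant moment is an $L^2$-inner product of the form $\langle\varphi_{\tilde n,\tilde l,\tilde m},\,v^{\alpha}\sqrt{\mu}\rangle$ with $|\alpha|\le 2$. The five-dimensional span of $\{v^{\alpha}\sqrt{\mu}:|\alpha|\le 2\}$ coincides with the span of $\{\varphi_{n,l,m}:2n+l\le 2\}$, as one reads off from the explicit expressions for $\varphi_{0,0,0},\varphi_{0,1,m_1},\varphi_{1,0,0}$ and \eqref{S22}. Hence, when $2\tilde n+\tilde l>2$, orthonormality of the Hermite basis forces every such moment to vanish, so $\bar a^{g}\equiv 0$, $\bar c^{g}=0$, and $\mathbf{L}(\varphi_{\tilde n,\tilde l,\tilde m},\varphi_{n,l,m})=0$.

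For $(i)$--$(iv)$ I would compute the moments of $\sqrt{\mu}\,\varphi_{\tilde n,\tilde l,\tilde m}$ in each of the finitely many remaining cases and conjugate by the Gaussian. Writing $\sqrt{\mu}\,\varphi_{n,l,m}=\mu\,\tilde p_{n,l,m}$ and using $\partial_j(\mu\tilde p)=\mu(\partial_j-v_j)\tilde p$, one obtains
$$\mu^{-1/2}Q_L\bigl(\sqrt{\mu}\varphi_{\tilde n,\tilde l,\tilde m},\sqrt{\mu}\varphi_{n,l,m}\bigr)=\sqrt{\mu}\,R_{\tilde n,\tilde l,\tilde m}\,\tilde p_{n,l,m},$$
where $R_{\tilde n,\tilde l,\tilde m}$ is a polynomial-coefficient differential operator of order at most two. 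Combining the radial identity $|v|^2\Delta-D_v^2-D_v=\Delta_{\mathbb{S}^2}$ (with $D_v=v\cdot\nabla$) and the eigenvalue relations
$$-\Delta_{\mathbb{S}^2}\tilde p_{n,l,m}=l(l+1)\,\tilde p_{n,l,m},\qquad(-\Delta+D_v)\tilde p_{n,l,m}=(2n+l)\,\tilde p_{n,l,m}$$
(the second coming from $\mathcal{H}\varphi_{n,l,m}=(2n+l+\tfrac32)\varphi_{n,l,m}$), the operator $R_{0,0,0}$ collapses to the scalar $-[l(l+1)+2(2n+l)]$, giving $(i)$. For $(iii)$ the moments reduce $\bar a^{g}_{ij}$ to a constant multiple of $\delta_{ij}$ with $\bar c^{g}=0$, so $R_{1,0,0}\propto\Delta$; after the same simplifications one is left with $(|v|^2-D_v)\tilde p_{n,l,m}-(2n+l+3)\tilde p_{n,l,m}$, and the three-term Laguerre recurrence identifies this as the multiple of $\varphi_{n+1,l,m}$ appearing in $(iii)$. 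Cases $(ii)$ and $(iv)$ follow the same template: the first- and second-order moments of $\sqrt{\mu}\,\varphi_{0,\tilde l,m_{\tilde l}}$ give $\bar a^{g}_{ij}$ linear, resp.\ quadratic, in $v$, so $R$ is first, resp.\ second, order, and its action on $L_n^{(l+1/2)}(|v|^2/2)\,Y_l^{m}(\sigma)$ is decoded via the product formula $Y_{\tilde l}^{m_{\tilde l}}Y_l^{m}=\sum_{l'}C_{l,\tilde l,l'}^{m,m_{\tilde l}}Y_{l'}^{m+m_{\tilde l}}$ combined with Laguerre recurrences. Parity selection rules allow only $l'\in\{l-1,l+1\}$ for $\tilde l=1$ and $l'\in\{l-2,l,l+2\}$ for $\tilde l=2$, matching exactly the pattern in $(ii)$ and $(iv)$.

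The only real obstacle is book-keeping: the explicit coefficients $A^{\pm}_{n,l,m,m_1}$ and $A^{j}_{n,l,m,m_2}$ require combining the normalizations $c_{n,l}=\bigl(n!/(\sqrt 2\,\Gamma(n+l+\tfrac32))\bigr)^{1/2}$, the three-term Laguerre recurrence together with $xL_n^{(\alpha)\prime}=nL_n^{(\alpha)}-(n+\alpha)L_{n-1}^{(\alpha)}$, and the Clebsch--Gordan-type matrix elements of multiplication by $v_i$ and $v_iv_j$ in the $Y_l^m$ basis. Each is a closed-form but lengthy evaluation, deferred to Section~\ref{S5}.
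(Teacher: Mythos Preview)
Your approach is correct and in several respects cleaner than the paper's. The structural observation that, for $\gamma=0$, the first–order terms cancel so that
\[
Q_L(g,f)=\bar a_{ij}^{g}\,\partial_i\partial_j f-\bar c^{g}f
\]
depends only on the moments of $g$ up to order two is exactly the right starting point; the paper never writes this identity down and instead recomputes the many vanishing cross-terms by hand in each of Lemmas~\ref{lemma5.1+}--\ref{lemma5.5+}. Your treatment of $(v)$ is essentially identical to the paper's, and your derivation of $(i)$ via the radial identity $|v|^2\Delta-D_v^2-D_v=\Delta_{\mathbb{S}^2}$ together with $(-\Delta+D_v)\tilde p_{n,l,m}=(2n+l)\tilde p_{n,l,m}$ is the position-space analogue of the paper's use of \eqref{Laplace-Beltrami} and \eqref{partial_2}.

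The genuine methodological difference is in $(ii)$--$(iv)$. The paper passes to the Fourier side, exploiting the explicit formula \eqref{fourier}, $\widehat{\Psi_{n,l,m}}(\xi)=B_{n,l}|\xi|^{2n+l}e^{-|\xi|^2/2}Y_l^m(\xi/|\xi|)$: multiplication by $|\xi|^2$ simply shifts $n\mapsto n+1$, and the angular factor is handled via Corollary~\ref{harmonic-c}. This makes the index shifts transparent and largely bypasses the Laguerre recurrences. Your route stays in position space and relies on the Laguerre identities (three-term recurrence and $xL_n^{(\alpha)\prime}=nL_n^{(\alpha)}-(n+\alpha)L_{n-1}^{(\alpha)}$); this is equally valid and arguably more elementary, at the cost of somewhat heavier bookkeeping for $(ii)$, where one must also control the cross-terms coming from the first moments of $\Psi_{0,1,m_1}$.

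One small slip: the span of $\{v^\alpha\sqrt{\mu}:|\alpha|\le 2\}$ is ten-dimensional, not five (you may be conflating it with the null space $\mathcal{N}$); it still coincides with $\mathrm{span}\{\varphi_{n,l,m}:2n+l\le 2\}$, so the orthogonality argument for $(v)$ is unaffected.
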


The proof of this Proposition and the estimates of $A^1_{n,l,m,m_2}, A^2_{n,l,m,m_2}$ and $A^3_{n,l,m,m_2}$ are the main technic parts of this paper, we will give it in the Section \ref{S5}.

Now we come back to the Cauchy problem \eqref{eq-1}, we search a solution of the form
\begin{equation}\label{g-S}
g(t)=\sum^{+\infty}_{n=0}\sum^{+\infty}_{l=0}\sum^{l}_{m=-l}g_{n,l,m}(t)\varphi_{n,l,m},\, \,\,\, g_{n,l,m}(t)=\left\langle g(t),\, \varphi_{n,l,m}\right\rangle
\end{equation}
with initial data
$$%\begin{equation}\label{g-S0}
g|_{t=0}=g_0=\sum^{+\infty}_{n=0}\sum^{+\infty}_{l=0}\sum^{l}_{m=-l}g^0_{n,l,m}\varphi_{n,l,m},\,\,\,\,\,g^0_{n,l,m}=\left\langle g_0,\, \varphi_{n,l,m}\right\rangle.
$$%\end{equation}
The hypothesis $g_0\in Q^{\alpha}(\mathbb{R}^3)\cap\mathcal{N}^{\perp}$ is equivalent to,
$$
g^0_{0,0,0}=g^0_{0,1,1}=g^0_{0,1,0}=g^0_{0,1,-1}=g^0_{1,0,0}=0,
$$
and
$$
\|g_0\|^2_{Q^\alpha}=\sum^{+\infty}_{n=0}\sum^{+\infty}_{l=0}\sum^{l}_{m=-l}(2n+l+\frac 32)^\alpha\,
|g^0_{n,l,m}|^2<\infty.
$$
See Appendix \ref{appendix} for the norm of Shubin space.

It follows from Proposition \ref{expansion} that,  we have the almost diagonalization of non linear 
Landau operators, meaning that for the function $f, g$ define by the series \eqref{g-S}, for $n,l\in\mathbb{N}$, $m\in\mathbb{Z}$, $|m|\leq\,l$
\begin{equation}\label{Ln}
\begin{split}
&\Big({\bf L}(f, g), \varphi_{n,l,m}\Big)_{L^2}\\
=&-\left(2(2n+l)+l(l+1)\right)f_{0,0,0}(t)g_{n,l,m}(t)\\
&\quad+
\sum_{\substack{|m^*|\leq\,l+1,|m_1|\leq1\\m_1+m^*=m}}A^-_{n-1,l+1,m^*,m_1}f_{0,1,m_1}(t)g_{n-1,l+1,m^*}(t)\\
&\quad+
\sum_{\substack{|m^*|\leq\,l-1,|m_1|\leq1\\m_1+m^*=m}}A^+_{n,l-1,m^*,m_1}f_{0,1,m_1}(t)g_{n,l-1,m^*}(t)\\
&\quad+\frac{4\sqrt{3n(2n+2l+1)}}{3}f_{1,0,0}(t)g_{n-1,l,m}(t)\\
&\quad+\sum_{\substack{|m^*|\leq\,l+2,|m_2|\leq2\\m^*+m_2=m}}
A^1_{n-2,l+2,m^*,m_2}f_{0,2,m_2}(t)g_{n-2,l+2,m^*}(t)\\
&\quad+\sum_{\substack{|m^*|\leq\,l,|m_2|\leq2\\m^*+m_2=m}}A^2_{n-1,l,m^*,m_2}f_{0,2,m_2}(t)g_{n-1,l,m^*}(t)\\
&\quad+\sum_{\substack{|m^*|\leq\,l-2,|m_2|\leq2\\m^*+m_2=m}}A^3_{n,l-2,m^*,m_2}f_{0,2,m_2}(t)g_{n,l-2,m^*}(t),
\end{split}
\end{equation}
with the conventions
$$
g_{n, l, m}\equiv 0, \,\,\mbox{if}\,\, n<0 \,\,\mbox{or}\,\, l<0\,,
$$
and
\begin{equation*}%\label{ex2}
\Big( \mathcal{L}( g ), \varphi_{n,l,m}\Big)_{L^2} =\lambda_{n,l}\,g_{n,l,m}(t),\,\,\,\,n,\, l\in\mathbb{N},\, m\in\mathbb{Z},\, |m|\leq\,l.
\end{equation*}
We remark from \eqref{Ln} that,
\begin{equation}\label{L-perp}
\forall\,f,g\in\mathcal{N}^{\perp}\Rightarrow\quad {\bf L}(f, g)\in\mathcal{N}^{\perp}.
\end{equation}
So that, formally, if $g$ is a solution of the Cauchy problem \eqref{eq-1},
we find that the family of functions $\{g_{n,l,m}(t); n, l\in\mathbb{N}, |m|\leq\,l\}$, satisfy the following infinite system
of the differential equations, $n,l\in\mathbb{N}, |m|\leq\,l$,
\begin{equation} \label{ODE-0}
\left\{ \begin{aligned}
         &\partial_t g_{n,l,m}(t)+\lambda_{n,l}\,g_{n,l,m}(t)=\big({\bf L}(g, g), \varphi_{n,l,m}\big)_{L^2},\,\,\,\, t>0;\\
         &g_{n,l,m}|_{t=0}=\left\langle g_0,\varphi_{n,l,m}\right\rangle=g^0_{n,l,m}
\end{aligned} \right.
\end{equation}
where $\big({\bf L}(g, g), \varphi_{n,l,m}\big)_{L^2}$ was precisely defined in \eqref{Ln}. 
We have firstly,

\begin{proposition}\label{SSN2}
Let  $g_0\in Q^{\alpha}(\mathbb{R}^3)\cap \mathcal{N}^{\perp} $, assume that $g$ is a solution of the Cauchy problem \eqref{eq-1}
of the form \eqref{g-S}, then we have
\begin{equation}\label{N}
g_{0,0,0}(t)=g_{0,1,0}(t)=g_{0,1,1}(t)=g_{0,1,-1}(t)=g_{1,0,0}(t)=0,\quad \forall\,t\ge 0,
\end{equation}
and
\begin{equation}\label{N2}
g_{0,2,m}(t)=e^{-12t}g^0_{0,2,m},\quad t\ge 0,\,\,\,|m|\le 2.
\end{equation}
\end{proposition}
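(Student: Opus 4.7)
The plan is to exploit the infinite ODE system \eqref{ODE-0} together with the explicit expansion \eqref{Ln} for $\big({\bf L}(g,g),\varphi_{n,l,m}\big)_{L^2}$, processed one target index at a time. The hypothesis $g_0\in\mathcal{N}^{\perp}$ provides the initial values $g^{0}_{0,0,0}=g^{0}_{0,1,m}=g^{0}_{1,0,0}=0$, so for each of the eight relevant indices (the five spanning $\mathcal{N}$ and the five on the $(0,2)$-eigenspace) it suffices to show that the corresponding right-hand side of \eqref{ODE-0} collapses, after accounting for the convention $g_{n',l',m'}\equiv 0$ when $n'<0$ or $l'<0$, and for the vanishings proved at earlier steps.

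For \eqref{N}, I process the five null-space indices in the order $(0,0,0)\to(0,1,m)\to(1,0,0)$, which avoids any circularity. At $(0,0,0)$, every $g$-factor appearing on the right of \eqref{Ln} carries at least one negative subscript (namely $(-1,1),(0,-1),(-1,0),(-2,2),(-1,0),(0,-2)$), and the term proportional to $f_{1,0,0}$ additionally has the vanishing prefactor $\sqrt{3n}\big|_{n=0}=0$; since $\lambda_{0,0}=0$, this yields $\partial_t g_{0,0,0}=0$, hence $g_{0,0,0}(t)\equiv 0$. At $(0,1,m)$ with $|m|\le 1$, the only surviving contributions are proportional to $g_{0,0,0}(t)$ (the diagonal term and the third-line term $A^{+}_{0,0,0,m}\,g_{0,1,m}\,g_{0,0,0}$), so $g_{0,1,m}(t)\equiv 0$. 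At $(1,0,0)$, every surviving contribution carries either $g_{0,0,0}(t)$ or a product of two factors $g_{0,1,\cdot}(t)$, all now known to vanish, so $g_{1,0,0}(t)\equiv 0$.

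For \eqref{N2}, I inspect the ODE at $(0,2,m)$, where $\lambda_{0,2}=12$. In \eqref{Ln} the diagonal term and the last-line term (whose target hits $(n,l-2)=(0,0)$) each carry the factor $g_{0,0,0}(t)=0$; the third-line term carries a factor $g_{0,1,m_1}(t)=0$; the fourth-line term has the vanishing prefactor $\sqrt{3n}\big|_{n=0}=0$; and the second, fifth and sixth lines involve $g$-indices $(-1,3),(-2,4),(-1,2)$ that are zero by the negative-index convention. Thus the equation reduces to
\begin{equation*}
\partial_t g_{0,2,m}(t)+12\,g_{0,2,m}(t)=0,
\end{equation*}
which integrates with initial datum $g^{0}_{0,2,m}$ to give $g_{0,2,m}(t)=e^{-12t}g^{0}_{0,2,m}$.

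The only real obstacle is the bookkeeping: one must methodically run through the seven types of contributions in \eqref{Ln} for each of the eight target indices and identify, in each case, whether a term vanishes by the negative-index convention, by a vanishing scalar prefactor, or by substituting a previously established vanishing. No new algebraic identity beyond those already in Proposition \ref{expansion} is required; the argument is algebraic throughout, no estimate is involved.
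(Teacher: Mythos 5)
Your proposal is correct and follows essentially the same route as the paper: substitute the expansion \eqref{Ln} into the system \eqref{ODE-0} and treat the indices in the order $(0,0,0)\to(0,1,m)\to(1,0,0)\to(0,2,m)$, eliminating terms via the negative-index convention, vanishing prefactors, and the previously established vanishings, then integrate the resulting linear equations (with $\lambda_{0,2}=12$ for \eqref{N2}). Two cosmetic slips do not affect the argument: the relevant indices number ten rather than eight, and at $(0,0,0)$ the diagonal term of \eqref{Ln} is killed not by a negative subscript but by its coefficient $2(2n+l)+l(l+1)=0$.
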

\begin{proof}
(1) Substituting $n=0,l=0,m=0$ into the above infinite ODE system \eqref{ODE-0}, one has
$$
\partial_tg_{0,0,0}(t)+\lambda_{0,0}g_{0,0,0}(t)=0.
$$
We remind that $\lambda_{0,0}=0$, then
$$
g_{0,0,0}(t)=g^0_{0,0,0}=0.
$$
(2) Now we set $n=0$, $l=1$, and $|m|\leq1$, the ODE system \eqref{ODE-0} turn out to be
\begin{align*}
&\partial_t g_{0,1,m}(t)+\lambda_{0,1}\,g_{0,1,m}(t)\\
&=-4g_{0,0,0}(t)g_{0,1,m}(t)+A^+_{0,0,0,m}g_{0,1,m}(t)g_{0,0,0}(t)
\end{align*}
By using the known results
$$
\lambda_{0,1}=0,\quad g_{0,0,0}(t)=0,
$$
one can verify that
$$
g_{0,1,m}(t)=g^0_{0,1,m}=0,\quad\forall|m|\leq1.
$$
(3) Take now $n=1, l=0, m=0$ in \eqref{ODE-0}, we have
\begin{align*}
&\partial_t g_{1,0,0}(t)+\lambda_{1,0}\,g_{1,0,0}(t)\\
&=-4g_{0,0,0}(t)g_{1,0,0}(t)+
\sum_{\substack{|m^*|\leq\,1,|m_1|\leq1\\m_1+m^*=0}}A^-_{0,1,m^*,m_1}g_{0,1,m_1}(t)g_{0,1,m^*}(t)\\
&\quad+4g_{1,0,0}(t)g_{0,0,0}(t)+A^2_{0,0,0,0}g_{0,2,0}(t)g_{0,0,0}(t).
\end{align*}
Then
$$
\lambda_{1,0}=0,\quad\,g_{0,0,0}(t)=0,\quad\,g_{0,1,m'}(t)=0,\,\,\,\,\forall |m'|\leq1,
$$
imply
$$
g_{1,0,0}(t)=g^0_{1,0,0}=0.
$$
(4) Furthermore, for $n=0,l=2$ and $|m|\leq2$ in \eqref{ODE-0}, we have that
$$
g_{0,0,0}(t)=0,\,\,\,\, g_{0,1,m'}(t)=0,\quad\,\forall |m'|\leq1,
$$
imply
\begin{align*}
&\partial_t g_{0,2,m}(t)+\lambda_{0,2}\,g_{0,2,m}(t)=0.
\end{align*}
Recalled that $\lambda_{0,2}=12$ in \eqref{lambda}, we obtain,
$$g_{0,2,m}(t)=e^{-12t}g^0_{0,2,m},\quad\forall |m|\leq2.$$
This ends the proof of Proposition \ref{SSN2}.
\end{proof}

Substituting \eqref{N} and \eqref{N2} into the infinite system
of the differential equations \eqref{ODE-0},  we have, for all $2n+l > 2, |m|\leq\,l$,
\begin{equation}\label{ODE-1}
\left\{ \begin{aligned}
         &\partial_t g_{n,l,m}(t)+\lambda_{n,l}\,g_{n,l,m}(t)=\\
&\quad\quad+\sum_{\substack{|m^*|\leq\,l+2,|m_2|\leq2\\m^*+m_2=m}}
A^1_{n-2,l+2,m^*,m_2}e^{-12t}g^0_{0,2,m_2}\, g_{n-2,l+2,m^*}(t)\\
&\quad\quad+\sum_{\substack{|m^*|\leq\,l,|m_2|\leq2\\m^*+m_2=m}}A^2_{n-1,l,m^*,m_2}e^{-12t}g^0_{0,2,m_2}\, g_{n-1,l,m^*}(t)\\
&\quad\quad+\sum_{\substack{|m^*|\leq\,l-2,|m_2|\leq2\\m^*+m_2=m}}A^3_{n,l-2,m^*,m_2}e^{-12t}g^0_{0,2,m_2}\, g_{n,l-2,m^*}(t),\\
         &g_{n,l,m}|_{t=0}=g^0_{n,l,m},
\end{aligned} \right.
\end{equation}
with the convention
\begin{equation}\label{conv}
A^1_{n-2,l+2,m^*,m_2}=0, \,\,\mbox{if}\,\, n-2<0;\quad  A^2_{n-1,l, m^*,m_2}=0,\,\,\mbox{if}\,\,n-1<0.
\end{equation}
We can solve this infinite differential equation by induction.

In fact, for $n=0, l \ge 3, |m|\leq\,l$, the following system
$$%\begin{equation}\label{ODE-21}
\left\{ \begin{aligned}
         &\partial_t g_{0,l,m}(t)+\lambda_{0,l}\,g_{0,l,m}(t)\\
&=\sum_{\substack{|m^*|\leq\,l-2,|m_2|\leq2\\m^*+m_2=m}}A^3_{0,l-2,m^*,m_2}e^{-12t}g^0_{0,2,m_2}\,g_{0,l-2,m^*}(t),\\
         &g_{0,l,m}(0)= g^0_{0,l,m}.
\end{aligned} \right.
$$%\end{equation}
can be solved by induction on $l$ start from $l=3$ since $g_{0,1,m^*}(t)\equiv 0$ for all  $|m^*|\le 1$. For the general case of $l$,
the index of the right hand side are $l-2$, which have been already known by induction.

Then we solve the differential equations \eqref{ODE-1} for all $n\ge 1, l\ge 0$ and $|m|\leq\,l$. We also prove
by induction on $n$ and for fixed $n$ induction on $l$. Since for the first two terms on the right hand side of  \eqref{ODE-1}, the first index are less than $n-1$, and  for the last terms on the right hand side, the second index are less than $l-2$, which have been already known by induction. So that in each steps
of the induction, the right hand side of  \eqref{ODE-1}  is  already known by induction hypothesis. Then the differential equations are linear differential equations, and can be solved explicitly with any initial detum $g^0_{n,l,m}$. We get then the formal solution of Cauchy problem \eqref{eq-1} by solve the differential system \eqref{ODE-1}, and we have :

\begin{theorem}\label{th:ODE}
Let $\{g^0_{n,l,m};\,\, n, l\in\mathbb{N}, |m|\le l\}$ be a complex sequence with
$$
g^0_{0,0,0}=g^0_{0,1,1}=g^0_{0,1,0}=g^0_{0,1,-1}=g^0_{1,0,0}=0.
$$
Then the system \eqref{ODE-1} admits a sequence of solutions $\{g_{n,l,m}(t);\, 2n+l>2, |m|\le l\}$.

For all $N\ge2$, we note that
\begin{equation}\label{gN}
g_N(t)=\sum^{N}_{k=2}\sum_{\substack{2n+l=k\\n+l\ge2}} \sum_{|m|\le\,l}g_{n,l,m}(t)\varphi_{n,l,m}
\end{equation}
with
$$
g_{0,2,m}(t)=e^{-12t}g^0_{0,2,m},\,\,\,\,|m|\le 2,\,\,\, t>0.
$$
Then $g_N$ satisfies the following Cauchy problem
\begin{equation}\label{eq-2}
\left\{ \begin{aligned}
         &\partial_t g_N+\mathcal{L}(g_N)=\mathbb{S}_{N}{\bf L}(g_N, g_N),\,\\
         &g_{N}|_{t=0}=\sum_{\substack{2\leq2n+l\leq\,N\\n+l\ge2}} \sum_{|m|\le\,l} g^0_{n,l,m}\varphi_{n,l,m}.
\end{aligned} \right.
\end{equation}
\end{theorem}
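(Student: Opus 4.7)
The plan is to first construct the sequence $\{g_{n,l,m}(t)\}_{2n+l>2}$ by induction on the level $k=2n+l$, exploiting the triangular structure of \eqref{ODE-1}, and then verify that the partial sum $g_N$ defined in \eqref{gN} satisfies the truncated Cauchy problem \eqref{eq-2}.

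The key structural observation is that each term on the right-hand side of \eqref{ODE-1} is a product of a known exponential coming from $g_{0,2,m_2}$ with a factor $g_{n',l',m^*}(t)$ of strictly lower level: in all three sums one has $2n'+l'=2n+l-2<2n+l$. Thus the infinite system is triangular when ordered by $k=2n+l$. I would then induct on $k$: for fixed $k\ge 3$ and each $(n,l,m)$ with $2n+l=k$, $|m|\le l$, the right-hand side $F_{n,l,m}(t)$ of \eqref{ODE-1} is, by the inductive hypothesis, a known continuous function of $t$, explicitly a finite linear combination of products $e^{-12t}g^0_{0,2,m_2}\,g_{n',l',m^*}(t)$ with $2n'+l'=k-2$. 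The equation reduces to a scalar linear ODE with constant coefficient $\lambda_{n,l}$, solved explicitly by the integrating factor method:
\begin{equation*}
g_{n,l,m}(t)=e^{-\lambda_{n,l}t}g^0_{n,l,m}+\int_0^t e^{-\lambda_{n,l}(t-s)}F_{n,l,m}(s)\,ds.
\end{equation*}
The base case $k=3$ uses only $g_{0,1,m'}\equiv 0$ (from Proposition \ref{SSN2}) and the explicit $g_{0,2,m_2}(t)=e^{-12t}g^0_{0,2,m_2}$; the convention \eqref{conv} handles the boundary cases $n-2<0$ or $n-1<0$.

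For the second part, I would verify that $g_N$ satisfies \eqref{eq-2} by testing against $\varphi_{n,l,m}$. When $2n+l\le N$, one has $\mathbb{S}_N\varphi_{n,l,m}=\varphi_{n,l,m}$, so the identity to check reads
\begin{equation*}
\partial_t g_{n,l,m}(t)+\lambda_{n,l}g_{n,l,m}(t)=\bigl({\bf L}(g_N,g_N),\varphi_{n,l,m}\bigr)_{L^2}.
\end{equation*}
Expanding the right-hand side via \eqref{Ln} with $f=g=g_N$, all terms involving the components $(0,0,0)$, $(0,1,\pm 1)$, $(0,1,0)$, $(1,0,0)$ of $g_N$ vanish by construction, so only the three sums with coefficients $A^1,A^2,A^3$ survive. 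One then checks that every second index $(n',l',m^*)$ appearing in these sums satisfies $2n'+l'=2n+l-2\le N-2$, hence $g_{n',l',m^*}(t)$ is a genuine component of $g_N$. Substituting $g_{0,2,m_2}(t)=e^{-12t}g^0_{0,2,m_2}$ recovers exactly the right-hand side of \eqref{ODE-1}, which is satisfied by construction. When $2n+l>N$, both sides of \eqref{eq-2} project to zero: $g_N$ has no such component and $\mathbb{S}_N$ truncates the forcing. The initial condition in \eqref{eq-2} holds by the definition of $g_N$ at $t=0$.

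There is no genuine analytic obstacle here: the content is entirely algebraic/combinatorial, and once the triangularity in $k=2n+l$ is recognized, both the existence of the solution sequence and the consistency with the truncated PDE follow by routine bookkeeping. The only point requiring mild care is the matching of index ranges between the expansion \eqref{Ln} applied to $g_N$ and the right-hand side of \eqref{ODE-1}, which is immediate since the decrement in the level is exactly $2$ in every contributing term.
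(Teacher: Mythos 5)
Your proposal is correct and takes essentially the same route as the paper: the paper also solves \eqref{ODE-1} by exploiting the fact that every term on the right-hand side involves components of strictly lower order (organized there as an induction on $n$ and, for fixed $n$, on $l$, while your single induction on the level $k=2n+l$ packages the same triangularity a bit more cleanly), and the verification of \eqref{eq-2} is the same routine projection onto the basis $\{\varphi_{n,l,m}\}$. The only small imprecision is that at levels $k-2\in\{1,2\}$ some referenced entries (e.g.\ $g_{0,1,m^*}$, $g_{1,0,0}$) are not ``genuine components of $g_N$'' since they are excluded by the condition $n+l\ge2$ in \eqref{gN}, but they vanish by Proposition \ref{SSN2}, so the matching between \eqref{Ln} and \eqref{ODE-1} holds exactly as you claim.
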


The proof of the existence of weak solution of Theorem \ref{trick} is reduced to prove the convergence of the sequences $\{g_N; \,N\in \mathbb{N}\}$ in the function space $Q^{\alpha}(\mathbb{R}^3)$. Namely,
$$
g_N\rightarrow\,g(t)=\sum^{+\infty}_{k=2}\sum_{\substack{2n+l=k\\n+l\ge2}} \sum_{|m|\le\,l}g_{n,l,m}(t)\varphi_{n,l,m}\in\,Q^{\alpha}(\mathbb{R}^3),\quad\text{as}\,\, N\rightarrow+\infty.
$$
The Gelfand-Shilov regularity is reduced to prove: there exists a constant $c_1>0$, such that
\begin{equation*}
\forall t>0,\,\,\,\,
\|e^{c_1t\mathcal{H}}\mathcal{H}^{\frac{\alpha}{2}}g(t)\|^2_{L^2(\mathbb{R}^3)}
=\sum e^{c_1t (2n+l+\frac{3}{2})}(2n+l+\frac{3}{2})^{\alpha}|g_{n,l,m}(t)|^2<\infty.
\end{equation*}
This will be the main jobs of the Section \ref{S3} and Section \ref{S4}.

%%%%%%%%%%%%%%%%%%%%%%%%%%%%%%
%%%%%%%%%%%%%%%%%%%%%%%%%%%%%%

\section{The trilinear estimates for non linear operator}\label{S3}

To prove the convergence of the formal solution obtained in Theorem \ref{th:ODE}, we need to estimate the following trilinear terms
$$
\left({\bf L}(f,g),h\right)_{L^2(\mathbb{R}^3)},
\,\,\,f,g,h\in\mathscr{S}(\mathbb{R}^3)\cap \mathcal{N}^{\perp}\, .
$$
We need firstly the following estimates for the coefficients $A^1, A^2$ and $A^3$ (see their definition \eqref{An} in Section \ref{S5}) of the Proposition \ref{expansion}.

\begin{proposition}\label{recur}
For the coefficients of the Proposition \ref{expansion} defined in \eqref{An}, we have the following estimates:

\noindent
1) For $n,l\in\mathbb{N}$, $n\ge2$,
\begin{equation}\label{A1}
\max_{|m^*|\leq l}\sum_{\substack{|m|\leq\,l+2, |m_2|\leq2\\m+m_2=m^*}}
\left|A^1_{n-2,l+2,m,m_2}\right|^2\leq\frac{16n(n-1)}{3}.
\end{equation}

\noindent
2) For $n,l\in\mathbb{N}$, $n\ge1$,
\begin{align}\label{A2}
 A^2_{n-1,0,0,0}&=0;\nonumber\\
\max_{|m^*|\leq l}\sum_{\substack{|m|\leq\,l, |m_2|\leq2\\m+m_2=m^*}}
\Big|A^2_{n-1,l,m,m_2}\Big|^2&\leq\frac{4n(2n+2l+1)}{3},\,\forall\,\,l\geq1.
\end{align}
\noindent
3) For $n,l\in\mathbb{N}$, $l\ge2$,
\begin{equation}\label{A3}
\max_{|m^*|\leq l}\sum_{\substack{|m|\leq\,l-2, |m_2|\leq2\\m+m_2=m^*}}
\left|A^3_{n,l-2,m,m_2}\right|^2\leq\frac{(2n+2l+1)(2n+2l-1)}{2}.
\end{equation}
\end{proposition}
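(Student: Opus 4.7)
The plan is to start from the explicit formulas for $A^1, A^2, A^3$ that will be established in Section \ref{S5}. By construction, each coefficient is an inner product of the form $\bigl({\bf L}(\varphi_{0,2,m_2}, \varphi_{n,l,m}), \varphi_{n',l',m+m_2}\bigr)_{L^2(\mathbb{R}^3)}$, with $(n',l')$ determined by the index $i\in\{1,2,3\}$. Since both the Landau operator with Maxwellian molecules and the basis $\{\varphi_{n,l,m}\}$ factor into radial pieces (Laguerre polynomials in $|v|^2/2$) and angular pieces (spherical harmonics $Y^m_l$ on $\mathbb{S}^2$), each $A^i_{n,l,m,m_2}$ splits as
\[
A^i_{n,l,m,m_2} = R^i_{n,l}\cdot G^i_{l,l',m,m_2},
\]
where $R^i_{n,l}$ depends only on the radial quantum numbers and $G^i_{l,l',m,m_2}$ is an angular (Gaunt-type) coefficient.

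For each of \eqref{A1}, \eqref{A2}, \eqref{A3}, I would first treat the angular sum. By the Wigner--Eckart theorem, the coefficients $G^i_{l,l',m,m_2}$ are, up to a prefactor depending only on $(l,l')$, the Clebsch--Gordan coefficients $\langle l, m;\, 2, m_2 \mid l', m+m_2\rangle$. The unitarity of the Clebsch--Gordan matrix then delivers the key pointwise bound
\[
\sum_{\substack{|m|\le l,\,|m_2|\le 2 \\ m+m_2=m^*}} \bigl|\langle l, m;\, 2, m_2 \mid l', m^*\rangle\bigr|^2 \le 1,
\]
uniformly in $m^*$, which reduces each of the three estimates to an upper bound on the radial factor $|R^i_{n,l}|^2$. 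As a by-product, the vanishing $A^2_{n-1,0,0,0}=0$ of \eqref{A2} follows from the angular-momentum triangle inequality: coupling $\varphi_{0,2,0}$ (angular momentum $2$) with $\varphi_{n-1,0,0}$ (angular momentum $0$) can only produce $l'=2$, so the relevant Clebsch--Gordan coefficient for $l'=0$ is zero.

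The remaining task is to compute the radial factors $R^i_{n,l}$ from the formulas of Section \ref{S5}. These arise as integrals of products of two Laguerre polynomials $L^{(l+1/2)}_n$ and $L^{(l'+1/2)}_{n'}$ against polynomial weights coming from the matrix $a(v-v_*)$, weighted by the normalization constants $\bigl(n!/(\sqrt 2\,\Gamma(n+l+3/2))\bigr)^{1/2}$ of $\varphi_{n,l,m}$. Using the orthogonality relations and standard recurrence formulas for Laguerre polynomials, $|R^i_{n,l}|^2$ simplifies to a closed-form polynomial expression in $n$ and $l$, which one then checks matches the stated upper bounds $\tfrac{16n(n-1)}{3}$, $\tfrac{4n(2n+2l+1)}{3}$, and $\tfrac{(2n+2l+1)(2n+2l-1)}{2}$.

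The main obstacle will be the combinatorial bookkeeping of the Gamma-function normalizations and Laguerre coefficient identities in the radial computation, so that all constants collapse cleanly into the polynomial bounds stated in \eqref{A1}--\eqref{A3}. A secondary delicate point is to adopt the correct normalization convention for the angular factor $G^i$, so that the row-sum identity used is indeed bounded by $1$ rather than by a larger constant; otherwise the sharp constants in the radial factors would be lost.
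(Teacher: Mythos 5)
Your structural reading of the coefficients is correct: by \eqref{An} each $A^i$ is an explicit radial factor times a Gaunt integral $\int_{\mathbb{S}^2}Y^{m_2}_2Y^{m}_{l}Y^{-m_2-m}_{l'}\,d\omega$, the estimates amount to controlling the sum of the squared angular parts over $m+m_2=m^*$, and your triangle-rule argument for $A^2_{n-1,0,0,0}=0$ is fine (it is the same as the paper's observation that $P_2P_0=P_2$ integrates to zero against $Y^0_0Y^0_0$). Note, though, that no Laguerre computation is needed at this stage: the radial factors are already written out in \eqref{An} (they were produced in Lemma \ref{lemma5.4+}); the only content of the Proposition is the angular sum.

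The genuine gap is in your key quantitative step. Replacing the angular sum by the Clebsch--Gordan unitarity bound $\sum_{m+m_2=m^*}|\langle l,m;2,m_2\mid l',m^*\rangle|^2\le 1$ throws away the reduced matrix element, i.e.\ the $(l,l')$-dependent prefactor $\sqrt{\tfrac{5(2l+1)}{4\pi(2l'+1)}}\,\langle l,0;2,0\mid l',0\rangle$ relating the Gaunt integral to the CG coefficient, and after that loss the stated constants are unreachable. Concretely, for \eqref{A1} the radial factor squared read off from \eqref{An} is $\tfrac{16\pi}{15}\cdot 4n(n-1)=\tfrac{64\pi}{15}\,n(n-1)\approx 13.4\,n(n-1)$, which already exceeds the claimed bound $\tfrac{16}{3}n(n-1)\approx 5.33\,n(n-1)$; the missing factor is exactly the true value of the angular sum, $\tfrac{15(l+1)(l+2)}{8\pi(2l+1)(2l+3)}\le\tfrac{5}{4\pi}$, whose maximum at $l=0$ produces the constant $\tfrac{16}{3}$ (similarly for \eqref{A2} and \eqref{A3}). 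So ``reducing to an upper bound on $|R^i_{n,l}|^2$'' cannot work, and this is not a normalization convention one can choose away: the $l$-dependence of the reduced matrix element is essential. To complete the proof you must evaluate the angular sum exactly --- either via the closed formula for $\langle l,0;2,0\mid l',0\rangle$, or, as the paper does, by summing $|{\rm Gaunt}|^2$ with the addition theorem $P_k(\sigma\cdot\kappa)=\tfrac{4\pi}{2k+1}\sum_{|m|\le k}Y^m_k(\sigma)Y^{-m}_k(\kappa)$ and then using the Legendre linearization $P_2P_l=\tfrac{3(l+2)(l+1)}{2(2l+3)(2l+1)}P_{l+2}+\tfrac{(l+1)l}{(2l+3)(2l-1)}P_l+\tfrac{3l(l-1)}{2(2l+1)(2l-1)}P_{l-2}$ (Lemma \ref{recurrence}) together with orthogonality on $\mathbb{S}^2$ --- and only then maximize the resulting explicit rational function of $l$ to obtain $\tfrac{16n(n-1)}{3}$, $\tfrac{4n(2n+2l+1)}{3}$ and $\tfrac{(2n+2l+1)(2n+2l-1)}{2}$. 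These sharp constants matter downstream (they fix the constant $\tfrac{4\sqrt3}{3}+\sqrt2$ in the trilinear estimate and hence the admissible size $c_0$ of $\|\mathbb{S}_2g_0\|_{L^2}$), so the lossy unitarity bound is not an acceptable substitute.
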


We will give the proof of this Proposition in the Section \ref{S5}.

We now present the trilinear estimation for the nonlinear Landau operator ${\bf L}$,
for $g\in\mathcal{S}'(\mathbb{R}^3)\cap \mathcal{N}^{\perp}, N>2$, we note
\begin{equation}\label{SN2}
\tilde{\mathbb{S}}_N g
=\sum_{\substack{2\leq2n+l\leq\,N\\n+l\geq2}}\sum_{|m|\leq\,l}g_{n,l,m}\, \varphi_{n,l,m},\quad
g_{n,l,m}=\langle g,\, \varphi_{n,l,m}\rangle.
\end{equation}
Then we have the following trilinear estimates:

\begin{proposition}\label{estimatetri}
Let $f, g,h\in Q^{\alpha}(\mathbb{R}^3)\cap \mathcal{N}^{\perp}$ with $\alpha\leq0$, then for any $N\ge 2$,
\begin{align*}%\label{tri-1}
&|(\mathbf{L}(\tilde{\mathbb{S}}_N f, \, \tilde{\mathbb{S}}_N g),\mathcal{H}^{\alpha}\tilde{\mathbb{S}}_N h)_{L^2}|
\\
\leq&  \left(\frac{4\sqrt{3}}{3}+\sqrt{2}\right)\|\tilde{\mathbb{S}}_2f\|_{L^2}
\|\mathcal{H}^{\frac{\alpha+1}{2}}\tilde{\mathbb{S}}_{N-2} g\|_{L^2}
\|\mathcal{H}^{\frac{\alpha+1}{2}}\tilde{\mathbb{S}}_N h\|_{L^2},
\end{align*}
and also for any $c>0$, $t\geq0$,
\begin{align*}
&|({\bf L}(\tilde{\mathbb{S}}_Nf, \, \tilde{\mathbb{S}}_N g),e^{2ct\mathcal{H}}\mathcal{H}^{\alpha}\tilde{\mathbb{S}}_N h)_{L^2}|\notag\\
\leq& \left(\frac{4\sqrt{3}}{3}+\sqrt{2}\right)e^{2ct}\|\tilde{\mathbb{S}}_2f\|_{L^2}
\|e^{ct \mathcal{H}}\mathcal{H}^{\frac{\alpha+1}{2}}\tilde{\mathbb{S}}_{N-2}g\|_{L^2}
\|e^{ct \mathcal{H}}\mathcal{H}^{\frac{\alpha+1}{2}}\tilde{\mathbb{S}}_N h\|_{L^2}\,.
\end{align*}
\end{proposition}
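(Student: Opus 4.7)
The plan is to exploit the near-diagonality of $\mathbf{L}$ on the eigenbasis (Proposition~\ref{expansion}) to reduce the trilinear form to a finite explicit sum, and then bound each piece by repeated Cauchy--Schwarz using the coefficient estimates of Proposition~\ref{recur}. First, I expand $\mathbf{L}(\tilde{\mathbb{S}}_N f,\tilde{\mathbb{S}}_N g)$ bilinearly against the eigenbasis. Because $\tilde{\mathbb{S}}_N f$ is supported on modes with $n+l\geq 2$ and $2n+l\geq 2$, the five basis vectors spanning $\mathcal{N}$ (namely $\varphi_{0,0,0}$, $\varphi_{0,1,m_1}$, $\varphi_{1,0,0}$) are automatically excluded; combined with Proposition~\ref{expansion}(v), which kills the first argument whenever $2\tilde n+\tilde l>2$, the only surviving index in the first slot is $(\tilde n,\tilde l)=(0,2)$. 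Proposition~\ref{expansion}(iv) then gives
\[
\mathbf{L}(\tilde{\mathbb{S}}_N f,\tilde{\mathbb{S}}_N g)=\sum_{|m_2|\leq 2} f_{0,2,m_2}\sum_{(n,l,m)} g_{n,l,m}\bigl[A^1\varphi_{n+2,l-2,m+m_2}+A^2\varphi_{n+1,l,m+m_2}+A^3\varphi_{n,l+2,m+m_2}\bigr],
\]
where $(n,l,m)$ ranges over the support of $\tilde{\mathbb{S}}_N g$. Pairing against $\mathcal{H}^{\alpha}\tilde{\mathbb{S}}_N h$ splits this into three sums $T_1,T_2,T_3$. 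In each, the output index $(n',l')$ satisfies $2n'+l'=2n+l+2$, so the constraint $2n'+l'\leq N$ forces $2n+l\leq N-2$ on the $g$-side, which is precisely why $\tilde{\mathbb{S}}_{N-2}g$ (rather than $\tilde{\mathbb{S}}_N g$) appears on the right-hand side.

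Next I estimate each $T_j$ by three nested Cauchy--Schwarz inequalities. Stage one: for fixed $(n,l,m^\ast)$ with $m^\ast=m+m_2$, Cauchy--Schwarz in $(m,m_2)$ extracts the factor $\bigl(\sum_{m+m_2=m^\ast}|A^j|^2\bigr)^{1/2}$, which Proposition~\ref{recur} bounds uniformly in $m^\ast$. Stage two: Cauchy--Schwarz on $m^\ast$ against $h_{n',l',m^\ast}$ decouples $f$ from $g$ and produces the factor $\|\tilde{\mathbb{S}}_2 f\|_{L^2}$ (the $f$-sum collapses to $\sum_{|m_2|\le 2}|f_{0,2,m_2}|^2$ because $A^j$ only involves $f$ through $m_2$). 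Stage three: Cauchy--Schwarz on $(n,l)$ after the weight split
\[
(2n'+l'+\tfrac{3}{2})^{\alpha}\cdot(\text{coefficient bound})\leq K_j\,(2n+l+\tfrac{3}{2})^{(\alpha+1)/2}\,(2n'+l'+\tfrac{3}{2})^{(\alpha+1)/2}.
\]
To establish this pointwise inequality I would use elementary estimates such as $(n+2)(n+1)\leq(2n+l+\tfrac{3}{2})(2n+l+\tfrac{7}{2})$, $(n+1)(2n+2l+3)\leq(2n+l+\tfrac{3}{2})(2n+l+\tfrac{7}{2})$, and $(2n+2l+5)(2n+2l+3)\leq 4(2n+l+\tfrac{3}{2})(2n+l+\tfrac{7}{2})$ for $T_1,T_2,T_3$ respectively, combined with the monotonicity $\bigl((2n+l+\tfrac{7}{2})/(2n+l+\tfrac{3}{2})\bigr)^{\alpha/2}\leq 1$ valid for $\alpha\leq 0$ to absorb the residual index shift without loss. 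Summing the three constants delivers $\tfrac{4\sqrt{3}}{3}+\sqrt{2}$.

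For the exponentially weighted estimate, the key identity is
\[
e^{2ct(2n'+l'+3/2)}=e^{2ct}\cdot e^{ct(2n+l+3/2)}\cdot e^{ct(2n'+l'+3/2)},
\]
which holds precisely because $2n'+l'=2n+l+2$. This lets one distribute half of the exponential weight to $g$ and the other half to $h$ at the cost of the constant overhead $e^{2ct}$, after which the previous argument runs verbatim with $e^{ct\mathcal{H}}g$ and $e^{ct\mathcal{H}}h$ in place of $g$ and $h$.

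The main obstacle I expect is the bookkeeping across the three Cauchy--Schwarz stages. The order is delicate: one must apply Cauchy--Schwarz in $(m,m_2)$ first so that the max-over-$m^\ast$ structure of Proposition~\ref{recur} cooperates with the subsequent $\ell^2$-sum in $m^\ast$, and one must choose the final weight split so that the $g$- and $h$-sums reconstitute exactly $\|\mathcal{H}^{(\alpha+1)/2}\tilde{\mathbb{S}}_{N-2}g\|_{L^2}$ and $\|\mathcal{H}^{(\alpha+1)/2}\tilde{\mathbb{S}}_N h\|_{L^2}$. Any other ordering would either lose the exact constant or fail to isolate $\|\tilde{\mathbb{S}}_2 f\|_{L^2}$ as a clean factor.
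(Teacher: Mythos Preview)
Your overall strategy is exactly the paper's: expand via Proposition~\ref{expansion}, observe that only the $(0,2,m_2)$ modes of $f$ survive, split into three pieces, and estimate each by Cauchy--Schwarz using Proposition~\ref{recur}. Your observation that $2n'+l'=2n+l+2$ forces the $g$-side to live in $\tilde{\mathbb{S}}_{N-2}$, and your handling of the exponential weight, are both correct and match the paper. Your ordering of the Cauchy--Schwarz steps differs slightly from the paper's (the paper applies Cauchy--Schwarz first in $m_2$ to peel off $\|\tilde{\mathbb{S}}_2 f\|_{L^2}$, then in $m$), but your version works equally well.

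There is, however, a concrete numerical gap in your $T_1$ estimate that prevents you from reaching the stated constant. With your convention that $(n,l)$ is the $g$-index and $(n',l')=(n+2,l-2)$ the output, Proposition~\ref{recur} gives the coefficient bound $\frac{16(n+2)(n+1)}{3}$, and your inequality $(n+2)(n+1)\le(2n+l+\tfrac32)(2n+l+\tfrac72)$ yields $K_1=\tfrac{4}{\sqrt3}=\tfrac{4\sqrt3}{3}$. Combined with your $K_2=\tfrac{2\sqrt3}{3}$ and $K_3=\sqrt2$, the total is $2\sqrt3+\sqrt2$, not $\tfrac{4\sqrt3}{3}+\sqrt2$. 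The fix is to note that in this term one always has $l\ge2$ (since $l'=l-2\ge0$), and for $l\ge1$ the sharper inequality
\[
4(n+2)(n+1)=(2n+4)(2n+2)\le\bigl(2n+l+\tfrac72\bigr)\bigl(2n+l+\tfrac32\bigr)
\]
holds (each factor on the left is dominated by the corresponding factor on the right). This extra factor of $4$ reduces $K_1$ to $\tfrac{2\sqrt3}{3}$, and then the three constants indeed sum to $\tfrac{4\sqrt3}{3}+\sqrt2$. The paper achieves the same effect in its indexing via $(2n+l+\tfrac32)^{\alpha}(2n-2)\le(2n+l-\tfrac12)^{\alpha+1}$ together with $2n\le 2n+l+\tfrac32$.
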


The proof of this Proposition is similar to Lemma 3.5 in \cite{NYKC3}, Proposition 3.2 in \cite{LX2017} and Section 3  in \cite{GLX}.

\begin{proof}
Let $f, g,h\in Q^{\alpha}(\mathbb{R}^3) \cap \mathcal{N}^{\perp}$ with $\alpha\leq0$.   For $N\geq2$,
by using the orthogonal property of $\{\varphi_{n,l,m}; \, n,l\in\mathbb{N},  |m|\leq\,l\}$,
we can deduce from Proposition \ref{expansion}  and \eqref{conv} that
\begin{align*}
&({\bf L}(\tilde{\mathbb{S}}_Nf, \, \tilde{\mathbb{S}}_N g),\mathcal{H}^{\alpha}\tilde{\mathbb{S}}_N h)\\
=&\sum_{\substack{2\leq2n+l\leq\,N\\n\geq2}}\sum_{\substack{|m|\leq\,l+2,|m_2|\leq2\\|m+m_2|\leq l}}
A^1_{n-2,l+2,m,m_2}(2n+l+\frac{3}{2})^{\alpha}f_{0,2,m_2}g_{n-2,l+2,m}h_{n,l,m+m_2}\\
&+
\sum_{\substack{2\leq2n+l\leq\,N\\n\geq1,n+l\geq2}}\sum_{\substack{|m|\leq\,l,|m_2|\leq2\\|m+m_2|\leq\,l}}
 A^2_{n-1,l,m,m_2}(2n+l+\frac{3}{2})^{\alpha}f_{0,2,m_2}g_{n-1,l,m}h_{n,l,m+m_2}
\\
&+\sum_{\substack{2\leq2n+l\leq\,N\\l\geq2}}\sum_{\substack{|m|\leq\,l-2,|m_2|\leq2\\|m+m_2|\leq\,l}} A^3_{n,l-2,m,m_2}(2n+l+\frac{3}{2})^{\alpha}f_{0,2,m_2}g_{n,l-2,m}h_{n,l,m+m_2}\\
&\le \mathbf{B}_1+\mathbf{B}_2+\mathbf{B}_3.
\end{align*}
For the first term $\mathbf{B}_1$, we have
\begin{align*}
\mathbf{B}_1
\leq&\sum_{\substack{2\leq2n+l\leq\,N\\n\geq2}}(2n+l+\frac{3}{2})^{\alpha}\\
&\qquad\times\sum_{|m_2|\leq2}\sum_{\substack{|m|\leq\,l+2\\|m+m_2|\leq\,l}}
|f_{0,2,m_2}|\Big|
A^1_{n-2,l+2,m,m_2}g_{n-2,l+2,m}h_{n,l,m+m_2}\Big|\,,
\end{align*}
by using the Cauchy-Schwarz inequality
\begin{align*}
&\sum_{|m_2|\leq2}\Big(|f_{0,2,m_2}|\sum_{\substack{|m|\leq\,l+2\\|m+m_2|\leq\,l}}\Big|
A^1_{n-2,l+2,m,m_2}g_{n-2,l+2,m}h_{n,l,m+m_2}\Big|\Big)\\
\leq&\|\tilde{\mathbb{S}}_2f\|_{L^2}\Big(\sum_{|m_2|\leq2}\Big(\sum_{\substack{|m|\leq\,l+2\\|m+m_2|\leq\,l}}\big|
A^1_{n-2,l+2,m,m_2}g_{n-2,l+2,m}h_{n,l,m+m_2}\big|\Big)^2\Big)^{\frac{1}{2}}\\
\le &\|\tilde{\mathbb{S}}_2f\|_{L^2}\Big(\sum_{|m|\leq\,l+2}|g_{n-2,l+2,m}|^2\Big)^{\frac{1}{2}}\\
&\quad\times\Big(\sum_{|m_2|\leq2}\sum_{\substack{|m|\leq\,l+2\\|m+m_2|\leq\,l}}\Big|A^1_{n-2,l+2,m,m_2}h_{n,l,m+m_2}\Big|^2\Big)^{\frac{1}{2}}.
\end{align*}
By changing the order of summation
$$
\sum_{\substack{|m_2|\leq2,|m|\leq\,l+2\\|m+m_2|\leq\,l}}=\sum_{|m^*|\leq l}\,\,\sum_{\substack{|m_2|\leq2,|m|\leq\,l+2\\m+m_2=m^*}}
$$
and using \eqref{A1} in Proposition \ref{recur}, we have
\begin{align*}
&\sum_{|m_2|\leq2}\sum_{\substack{|m|\leq\,l+2\\|m+m_2|\leq\,l}}\Big|A^1_{n-2,l+2,m,m_2}h_{n,l,m+m_2}\Big|^2\\
&=\sum_{|m^*|\leq l}\Big|h_{n,l,m^*}\Big|^2\Big(\sum_{\substack{|m|\leq\,l+2, |m_2|\leq2\\m+m_2=m^*}}\Big|A^1_{n-2,l+2,m,m_2}|^2\Big)\\
&\leq\frac{16n(n-1)}{3}\sum_{|m^*|\leq l}\Big|h_{n,l,m^*}\Big|^2
\end{align*}
Substituting back to the estimation of $\mathbf{B}_1$, one can verify that
\begin{align*}
\mathbf{B}_1&\leq\|\tilde{\mathbb{S}}_2f\|_{L^2}
\sum_{\substack{2\leq2n+l\leq\,N\\n\geq2}}(2n+l+\frac{3}{2})^{\alpha}\sqrt{\frac{16n(n-1)}{3}}\\
&\qquad\times\Big(\sum_{|m|\leq\,l+2}|g_{n-2,l+2,m}|^2\Big)^{\frac{1}{2}}
\Big(\sum_{|m^*|\leq\,l} |h_{n,l,m^*}|^2\Big)^{\frac{1}{2}}\\
&\leq\frac{2\sqrt{3}}{3}\|\tilde{\mathbb{S}}_2f\|_{L^2}\Big(\sum_{\substack{2\leq2n+l\leq\,N\\n\geq2}}(2n+l-\frac{1}{2})^{\alpha+1}
\sum_{|m|\leq\,l+2}|g_{n-2,l+2,m}|^2\Big)^{\frac{1}{2}}\\
&\qquad\times\Big(\sum_{\substack{2\leq2n+l\leq\,N\\n\geq2}}(2n+l+\frac{3}{2})^{\alpha+1}
\sum_{|m^*|\leq\,l} |h_{n,l,m^*}|^2\Big)^{\frac{1}{2}}\\
&\leq\frac{2\sqrt{3}}{3}\|\tilde{\mathbb{S}}_2f\|_{L^2}
\|\mathcal{H}^{\frac{\alpha+1}{2}}\tilde{\mathbb{S}}_{N-2} g\|_{L^2}
\|\mathcal{H}^{\frac{\alpha+1}{2}}\tilde{\mathbb{S}}_N h\|_{L^2},
\end{align*}
where we use the estimation  $(2n+l+\frac{3}{2})^{\alpha}(2n-2)\leq(2n+l-\frac{1}{2})^{\alpha+1}$ when $\alpha\le 0$.

Now we turn back to estimate $\mathbf{B}_2,\mathbf{B}_3$.  By using the Cauchy-Schwarz inequality
\begin{align*}
&\sum_{|m_2|\leq2}\Big(|f_{0,2,m_2}|\sum_{\substack{|m|\leq\,l\\|m+m_2|\leq\,l}}\Big|
A^2_{n-1,l,m,m_2}g_{n-1,l,m}h_{n,l,m+m_2}\Big|\Big)\\
\leq&\|\tilde{\mathbb{S}}_2f\|_{L^2}\Big(\sum_{|m|\leq\,l}|g_{n-1,l,m}|^2\Big)^{\frac{1}{2}}\Big(\sum_{\substack{|m|\leq\,l,|m_2|\leq2\\|m+m_2|\leq\,l}}
\Big|A^2_{n-1,l,m,m_2}\Big|^2\Big|h_{n,l,m+m_2}\Big|^2\Big)^{\frac{1}{2}}\\
\le &\|\tilde{\mathbb{S}}_2f\|_{L^2}\Big(\sum_{|m|\leq\,l}|g_{n-1,l,m}|^2\Big)^{\frac{1}{2}}\\
&\qquad\qquad\times\Big(\sum_{|m^*|\leq\,l}\Big|h_{n,l,m^*}\Big|^2\Big(\sum_{\substack{|m|\leq\,l,|m_2|\leq2\\m+m_2=m^*}}
\Big|A^2_{n-1,l,m,m_2}\Big|^2\Big)\Big)^{\frac{1}{2}},
\end{align*}
and
\begin{align*}
&\sum_{|m_2|\leq2}\Big(|f_{0,2,m_2}|
\sum_{\substack{|m|\leq\,l-2\\|m+m_2|\leq\,l }}\Big|
A^3_{n,l-2,m,m_2}g_{n,l-2,m}h_{n,l,m+m_2}\Big|\Big)\\
\leq&\|\tilde{\mathbb{S}}_2f\|_{L^2}\Big(\sum_{|m|\leq\,l-2}|g_{n,l-2,m}|^2\Big)^{\frac{1}{2}}
\Big(\sum_{\substack{|m|\leq\,l-2,|m_2|\leq2\\|m+m_2|\leq\,l }}
\Big|A^3_{n,l-2,m,m_2}h_{n,l,m+m_2}\Big|^2\Big)^{\frac{1}{2}}\\
\le &\|\tilde{\mathbb{S}}_2f\|_{L^2}\Big(\sum_{|m|\leq\,l-2}|g_{n,l-2,m}|^2\Big)^{\frac{1}{2}}\\
&\qquad\qquad\times\Big(\sum_{|m^*|\leq\,l}\Big|h_{n,l,m^*}\Big|^2
\Big(\sum_{\substack{|m|\leq\,l-2,|m_2|\leq2\\m+m_2=m^* }}
\Big|A^3_{n,l-2,m,m_2}\Big|^2\Big)
\Big)^{\frac{1}{2}}.
\end{align*}
Substituting the estimations \eqref{A2} and \eqref{A3} in $\mathbf{B}_2$, $\mathbf{B}_3$, it follows that
\begin{align*}
\mathbf{B}_2&\leq\|\tilde{\mathbb{S}}_2f\|_{L^2}
\sum_{\substack{2\leq2n+l\leq\,N\\n\geq1,l\geq1}}(2n+l+\frac{3}{2})^{\alpha}\sqrt{\frac{4n(2n+2l+1)}{3}}\\
&\qquad\times\Big(\sum_{|m|\leq\,l}|g_{n-1,l,m}|^2\Big)^{\frac{1}{2}}
\Big(\sum_{|m^*|\leq\,l} |h_{n,l,m^*}|^2\Big)^{\frac{1}{2}}\\
&\leq\frac{2\sqrt{3}}{3}\|\tilde{\mathbb{S}}_2f\|_{L^2}
\|\mathcal{H}^{\frac{\alpha+1}{2}}\tilde{\mathbb{S}}_{N-2} g\|_{L^2}
\|\mathcal{H}^{\frac{\alpha+1}{2}}\tilde{\mathbb{S}}_N h\|_{L^2},
\end{align*}
here for $n\ge1$, we use
\begin{align*}
&(2n+l+\frac32)^\alpha (n+l+\frac{1}{2})\leq (2n+l+\frac32)^{\alpha+1};\\
&2n(2n+l+\frac32)^\alpha \le (2n+l-\frac12)^{\alpha+1},\quad \text{for}\,\,l\ge1,\,\alpha\le0.
\end{align*}
And
\begin{align*}
\mathbf{B}_3&\leq\|\tilde{\mathbb{S}}_2f\|_{L^2}
\sum_{\substack{2\leq2n+l\leq\,N\\l\geq2}}(2n+l+\frac{3}{2})^{\alpha}\sqrt{\frac{(2n+2l+1)(2n+2l-1)}{2}}\\
&\qquad\times\left(\sum_{|m|\leq\,l-2}|g_{n,l-2,m}|^2\right)^{\frac{1}{2}}
\left(\sum_{|m^*|\leq\,l} |h_{n,l,m^*}|^2\right)^{\frac{1}{2}}\\
&\leq\sqrt{2}\|\tilde{\mathbb{S}}_2f\|_{L^2}
\|\mathcal{H}^{\frac{\alpha+1}{2}}\tilde{\mathbb{S}}_{N-2} g\|_{L^2}
\|\mathcal{H}^{\frac{\alpha+1}{2}}\tilde{\mathbb{S}}_N h\|_{L^2}\, ,
\end{align*}
here for $l\ge 2$ and $n\in\mathbb{N}$, we use
\begin{align*}
&(2n+l+\frac32)^\alpha(n+l+\frac{1}{2})\leq(2n+l+\frac32)^{\alpha+1};\\
&(2n+l+\frac32)^\alpha(n+l-\frac{1}{2})\leq(2n+l-\frac12)^{\alpha+1}, \quad \text{for}\,\,\alpha\le0.
\end{align*}
Therefore,
\begin{align*}
&|({\bf L}(\tilde{\mathbb{S}}_N f, \, \tilde{\mathbb{S}}_N g),\mathcal{H}^{\alpha}\tilde{\mathbb{S}}_N h)_{L^2}|\\
&\leq \left(\frac{4\sqrt{3}}{3}+\sqrt{2}\right) \|\tilde{\mathbb{S}}_2f\|_{L^2}
\|\mathcal{H}^{\frac{\alpha+1}{2}}\tilde{\mathbb{S}}_{N-2} g\|_{L^2}
\|\mathcal{H}^{\frac{\alpha+1}{2}}\tilde{\mathbb{S}}_N h\|_{L^2}.
\end{align*}
This is the first result of Proposition \ref{estimatetri}.

For the second inequality of the Proposition \ref{estimatetri}, we just to use,
$$
e^{ct (2n+l+\frac{3}{2})}=e^{ct (2(n-2)+(l+2)+\frac{3}{2})}e^{2ct}=e^{ct (2(n-1)+l+\frac{3}{2})}e^{2ct}
=e^{ct (2n+(l-2)+\frac{3}{2})}e^{2ct}.
$$
This ends the proof of Proposition \ref{estimatetri}.
\end{proof}

%%%%%%%%%%%%%%%%%%%%%%%%%%%%%%%
%%%%%%%%%%%%%%%%%%%%%%%%%%%%%%%

\section{The convergence of the formal solution}\label{S4}

In this section, we study the convergence of the solutions $\{g_N; N\in\mathbb{N}\}$ defined by \eqref{gN} in Theorem \ref{th:ODE} where the initial data is the sequence $\{g^0_{n, l, m}=\langle g_0, \varphi_{n, l, m}\rangle; n, l\in\mathbb{N}, |m|\le l\}$ with $g_0\in\,Q^{\alpha}\cap \mathcal{N}^{\perp}$.  Note that  $g_N\in\mathscr{S}(\mathbb{R}^3)\cap\mathcal{N}^{\perp},$
from the definition of \eqref{SN} and \eqref{SN2}, we have
\begin{equation}\label{S}
\mathbb{S}_Ng_N(t)=\tilde{\mathbb{S}}_Ng_N(t)=g_N(t) \in\mathscr{S}(\mathbb{R}^3)\cap\mathcal{N}^{\perp}.
\end{equation}
In particular, for $N=2$, we have
\begin{equation}\label{estimates-2}
\|g_2(t)\|_{L^2(\mathbb{R}^3)}=\|\tilde{\mathbb{S}}_2g_2(t)\|_{L^2(\mathbb{R}^3)}\leq\,e^{-12t}\|\tilde{\mathbb{S}}_2g_0\|_{L^2(\mathbb{R}^3)}.
\end{equation}
Moreover, we recall the result \eqref{L-perp} that
\begin{equation*}
\mathbb{S}_N{\bf L}(g_N(t), g_N(t))=\tilde{\mathbb{S}}_N{\bf L}(g_N(t), g_N(t)).
\end{equation*}
Therefore, we can rewrite the Cauchy problem \eqref{eq-2} as follows:
\begin{equation}\label{eq-3}
\left\{ \begin{aligned}
         &\partial_t g_N+\mathcal{L}(g_N)=\tilde{\mathbb{S}}_{N}{\bf L}(g_N, g_N),\,\\
         &g_{N}|_{t=0}=\sum_{\substack{2\leq2n+l\leq\,N\\n+l\ge2}} \sum_{|m|\le\,l}\langle g_0, \varphi_{n, l, m}\rangle \varphi_{n,l,m}.
\end{aligned} \right.
\end{equation}
Now for $N>2$, $c>0$, taking the inner product of $e^{2ct\mathcal{H}}\mathcal{H}^{\alpha}g_N(t)$ in $L^2(\mathbb{R}^3)$ on both sides of \eqref{eq-3}, we have
\begin{align*}
&\left(\partial_t g_N(t),e^{2ct\mathcal{H}}\mathcal{H}^{\alpha}g_N(t)\right)_{L^2(\mathbb{R}^3)}
+\left(\mathcal{L}g_N(t),e^{2ct\mathcal{H}}\mathcal{H}^{\alpha}g_N(t)\right)_{L^2(\mathbb{R}^3)}\\
&=\left(\tilde{\mathbb{S}}_N\mathbf{L}(g_N,g_N), e^{2ct\mathcal{H}}\mathcal{H}^{\alpha}g_N(t)\right)_{L^2(\mathbb{R}^3)},
\end{align*}
where $g_N$ is defined in \eqref{gN}. Since
\begin{align*}
&\lambda_{0,2}=12>\frac{16}{11}\left(2+\frac{3}{2}\right),\\
&\lambda_{n,l}=2(2n+l)+l(l+1)\geq \frac{16}{11}\left(2n+l+\frac{3}{2}\right), \quad \forall \,2n+l>2.
\end{align*}
The orthogonality of the basis $\{\varphi_{n,l,m}\}_{\{n,l\in \mathbb{N},\,m\in\mathbb{Z},|m|\leq\,l\}}$ imply that
$$
\Big(\mathcal{L}g_N(t),e^{2ct\mathcal{H}}\mathcal{H}^{\alpha} g_N(t)\Big)_{L^2(\mathbb{R}^3)}\geq
\frac{16}{11}\|e^{ct \mathcal{H}}\mathcal{H}^{\frac{\alpha+1}{2}}g_N(t)\|^2_{L^2(\mathbb{R}^3)}\,.
$$
On the other hand
\begin{align*}
&2\left(\partial_t g_N(t),e^{2ct\mathcal{H}}\mathcal{H}^{\alpha}g_N(t)\right)_{L^2(\mathbb{R}^3)}+2 c\left(g_N (t), \mathcal{H}\, e^{2ct\mathcal{H}}\mathcal{H}^{\alpha}g_N(t)\right)_{L^2(\mathbb{R}^3)}\\
&=\frac{d}{dt}\left(g_N(t), e^{2ct\mathcal{H}}\mathcal{H}^{\alpha}g_N(t)\right)_{L^2(\mathbb{R}^3)}=\frac{d}{dt}\|e^{ct \mathcal{H}}\mathcal{H}^{\frac{\alpha}{2}}g_N(t)\|^2_{L^2(\mathbb{R}^3)}\,.
\end{align*}
Therefore, we have
\begin{align*}
&\frac{1}{2}\frac{d}{dt}\|e^{ct \mathcal{H}}\mathcal{H}^{\frac{\alpha}{2}}g_N(t)\|^2_{L^2(\mathbb{R}^3)}
+\left(\frac{16}{11}- c\right)\|e^{ct \mathcal{H}}\mathcal{H}^{\frac{\alpha+1}{2}}g_N(t)\|^2_{L^2(\mathbb{R}^3)}\\
&= \Big(\mathbf{L}(\tilde{\mathbb{S}}_Ng_N,\tilde{\mathbb{S}}_Ng_N), e^{2ct\mathcal{H}}\mathcal{H}^{\alpha}\tilde{\mathbb{S}}_Ng_N(t)\Big)_{L^2}.
\end{align*}
It follows from Proposition \ref{estimatetri} and the inequality \eqref{estimates-2}  that, for any $N> 2$, $t> 0$,
\begin{align}\label{ele}
&\frac{1}{2}\frac{d}{dt}\|e^{ct\mathcal{H}}g_N(t)\|^2_{Q^\alpha(\mathbb{R}^3)}
+\left(\frac{16}{11}- c\right)\|e^{ct\mathcal{H}}g_N(t)\|^2_{Q^{\alpha+1}(\mathbb{R}^3)}\nonumber\\
\leq& \left(\frac{4\sqrt{3}}{3}+\sqrt{2}\right) e^{-(12-c)t}\|\tilde{\mathbb{S}}_2g_0\|_{L^2(\mathbb{R}^3)}
\|e^{ct\mathcal{H}}g_{N-2}\|_{Q^{\alpha+1}(\mathbb{R}^3)}
\|e^{ct\mathcal{H}}g_N\|_{Q^{\alpha+1}(\mathbb{R}^3)}\nonumber\\
\leq& \left(\frac{4\sqrt{3}}{3}+\sqrt{2}\right) e^{-(12-c)t}\|\tilde{\mathbb{S}}_2g_0\|_{L^2(\mathbb{R}^3)}
\|e^{ct\mathcal{H}}g_N\|^2_{Q^{\alpha+1}(\mathbb{R}^3)}
\end{align}
where we used the definition of the shubin spaces $Q^{\alpha+1}(\mathbb{R}^3)$ that
$$\|e^{ct\mathcal{H}}g_N\|^2_{Q^{\alpha+1}(\mathbb{R}^3)}
=\sum_{\substack{2\leq2n+l\leq\,N\\n+l\geq2}}\sum_{|m|\leq\,l}
e^{2ct(2n+l+\frac{3}{2})}(2n+l+\frac{3}{2})^{\alpha+1}
|g_{n,l,m}|^2.$$

\begin{proposition}\label{induction}
There exists $c_0>0, c_1>0$ such that for all $g_0\in Q^{\alpha}(\mathbb{R}^3) \cap \mathcal{N}^{\perp} $ with $\alpha\leq0$, and
$$
\|\tilde{\mathbb{S}}_2g_0\|_{L^2(\mathbb{R}^3)}
=\left(\sum_{|m_2|\leq2}|\langle g_0, \varphi_{0,2,m_2}\rangle|^2\right)^{\frac{1}{2}}\leq c_0\, ,
$$
if $\{g_{n,l,m}(t); n,l\in\mathbb{N},\,m\in\mathbb{Z},\,|m|\leq\,l\}$ is the solution of \eqref{ODE-1} with inital datum $\{g^0_{n,l,m}=\langle g_0, \varphi _{n, l, m}\rangle; n,l\in\mathbb{N},\,|m|\leq\,l\}$, then, for any $N\ge 2, t>0$,
\begin{align}\label{est-1}
\|e^{c_1t\mathcal{H}}g_N(t)\|^2_{Q^\alpha(\mathbb{R}^3)}+c_1\int^t_0
\|e^{c_1\tau\mathcal{H}}g_N(\tau )\|^2_{Q^{\alpha+1}(\mathbb{R}^3)}d\tau
\le \|g_0\|^2_{Q^{\alpha}(\mathbb{R}^3)}.
\end{align}
We have also, for any $t\geq0$ and any $N\geq 2$,
\begin{equation}\label{est-2}
\|\tilde{\mathbb{S}}_N {\bf L}(g_N(t), g_N(t))\|_{Q^{\alpha-2}(\mathbb{R}^3)}\le 2\|g_0\|_{Q^{\alpha}(\mathbb{R}^3)}.
\end{equation}
\end{proposition}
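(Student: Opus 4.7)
The plan is to use the energy inequality \eqref{ele} together with a Grönwall-type argument for \eqref{est-1}, and then to adapt the almost-diagonal calculation underlying Proposition \ref{estimatetri} to a different weight in order to obtain \eqref{est-2}. A single smallness constant $c_0$, chosen as the minimum of the two constraints produced below, will serve both parts.

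For \eqref{est-1}, I would first fix $c_1\in(0,16/11)$ and then $c_0>0$ small enough that
$$
2\Bigl(\tfrac{16}{11}-c_1-\bigl(\tfrac{4\sqrt 3}{3}+\sqrt 2\bigr)c_0\Bigr)\ge c_1.
$$
Such a pair exists because the left-hand side tends to $32/11>0$ as $(c_1,c_0)\to(0,0)$. The hypothesis $\|\tilde{\mathbb{S}}_2 g_0\|_{L^2}\le c_0$ together with the crude bound $e^{-(12-c_1)t}\le 1$ reduces \eqref{ele} to the differential inequality
$$
\frac{d}{dt}\|e^{c_1 t\mathcal{H}}g_N(t)\|^2_{Q^{\alpha}(\mathbb{R}^3)}+c_1\|e^{c_1 t\mathcal{H}}g_N(t)\|^2_{Q^{\alpha+1}(\mathbb{R}^3)}\le 0.
$$
Integrating on $[0,t]$ and noting that $g_N(0)$ is a finite truncation of the spectral expansion of $g_0$, so $\|g_N(0)\|_{Q^\alpha}\le\|g_0\|_{Q^\alpha}$, gives \eqref{est-1}.

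For \eqref{est-2}, I would reprise the proof of Proposition \ref{estimatetri} but weight the coefficient $\bigl(\mathbf{L}(g_N,g_N),\varphi_{n,l,m}\bigr)_{L^2}$ by $(2n+l+\tfrac{3}{2})^{\alpha-2}$ instead of $(2n+l+\tfrac{3}{2})^{\alpha}$. By \eqref{Ln} together with \eqref{N} and \eqref{N2}, this coefficient is a sum of three pieces, each containing exactly one factor $e^{-12t}g^{0}_{0,2,m_2}$ and one of $g_{n-2,l+2,\cdot}(t)$, $g_{n-1,l,\cdot}(t)$, $g_{n,l-2,\cdot}(t)$; the multipliers $A^1,A^2,A^3$ satisfy $|A^i|^2\le C(2n+l+\tfrac{3}{2})^{2}$ by Proposition \ref{recur}. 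The factor $(2n+l+\tfrac32)^{\alpha-2}\cdot (2n+l+\tfrac32)^{2}=(2n+l+\tfrac32)^{\alpha}$ thus exactly cancels the quadratic weight coming from $|A^i|^2$. After Cauchy--Schwarz in $m_2$ and relabeling the shifted indices as in Proposition \ref{estimatetri} (using $\alpha\le 0$), I expect
$$
\|\tilde{\mathbb{S}}_N\mathbf{L}(g_N(t),g_N(t))\|_{Q^{\alpha-2}(\mathbb{R}^3)}\le C\|\tilde{\mathbb{S}}_2 g_0\|_{L^2}\,\|g_N(t)\|_{Q^{\alpha}(\mathbb{R}^3)}\le Cc_0\|g_0\|_{Q^\alpha(\mathbb{R}^3)},
$$
where the last inequality uses \eqref{est-1} with the harmless lower bound $\|e^{c_1 t\mathcal H}\cdot\|\ge\|\cdot\|$. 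Shrinking $c_0$ if necessary so that $Cc_0\le 2$ (compatibly with the earlier constraint) yields \eqref{est-2}.

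The main obstacle is pure bookkeeping in the second step: verifying that the shifts of $(n,l)$ by at most two units caused by $\mathbf{L}$ do not spoil the weighted estimate, i.e.\ that $(2n+l+\tfrac32)^{\alpha}\le C(2n'+l'+\tfrac32)^{\alpha}$ for the shifted indices $(n',l')\in\{(n-2,l+2),(n-1,l),(n,l-2)\}$ when $\alpha\le 0$. Once these elementary monotonicity inequalities are assembled exactly as in Proposition \ref{estimatetri}, the bound closes and Proposition \ref{induction} follows.
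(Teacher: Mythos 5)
Your treatment of \eqref{est-1} is essentially the paper's: you use the energy inequality \eqref{ele} with $c=c_1$, the crude bound $e^{-(12-c_1)t}\le 1$, the hypothesis $\|\tilde{\mathbb{S}}_2 g_0\|_{L^2}\le c_0$, and your constraint $2\bigl(\tfrac{16}{11}-c_1-(\tfrac{4\sqrt 3}{3}+\sqrt 2)c_0\bigr)\ge c_1$ is exactly the choice recorded in Remark \ref{c-0-1}. The only cosmetic difference is that the paper handles $N=2$ by an explicit computation based on Proposition \ref{SSN2}, while your uniform argument also covers it, since for $N=2$ the right-hand side of \eqref{ele} vanishes ($\tilde{\mathbb{S}}_{N-2}=\tilde{\mathbb{S}}_0=0$ in Proposition \ref{estimatetri}, or directly because $\mathbf{L}(\varphi_{0,2,m_2},\varphi_{0,2,m})$ is orthogonal to the $\varphi_{0,2,\cdot}$). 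For \eqref{est-2} you take a mildly different route: you re-run the weighted coefficient computation directly on the $Q^{\alpha-2}$ norm, using \eqref{Ln}, the bounds on $\sum|A^i|^2$ from Proposition \ref{recur} (each $\lesssim (2n+l+\tfrac32)^2$), and the monotonicity $(2n+l+\tfrac32)^{\alpha}\le(2n+l-\tfrac12)^{\alpha}$ for $\alpha\le 0$, whereas the paper argues by duality, writing $\langle\tilde{\mathbb{S}}_N\mathbf{L}(g_N,g_N),h\rangle=(\mathbf{L}(\tilde{\mathbb{S}}_Ng_N,\tilde{\mathbb{S}}_Ng_N),\mathcal{H}^{\alpha-1}\tilde{\mathbb{S}}_N\mathcal{H}^{1-\alpha}h)_{L^2}$ and invoking the already-proved Proposition \ref{estimatetri} with $\alpha$ replaced by $\alpha-1$. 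The two are equivalent in substance, resting on the same almost-diagonal expansion and coefficient bounds; the duality shortcut buys the explicit constant $(\tfrac{4\sqrt 3}{3}+\sqrt 2)c_0\le\tfrac{16}{11}\le 2$ with no further shrinking of $c_0$, and if you track the sharp constants of Proposition \ref{recur} your unspecified $C$ is again $\tfrac{4\sqrt 3}{3}+\sqrt 2$, so the extra step of shrinking $c_0$ so that $Cc_0\le 2$ is unnecessary, though harmless since the proposition only asserts the existence of some $c_0>0$.
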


\begin{remark}\label{c-0-1}
It is enough to take $0<c_1<<1$ very small  such that
$$
0<c_0=\frac{\frac{16}{11}-\frac 32 c_1}{\frac{4\sqrt{3}}{3}+\sqrt{2}}<\frac{\frac{16}{11}}{\frac{4\sqrt{3}}{3}+\sqrt{2}}\approx0.39.
$$
\end{remark}

\begin{proof}
For $N=2$,  it follows from Proposition \ref{SSN2} and $\alpha\le 0$ that
\begin{align*}
&\|e^{c_1t\mathcal{H}}\mathcal{H}^{\frac{\alpha}{2}}g_2(t)\|^2_{L^2(\mathbb{R}^3)}+
c_1\int^t_0
\|e^{c_1\tau\mathcal{H}}\mathcal{H}^{\frac{\alpha+1}{2}}g_2(\tau )\|^2_{L^2(\mathbb{R}^3)}d\tau\\
&=\frac{48-21c_1}{48-14c_1}e^{7c_1t-24t}\sum_{|m|\leq2}\Big(\frac{7}{2}\Big)^{\alpha}|g^0_{0,2,m}|^2\\
&\leq\sum_{|m|\leq2}\Big(\frac{7}{2}\Big)^{\alpha}|g^0_{0,2,m}|^2\leq\|g_0\|^2_{Q^{\alpha}(\mathbb{R}^3)}.
\end{align*}
Then for $N>2$, we can deduce from \eqref{ele} with $c=c_1$ and the hypothesis $\|\tilde{\mathbb{S}}_2g_0\|_{L^2(\mathbb{R}^3)}\le c_0 $  that
$$
\frac{d}{dt}\|e^{c_1 t\mathcal{H}}\mathcal{H}^{\frac{\alpha}{2}}g_N(t)\|^2_{L^2(\mathbb{R}^3)}
+c_1
\|e^{c_1t\mathcal{H}}g_N(t)\|^2_{Q^{\alpha+1}(\mathbb{R}^3)}\leq0.
$$
This ends the proof of \eqref{est-1} by integration over $[0, t]$.

Now we prove the estimate \eqref{est-2}.
For $h\in Q^{-\alpha+2}(\mathbb{R}^3)$ with $\alpha\leq0$, by  using the first inequality in the Proposition \ref{estimatetri} with $\alpha$ replacing by $\alpha-1$ where $\alpha-1\leq-1<0$, and the notation of \eqref{S}
\begin{align*}%\label{tri-1}
&|\langle\tilde{\mathbb{S}}_N\mathbf{L}(g_N, \, g_N), h\rangle|\\
=&|(\mathbf{L}(\tilde{\mathbb{S}}_N g_N, \, \tilde{\mathbb{S}}_N g_N),\mathcal{H}^{\alpha-1}\tilde{\mathbb{S}}_N \mathcal{H}^{1-\alpha} h)_{L^2(\mathbb{R}^3)}|
\\
\le & \Big(\frac{4\sqrt{3}}{3}+\sqrt{2}\Big) \|\tilde{\mathbb{S}}_2g_N\|_{L^2(\mathbb{R}^3)}
\|\mathcal{H}^{\frac{\alpha}{2}}\tilde{\mathbb{S}}_{N-2} g_N\|_{L^2(\mathbb{R}^3)}
\|\tilde{\mathbb{S}}_N h\|_{Q^{-\alpha+2}(\mathbb{R}^3)}\\
\le & \Big(\frac{4\sqrt{3}}{3}+\sqrt{2}\Big) \|g_2\|_{L^2(\mathbb{R}^3)}
\|\mathcal{H}^{\frac{\alpha}{2}}g_{N-2}\|_{L^2(\mathbb{R}^3)}
\| h\|_{Q^{-\alpha+2}(\mathbb{R}^3)}.
\end{align*}
Then by the definition of the norm and \eqref{est-1}, we have
\begin{align*}
&\|\tilde{\mathbb{S}}_N{\bf L}(g_N(t), g_N(t))\|_{Q^{\alpha-2}(\mathbb{R}^3)}\\
=&\sup_{\|h\|_{Q^{2-\alpha}(\mathbb{R}^3)}=1}|\langle\mathbb{S}_N\mathbf{L}( g_N, \, g_N),h\rangle|\\
\leq& \Big(\frac{4\sqrt{3}}{3}+\sqrt{2}\Big)  \|\tilde{\mathbb{S}}_2g_0\|_{L^2}
\|\mathcal{H}^{\frac{\alpha}{2}}g_{N-2}\|_{L^2(\mathbb{R}^3)}\leq \frac{16}{11}\|\mathcal{H}^{\frac{\alpha}{2}}g_0\|_{L^2(\mathbb{R}^3)}
\end{align*}
which ends the proof of the Proposition \ref{induction}.
\end{proof}

In particulary, we get the following surprise results
\begin{corollary}\label{cor1}
For any $f, g \in Q^{\alpha}(\mathbb{R}^3)\cap\mathcal{N}^{\perp}$ with $\alpha\leq0$, we have
$$%\begin{equation}\label{tri-E}
{\bf L}(f,\, g)\in Q^{\alpha-2}(\mathbb{R}^3)\cap\mathcal{N}^{\perp}	
$$%\end{equation}
and
\begin{equation*}
\|{\bf L}(f, \, g)\|_{Q^{\alpha-2}(\mathbb{R}^3)}\le \Big(\frac{4\sqrt{3}}{3}+\sqrt{2}\Big) \|\tilde{\mathbb{S}}_2f\|_{L^2(\mathbb{R}^3)}
\|g\|_{Q^{\alpha}(\mathbb{R}^3)}.
\end{equation*}
\end{corollary}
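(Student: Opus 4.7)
The plan is to mimic the derivation of the estimate \eqref{est-2} inside the proof of Proposition \ref{induction}, simply removing the cutoff $\tilde{\mathbb{S}}_N$ at the outermost level. The key ingredients are the first trilinear estimate of Proposition \ref{estimatetri} combined with a duality argument between the Shubin spaces $Q^{\alpha-2}(\mathbb{R}^3)$ and $Q^{2-\alpha}(\mathbb{R}^3)$.

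First, I would define $\mathbf{L}(f,g)$ as a formal sum $\sum_{n,l,m} c_{n,l,m}\,\varphi_{n,l,m}$ through the expansion \eqref{Ln}. Because $f\in\mathcal{N}^{\perp}$, the contributions involving $f_{0,0,0}$, $f_{0,1,m_1}$, $f_{1,0,0}$ vanish, and each coefficient $c_{n,l,m}$ is a finite linear combination (with the $A^1,A^2,A^3$ weights) of products $f_{0,2,m_2}\,g_{n',l',m'}$ with $2n'+l'\le 2n+l+2$; it is therefore well defined for any $f,g\in Q^{\alpha}(\mathbb{R}^3)\cap\mathcal{N}^{\perp}$. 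Moreover, none of the five indices $(0,0,0), (0,1,0), (0,1,\pm 1), (1,0,0)$ can appear on the left-hand side of \eqref{Ln}, so $\mathbf{L}(f,g)\in\mathcal{N}^{\perp}$; this is the unapproximated version of \eqref{L-perp}.

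For the norm bound I use duality, writing
$$
\|\mathbf{L}(f,g)\|_{Q^{\alpha-2}(\mathbb{R}^3)}=\sup_{\|h\|_{Q^{2-\alpha}(\mathbb{R}^3)}=1}\big|\langle \mathbf{L}(f,g),\,h\rangle_{L^2}\big|.
$$
Given such an $h$, I set $\tilde h = \mathcal{H}^{1-\alpha}h$, so that $\mathcal{H}^{\alpha-1}\tilde h = h$ and the identity $\|\mathcal{H}^{\alpha/2}\tilde h\|_{L^2}=\|h\|_{Q^{2-\alpha}}=1$ holds. Working with the truncations $\tilde{\mathbb{S}}_N f,\tilde{\mathbb{S}}_N g,\tilde{\mathbb{S}}_N\tilde h$, which all lie in $\mathcal{S}(\mathbb{R}^3)\cap\mathcal{N}^{\perp}$, I apply the first inequality of Proposition \ref{estimatetri} with $\alpha$ replaced by $\alpha-1\le -1\le 0$, obtaining
$$
\big|\big(\mathbf{L}(\tilde{\mathbb{S}}_N f,\tilde{\mathbb{S}}_N g),\,\mathcal{H}^{\alpha-1}\tilde{\mathbb{S}}_N\tilde h\big)_{L^2}\big|\le \Big(\tfrac{4\sqrt 3}{3}+\sqrt 2\Big)\|\tilde{\mathbb{S}}_2 f\|_{L^2}\,\|\mathcal{H}^{\alpha/2}\tilde{\mathbb{S}}_{N-2}g\|_{L^2}\,\|\mathcal{H}^{\alpha/2}\tilde{\mathbb{S}}_N\tilde h\|_{L^2},
$$
in which the two rightmost factors are uniformly dominated by $\|g\|_{Q^{\alpha}}$ and $\|h\|_{Q^{2-\alpha}}=1$.

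To finish I let $N\to\infty$. For every fixed $(n,l,m)$, the coefficient $(\mathbf{L}(\tilde{\mathbb{S}}_N f,\tilde{\mathbb{S}}_N g),\varphi_{n,l,m})_{L^2}$ stabilizes to $(\mathbf{L}(f,g),\varphi_{n,l,m})_{L^2}$ once $N\ge 2n+l+2$, thanks to the finiteness of the sum in \eqref{Ln}, and $\tilde{\mathbb{S}}_N\tilde h\to\tilde h$ in $Q^{\alpha}$; hence the left-hand side of the displayed inequality converges to $\langle \mathbf{L}(f,g), h\rangle_{L^2}$. Taking the supremum over $\|h\|_{Q^{2-\alpha}}=1$ then yields the announced estimate with the prescribed constant. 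The only genuinely delicate point is verifying that the index shift $\alpha\mapsto\alpha-1$ in Proposition \ref{estimatetri} is compensated exactly by the dual weight $\mathcal{H}^{1-\alpha}$, so that the bound depends on $h$ only through $\|h\|_{Q^{2-\alpha}}$; the remainder is simply passage to the limit in a uniform estimate.
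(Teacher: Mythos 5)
Your proposal is correct and follows essentially the same route as the paper, which obtains the Corollary as a by-product of the duality computation in the proof of Proposition \ref{induction}: pair with $h\in Q^{2-\alpha}(\mathbb{R}^3)$, write $h=\mathcal{H}^{\alpha-1}\tilde h$ with $\tilde h=\mathcal{H}^{1-\alpha}h$, apply the first inequality of Proposition \ref{estimatetri} with $\alpha$ replaced by $\alpha-1$, and pass to the limit $N\to\infty$ using the stabilization of the coefficients in \eqref{Ln} together with the uniform bound (your added remarks on the coefficient-wise definition of ${\bf L}(f,g)$ and on \eqref{L-perp} are exactly the implicit ingredients of the paper's one-line deduction). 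The only cosmetic point is that, before knowing ${\bf L}(f,g)\in Q^{\alpha-2}(\mathbb{R}^3)$, it is cleaner to take the supremum over $h$ supported on finitely many modes first and then let $N\to\infty$ by monotone convergence, but this is a reordering of your argument, not a gap.
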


\smallskip
\noindent
{\bf Convergence in Shubin space.}\,\, We prove now the convergence of the sequence
$$
g_N(t)\rightarrow
\,g(t)=\sum^{+\infty}_{k=2}\sum_{\substack{2n+l=k\\n+l\ge2}} \sum_{|m|\le\,l}g_{n,l,m}(t)\varphi_{n,l,m}
$$
where for all $N\geq2$, $g_N$ was defined in \eqref{gN} with the coefficients  $\{g_{n,l,m}(t)\}$ defined in \eqref{ODE-1},  
the inital datum is $\{g^0_{n,l,m}=\langle g_0, \varphi _{n, l, m}\rangle; n,l\in\mathbb{N},\,|m|\leq\,l\}$ with $g_0\in Q^\alpha(\mathbb{R}^3)
\cap\mathcal{N}^{\perp}$.
By Proposition \ref{induction} and the orthogonality of the basis $\{\varphi_{n,l,m}\}$,
\begin{align*}
\sum_{\substack{2\leq2n+l\leq\,N\\n+l\ge2}} \sum_{|m|\le\,l}
e^{2c_1t(2n+l+\frac{3}{2})}(2n+l+\frac{3}{2})^{\alpha}|g_{n,l,m}(t)|^2
\le\|g_0\|^2_{Q^{\alpha}(\mathbb{R}^3)}.
\end{align*}
It follows that for all $t\geq0$,
\begin{align*}%\label{Lestimate}
&\|g_{N}(t)\|^2_{Q^{\alpha}(\mathbb{R}^3)}=\|\mathcal{H}^{\frac{\alpha}{2}}g_{N}(t)\|^2_{L^2(\mathbb{R}^3)}\notag\\
&=
\sum_{\substack{2\leq2n+l\leq\,N\\n+l\ge2}} \sum_{|m|\le\,l}
(2n+l+\frac{3}{2})^{\alpha}|g_{n,l,m}(t)|^2
\le\|g_0\|^2_{Q^{\alpha}(\mathbb{R}^3)}.
\end{align*}
By using the monotone convergence theorem, we have
$$
g_{N}\,\, \to\,\, g(t)\,\in\, Q^{\alpha}(\mathbb{R}^3).
$$
Moreover, for any $T> 0$,
\begin{equation*}%\label{convergence1}
\lim_{N\to \infty}\|g_{N}-g\|_{L^\infty([0, T]; Q^{\alpha}(\mathbb{R}^3))}=0\, .
\end{equation*}
On the other hand, using \eqref{est-2} and Corollary \ref{cor1} , we have also
\begin{equation*}%\label{Gamma}
\tilde{\mathbb{S}}_N{\bf L}(g_{N}, g_{N})
\,\,\to\,\, {\bf L}(g,  g),
\end{equation*}
in $Q^{\alpha-2}(\mathbb{R}^3)$.

\smallskip
We recall the definition of weak solution of \eqref{eq-1}:

\begin{definition}
Let $g_0\in \mathcal{S}'(\mathbb{R}^3)$,  $g(t, v)$ is called a weak solution of the Cauchy problem \eqref{eq-1} if it satisfies the following conditions:
\begin{align*}
&g\in C^0([0, +\infty[; \mathcal{S}'(\mathbb{R}^3)), \quad g(0, v)=g_0(v),\\
&\mathcal{L} ( g )\in L^2([0,  T[; \, \mathcal{S}'(\mathbb{R}^3)),\quad
{\bf L} (g , g)\in L^2([0,  T[; \mathcal{S}'(\mathbb{R}^3)),\quad\forall T>0,\\
&\langle g(t), \phi(t)\rangle-\langle g_0, \phi(0)\rangle+\int^t_{0}\langle\mathcal{L}g(\tau), \phi(\tau)\rangle d\tau\\
&\qquad\qquad=\int^t_{0}\langle\,g(\tau), \partial_{\tau}\phi(\tau)\rangle\,d\tau +\int^t_{0}\langle{\bf L}(g(\tau),g(\tau)), \phi(\tau)\rangle d\tau,\quad \forall t\ge 0,	
\end{align*}
For any $\phi(t)\in\,C^1\big([0, +\infty[; \mathscr{S}(\mathbb{R}^3)\big)$.
\end{definition}

We prove now the main Theorem \ref{trick}.

\smallskip\noindent
{\bf Existence of weak solution.}

Let $\{ g_{n,l,m}, n,l\in\mathbb{N}, n+l\ge 2, |m|\leq\,l\}$ be the solution of the infinite system \eqref{ODE-1}  in Theorem  \ref{th:ODE}  with the initial datum give in the Proposition \ref{induction}, then for any $N\ge 2$, $g_N$ satisfy the equation \eqref{eq-3}.

We have, firstly, from the Proposition \ref{induction},  there exists positive constant $C>0$, for any $N\ge 2$ and any $T>0$,
\begin{align*}
&\|g_{N}\|_{L^\infty([0, T];  Q^{\alpha}(\mathbb{R}^3))}
\le \|g_0\|_{Q^{\alpha}(\mathbb{R}^3)}\, ,\\
&\|\mathcal{L} ( g_{N} )\|_{L^2([0, T];  Q^{\alpha-3}(\mathbb{R}^3))}
\le C\|g_0\|_{Q^{\alpha}(\mathbb{R}^3)}\, ,\\
&\|\tilde{\mathbb{S}}_{N}{\bf L}(g_{N}, g_{N})\|_{L^2([0, T];  Q^{\alpha-2}(\mathbb{R}^3))}
\le C\|g_0\|_{Q^{\alpha}(\mathbb{R}^3)}.	
\end{align*}
So that  the equation \eqref{eq-3} implies that the sequence $\{\frac{d}{dt} \tilde{\mathbb{S}}_N g(t)\}$ is uniformly bounded  in $  Q^{\alpha-3}(\mathbb{R}^3)$ with respect to $N\in\mathbb{N}$ and $t\in [0, T]$.  The Arzel$\grave{a}$-Ascoli Theorem implies that
$$
g_{N} \,\, \to\,\, g\in C^0([0, +\infty[;   Q^{\alpha-2}(\mathbb{R}^3))
 \subset C^0([0, +\infty[; \mathcal{S}'(\mathbb{R}^3)),
$$
and
$$
g(0)=g_0.
$$
Secondly,  for any $\phi(t)\in\,C^1\Big(\mathbb{R}_+,\mathscr{S}(\mathbb{R}^3)\Big)$, the Cauchy problem \eqref{eq-3} can be rewrite as follows
\begin{align*}
&\langle g_{N}(t), \phi(t)\rangle
-\langle g_{N}(0), \phi(0)\rangle-\int^t_{0}\langle\,g_{N}(\tau), \partial_{\tau}\phi(\tau)\rangle\,d\tau \\
&=-\int^t_{0}\langle\mathcal{L}g_{N}(\tau), \phi(\tau)\rangle d\tau+\int^t_{0}\langle\tilde{\mathbb{S}}_{N}{\bf L}(g_{N}(\tau),g_{N}(\tau)), \phi(\tau)\rangle d\tau
\end{align*}
Let $N\rightarrow+\infty$, we conclude that,
\begin{align*}
&\langle g(t), \phi(t)\rangle-\langle g_0, \phi(0)\rangle-\int^t_{0}\langle\,g(\tau), \partial_{\tau}\phi(\tau)\rangle\,d\tau \\
&=-\int^t_{0}\langle\mathcal{L}(g(\tau)), \phi(\tau)\rangle d\tau+\int^t_{0}\langle{\bf L}(g(\tau),g(\tau)), \phi(\tau)\rangle d\tau,
\end{align*}
which shows $g\in L^\infty([0, +\infty[; Q^{\alpha}(\mathbb{R}^3))$ is a global weak solution of Cauchy problem \eqref{eq-1}.

\smallskip\noindent
{\bf Regularity of the solution.}\,\, For $N\ge 2$,
we deduce from the formulas  \eqref{est-1} and the orthogonality of the basis $(\varphi_{n, l, m})$ that
\begin{align*}
\|e^{c_1t\mathcal{H}}\mathcal{H}^{\frac{\alpha}{2}}g_{N}(t)\|_{L^2(\mathbb{R}^3)}\leq
\|\mathcal{H}^{\frac{\alpha}{2}}g_0\|_{L^2(\mathbb{R}^3)},\quad\forall\, N\ge 2,\,\,\,\, t\ge 0,
\end{align*}
by using the monotone convergence theorem, we conclude that, such that
\begin{equation*}
\|e^{c_1 t\mathcal{H}}\mathcal{H}^{\frac{\alpha}{2}}g(t)\|_{L^2(\mathbb{R}^3)}\leq
\|g_0\|_{Q^{\alpha}(\mathbb{R}^3)},\quad \forall\,\,t\ge 0.
\end{equation*}
The proof of Theorem \ref{trick} is completed.

%%%%%%%%%%%%%%%%%%%%%%%%%%%%%%
%%%%%%%%%%%%%%%%%%%%%%%%%%%%%%

\section{The techincal computations}\label{S5}

The proof of the main technic part was presented in this section.  More precisely, we prepare to prove Proposition \ref{expansion} in Section 2 and Proposition \ref{recur} in Section 3.

To this ends, we need to state some Lemmas and new notations. Recall firstly
\begin{equation}\label{relation2}
v_kv_j\sqrt{\mu}\in\text{span}\left\{\varphi_{0,2,0},\,\varphi_{0,2,\pm1},\,\varphi_{0,2,\pm2},\,\varphi_{1,0,0},\,
\varphi_{0,0,0}\right\}.
\end{equation}
This relation is important in the expansion of the nonlinear operators.

Setting
\begin{equation*}
\Psi_{n,l,m}(v)=\sqrt{\mu}(v)\varphi_{n,l,m}(v).
\end{equation*}
Recalled Lemma 7.2 of \cite{GLX} that the Fourier transformation is ,
\begin{equation}\label{fourier}
\widehat{\Psi_{n,l,m}}(\xi)=B_{n,l}|\xi|^{2n+l}e^{-\frac{|\xi|^2}{2}}Y^m_l(\frac{\xi}{|\xi|}),
\end{equation}
where
\begin{align}\label{B_NL}
B_{n,l}=(-i)^l(2\pi)^{\frac{3}{4}}
\left(\frac{1}{\sqrt{2}n!\Gamma(n+l+\frac{3}{2})2^{2n+l}}\right)^{\frac{1}{2}}.
\end{align}

For the Laplace-Beltrami operator on the unit sphere $\mathbb{S}^2$, see also Section 4.2 in \cite{NYKC2}, we have
\begin{equation}\label{Laplace-Beltrami}
\sum_{\substack{1\leq\,k,j\leq3\\k\neq\,j}}\left(v^2_j\partial^2_{v_k}-v_kv_j\partial_{v_j}\partial_{v_k}\right)
-2\sum^3_{k=1}v_k\partial_{v_k}=\Delta_{\mathbb{S}^2}.
\end{equation}
And for $n,l\in\mathbb{N}$, $m\in\mathbb{Z}$, $|m|\leq\,l$,
 \begin{equation}\label{partial_2}
\left[\sum^3_{k=1}\partial^2_{v_k}+v_k\partial_{v_k}\right]\Psi_{n,l,m}=-(2n+l+3)\Psi_{n,l,m}.
\end{equation}

Recall that the family $\Big(Y^m_l(\sigma)\Big)_{l\geq0,|m|\leq\,l}$ is the orthonormal basis of
$L^2(\mathbb{S}^2,\,d\sigma)$ (see $(16)$ of Chap.1 in \cite{Jones}). We have the following addition lemma,

\begin{lemma}\label{harmonic}
For any $\omega\in\mathbb{S}^2$, $l,\tilde{l}\in\mathbb{N}$, $|m|\leq\,l$, $|\tilde{m}|\leq\,\tilde{l}$,
$$Y^m_l(\omega)Y^{\tilde{m}}_{\tilde{l}}(\omega)
=\sum_{0\leq\,p\leq\min(l,\tilde{l})}\left(\int_{\mathbb{S}^2}Y^m_l(\omega)
Y^{\tilde{m}}_{\tilde{l}}(\omega)Y^{-m-\tilde{m}}_{l+\tilde{l}-2p}(\omega)d\omega\right)Y^{m+\tilde{m}}_{l+\tilde{l}-2p}(\omega)$$
where we always define $Y^{-m-\tilde{m}}_{l+\tilde{l}-2p}(\omega)\equiv0$, if $|m+\tilde{m}|>l+\tilde{l}-2p.$
\end{lemma}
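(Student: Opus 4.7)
The plan is to exploit the fact that $\{Y^M_L\}_{L\ge 0,\,|M|\le L}$ is a complete orthonormal basis of $L^2(\mathbb{S}^2,d\sigma)$, so the smooth product $Y^m_l\,Y^{\tilde m}_{\tilde l}$ admits the $L^2$-expansion
\[
Y^m_l(\omega)\, Y^{\tilde m}_{\tilde l}(\omega) \;=\; \sum_{L\ge 0}\sum_{|M|\le L} a^{L,M}_{l,m,\tilde l,\tilde m}\; Y^M_L(\omega),
\]
with Fourier coefficients
\[
a^{L,M}_{l,m,\tilde l,\tilde m} \;=\; \int_{\mathbb{S}^2} Y^m_l\, Y^{\tilde m}_{\tilde l}\, \overline{Y^M_L}\, d\omega \;=\; \int_{\mathbb{S}^2} Y^m_l\, Y^{\tilde m}_{\tilde l}\, Y^{-M}_L\, d\omega,
\]
where the second equality uses the conjugation identity $\overline{Y^M_L}=Y^{-M}_L$ already recorded in the excerpt. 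The task is then to identify the surviving index pairs $(L,M)$ and check they match the finite range advertised in the statement.

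First, I would apply the azimuthal selection rule. Substituting the explicit form $Y^m_l(\theta,\phi)=N_{l,m}P^{|m|}_l(\cos\theta)e^{im\phi}$, the $\phi$-integration yields
\[
\int_0^{2\pi} e^{i(m+\tilde m-M)\phi}\,d\phi = 2\pi\,\delta_{M,\,m+\tilde m},
\]
so only the single value $M=m+\tilde m$ contributes. Combined with the convention $Y^{-m-\tilde m}_L\equiv 0$ whenever $|m+\tilde m|>L$, this accounts for the outer factor $Y^{m+\tilde m}_{l+\tilde l-2p}$ in the stated formula.

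Second, I would carry out the degree-and-parity reduction. Each $Y^M_L$ is the restriction to $\mathbb{S}^2$ of a homogeneous harmonic polynomial of degree $L$ on $\mathbb{R}^3$, so $Y^m_l\,Y^{\tilde m}_{\tilde l}$, being the restriction of a homogeneous polynomial of total degree $l+\tilde l$, lies in the finite-dimensional span of $\{Y^M_L:L\le l+\tilde l\}$; hence the expansion truncates. Furthermore, the antipodal identity $Y^m_l(-\omega)=(-1)^l Y^m_l(\omega)$ shows that the product has parity $(-1)^{l+\tilde l}$, forcing any surviving $L$ to satisfy $L\equiv l+\tilde l\pmod 2$, i.e.\ $L=l+\tilde l-2p$ for some integer $p\ge 0$.

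Third, I would invoke the lower bound $p\le\min(l,\tilde l)$, equivalent to $L\ge|l-\tilde l|$. This is the classical Clebsch-Gordan selection rule for the decomposition $V_l\otimes V_{\tilde l}\cong\bigoplus_{L=|l-\tilde l|}^{l+\tilde l}V_L$ of irreducible $SO(3)$-representations, which can equivalently be obtained by a direct dimension count on spaces of spherical harmonics. Among the three steps, this last one is the genuine obstacle: the azimuthal constraint and the parity constraint are immediate from the explicit formula for $Y^m_l$, whereas the cutoff $p\le\min(l,\tilde l)$ truly uses the representation theory of $SO(3)$. Assembling the three selection rules then restricts the double sum over $(L,M)$ to the single sum over $0\le p\le\min(l,\tilde l)$ at $M=m+\tilde m$, completing the proof.
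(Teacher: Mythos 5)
Your proof is correct, but it takes a genuinely different route from the paper, which does not prove the lemma internally at all: it simply cites (86) in Chap.~3 of \cite{Jones} and Gaunt's formula (13-12) in Chap.~13 of \cite{JCSlater}, i.e.\ the classical explicit evaluation of the triple integrals $\int_{\mathbb{S}^2}Y^m_l Y^{\tilde m}_{\tilde l}Y^{-m-\tilde m}_{L}\,d\omega$, from which the linearization formula with the stated finite range is read off. You instead argue structurally: since $\{Y^M_L\}$ is an orthonormal basis and $\overline{Y^M_L}=Y^{-M}_L$ in the paper's convention, the Fourier coefficients of the product are exactly the triple integrals appearing in the statement, and you then truncate the expansion by three selection rules --- $M=m+\tilde m$ from the $\phi$-integration; $L\le l+\tilde l$ together with $L\equiv l+\tilde l \pmod 2$ because the product is the restriction of a homogeneous polynomial of degree $l+\tilde l$ (equivalently by antipodal parity applied to the coefficient integral), which also shows the expansion is a finite sum and hence a pointwise identity; and $L\ge|l-\tilde l|$, i.e.\ $p\le\min(l,\tilde l)$, from the Clebsch--Gordan series for $V_l\otimes V_{\tilde l}$ combined with the $SO(3)$-equivariance of pointwise multiplication. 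This suffices because the lemma only identifies the coefficients as those integrals and bounds the range of $L$, never needing their closed-form values; indeed the paper itself extracts the quantitative information later (Proposition \ref{recur}) not from Gaunt's formula but from the addition theorem and Lemma \ref{recurrence}. The citation approach buys explicit coefficients with no representation theory; yours buys a self-contained argument modulo the standard Clebsch--Gordan decomposition. One small caution: your aside that the cutoff $p\le\min(l,\tilde l)$ can ``equivalently be obtained by a direct dimension count'' is too glib --- the identity $(2l+1)(2\tilde l+1)=\sum_{L=|l-\tilde l|}^{l+\tilde l}(2L+1)$ is only a consistency check, and the vanishing of the coefficients with $L<|l-\tilde l|$ genuinely rests on the equivariance/Schur (or weight-multiplicity) argument, so that should be stated as the justification.
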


\begin{proof}
For the proof of Lemma \ref{harmonic}, we refer to (86) in Chap.\,3 of \cite{Jones} or  Gaunt's formula from (13-12) of Chap.13 in \cite{JCSlater}.
\end{proof}

In particular, for $l=1$ or $l=2$ in Lemma \ref{harmonic}, we have

\begin{corollary}\label{harmonic-c}
For all $\omega\in\mathbb{S}^2$, $l\in\mathbb{N}$, $|m|\leq\,l$, $|m_1|\leq\,1$, $|m_2|\leq\,2$,
\begin{align*}
&Y^{m_1}_1(\omega)Y^{m}_{l}(\omega)
=\sum_{0\leq\,p\leq\min(1,l)}\tilde{C}^{m_1,m}_{l,l+1-2p}
Y^{m_1+m}_{l+1-2p}(\omega);\\
&Y^{m_2}_2(\omega)Y^{m}_{l}(\omega)
=\sum_{0\leq\,p\leq\min(2,l)}C^{m_2,m}_{l,l+2-2p}
Y^{m_2+m}_{l+2-2p}(\omega)
\end{align*}
where
$$
\tilde{C}^{m_1,m}_{l,l+1-2p}=\int_{\mathbb{S}^2}Y^{m_1}_1(\omega)
Y^{m}_{l}(\omega)Y^{-m_1-m}_{l+1-2p}(\omega)d\omega,
$$
$$
C^{m_2,m}_{l,l+2-2p}=\int_{\mathbb{S}^2}Y^{m_2}_2(\omega)
Y^{m}_{l}(\omega)Y^{-m_2-m}_{l+2-2p}(\omega)d\omega.
$$
More explicitly, for any $l\in\mathbb{N}, |m|\leq\,l$
$$
Y^{m_1}_1(\omega)Y^{m}_{l}(\omega)=\tilde{C}^{m_1,m}_{l,l+1}
Y^{m_1+m}_{l+1}(\omega)+\tilde{C}^{m_1,m}_{l,l-1}
Y^{m_1+m}_{l-1}(\omega),
$$
and
\begin{align*}
&Y^{m_2}_2(\omega)Y^{m}_{l}(\omega)\\
&=C^{m_2,m}_{l,l+2}Y^{m_2+m}_{l+2}(\omega)
+C^{m_2,m}_{l,l}Y^{m_2+m}_{l}(\omega)
+C^{m_2,m}_{l,l-2}
Y^{m_2+m}_{l-2}(\omega)
\end{align*}
where, for convenience, we note
$$
Y^{m_1+m}_{-1}(\omega)=0,\quad\,Y^{m_2+m}_{-2}(\omega)=0.
$$
\end{corollary}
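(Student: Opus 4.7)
The final statement to prove is Corollary \ref{harmonic-c}, which specializes the Gaunt-type expansion of Lemma \ref{harmonic} to the cases $\tilde{l}=1$ and $\tilde{l}=2$. My plan is to read off the two formulas as direct applications of the general addition formula, with careful attention only to the degenerate situations in which the general sum has fewer than the maximal number of terms.

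First, I would apply Lemma \ref{harmonic} with $\tilde{l}=1$ and $\tilde{m}=m_1$. Since the summation index $p$ ranges over $0\le p\le \min(l,1)$, there are at most two values, $p=0$ and $p=1$, producing the terms $Y^{m_1+m}_{l+1}$ and $Y^{m_1+m}_{l-1}$ respectively, with the displayed integral expressions for $\tilde{C}^{m_1,m}_{l,l+1}$ and $\tilde{C}^{m_1,m}_{l,l-1}$. When $l=0$, only $p=0$ contributes, so the second term is absent from the sum given by Lemma \ref{harmonic}; but with the convention $Y^{m_1+m}_{-1}\equiv 0$ the two-term formula remains valid as written. A similar step with $\tilde{l}=2$ and $\tilde{m}=m_2$ yields three possible values $p=0,1,2$, corresponding to $Y^{m_2+m}_{l+2}$, $Y^{m_2+m}_{l}$, $Y^{m_2+m}_{l-2}$; for $l=0$ or $l=1$ the missing terms are again absorbed by the convention $Y^{m_2+m}_{-2}\equiv 0$ (and by the standing convention that $Y^{\mu}_{\ell}\equiv 0$ whenever $|\mu|>\ell$, which in particular covers $Y^{m_2+m}_0$ when $|m_2+m|\ge1$, etc.).

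The only point that requires a brief verification is internal consistency of the conventions on the spherical harmonics of negative or too-small degree. Specifically, whenever a formal term $Y^{m'+\tilde m'}_{l'}$ appears on the right-hand side with $|m'+\tilde m'|>l'$ or $l'<0$, one must check that the corresponding coefficient is unambiguously zero. This follows because the integral $\int_{\mathbb{S}^2}Y^{m_1}_1 Y^m_l Y^{-m_1-m}_{l'}\,d\sigma$ (respectively $\int_{\mathbb{S}^2}Y^{m_2}_2 Y^m_l Y^{-m_2-m}_{l'}\,d\sigma$) is defined by the same convention to vanish, so the two-term and three-term formulas of the corollary remain literal identities in the Hilbert space $L^2(\mathbb{S}^2)$.

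I do not anticipate any real obstacle: the corollary is a direct specialization of Lemma \ref{harmonic}, and the only care needed is in the book-keeping for small $l$. If one wanted to be completely explicit, one could also remark that $\tilde{C}^{m_1,m}_{l,l-1}$ vanishes automatically when $l=0$, and analogously for the coefficients $C^{m_2,m}_{l,l}$ and $C^{m_2,m}_{l,l-2}$ in the degenerate cases, so no special treatment of small $l$ is needed in practice.
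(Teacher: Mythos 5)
Your proposal is correct and follows essentially the same route as the paper, which obtains the corollary as an immediate specialization of Lemma \ref{harmonic} to $\tilde{l}=1$ (with $\tilde m=m_1$) and $\tilde{l}=2$ (with $\tilde m=m_2$). Your additional bookkeeping for the small-$l$ degenerate cases, using the conventions $Y^{\mu}_{\ell}\equiv0$ for $\ell<0$ or $|\mu|>\ell$ and noting that the corresponding coefficients vanish, is exactly what makes the two- and three-term formulas literal identities and is consistent with the paper's stated conventions.
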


\begin{lemma}\label{addition}
Let $\Psi_{n,l,m}=\sqrt{\mu}\varphi_{n,l,m}$, then for $v\in\mathbb{R}^3$, we have\\
1) For $|m_1|\leq1$,
 \begin{equation}\label{orth-1}
\int_{\mathbb{R}^3_{v_*}}(v\cdot\,v_*)\Psi_{0,1,m_1}(v_*)dv_*=\sqrt{\frac{4\pi}{3}}|v|Y^{m_1}_1(\sigma).
\end{equation}
2) For $|m_2|\leq2$,
\begin{equation}\label{orth-2}
\int_{\mathbb{R}^3_{v_*}}(v\cdot\,v_*)^2\Psi_{0,2,m_2}(v_*)dv_*=\sqrt{\frac{16\pi}{15}}|v|^2Y^{m_2}_2(\sigma).
\end{equation}
3) and
\begin{equation}\label{orth-3}
\int_{\mathbb{R}^3_{v_*}}(v\cdot\,v_*)^2\Psi_{1,0,0}(v_*)dv_*=-\frac{\sqrt{6}}{3}|v|^2\,.
\end{equation}
\end{lemma}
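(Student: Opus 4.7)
My plan is to use a single mechanism for all three identities: the Legendre addition theorem to decompose the kernels $v\cdot v_*$ and $(v\cdot v_*)^2$ into finite bilinear sums of spherical harmonics in $\sigma_v$ and $\sigma_{v_*}$. Once this is done, the $v_*$-integral (in spherical coordinates $dv_* = r^2\,dr\,d\sigma_{v_*}$) will factor into an angular part handled by $L^2(\mathbb{S}^2)$-orthonormality of $\{Y_l^m\}$ and a one-dimensional radial Gaussian moment. The angular orthogonality will automatically select the unique spherical-harmonic component of $\Psi_{n,l,m}(v_*)$ whose indices match those appearing in the kernel expansion, so all the spherical bookkeeping is trivial.

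For (1), I would apply the degree-one addition formula to write
\[
v\cdot v_* = \frac{4\pi}{3}|v||v_*|\sum_{|m'|\le 1}Y_1^{m'}(\sigma_v)Y_1^{-m'}(\sigma_{v_*}),
\]
using $\overline{Y_l^m} = Y_l^{-m}$. Since $\Psi_{0,1,m_1}$ is proportional to $r\,e^{-r^2/2}Y_1^{m_1}(\sigma_{v_*})$, orthonormality on $\mathbb{S}^2$ will leave only the $m' = m_1$ term, reducing the expression to $\tfrac{4\pi}{3}|v|Y_1^{m_1}(\sigma_v)$ times the radial moment $\int_0^\infty r^4 e^{-r^2/2}\,dr$. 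Collecting this moment with the normalization constants of $\varphi_{0,1,m_1}$ and $\sqrt{\mu}$ should collapse the prefactor $4\pi/3$ to the stated $\sqrt{4\pi/3}$.

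For (2) and (3), I will use $\cos^2\theta = \tfrac{1}{3}+\tfrac{2}{3}P_2(\cos\theta)$ together with the addition theorem applied to $P_0$ and $P_2$, giving
\[
(v\cdot v_*)^2 = \frac{|v|^2|v_*|^2}{3} + \frac{8\pi|v|^2|v_*|^2}{15}\sum_{|m'|\le 2}Y_2^{m'}(\sigma_v)Y_2^{-m'}(\sigma_{v_*}).
\]
Against $\Psi_{0,2,m_2}$ (angular type $Y_2^{m_2}$), only the $m' = m_2$ term of the second sum will survive while the first piece vanishes by orthogonality; the surviving radial moment $\int_0^\infty r^6 e^{-r^2/2}\,dr$, combined with the normalization of $\varphi_{0,2,m_2}$, should produce $\sqrt{16\pi/15}\,|v|^2 Y_2^{m_2}(\sigma_v)$. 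Against $\Psi_{1,0,0}$, which is purely radial and hence has angular type $Y_0^0$, only the first (constant-angular) piece contributes, and the radial integral becomes $\int_0^\infty r^4(\tfrac{3}{2}-\tfrac{r^2}{2})e^{-r^2/2}\,dr$; this will be \emph{negative} because $\int r^6 e^{-r^2/2} > 3\int r^4 e^{-r^2/2}$, which is precisely the source of the minus sign in~\eqref{orth-3}.

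The one thing that will require real care is the arithmetic bookkeeping: the factors $4\pi/(2l+1)$ from the addition theorem, the explicit constants $B_{n,l}$ inside $\varphi_{n,l,m}$, the combination $\sqrt{\mu}\,\varphi_{n,l,m}$ producing the clean $e^{-r^2/2}$ inside $\Psi_{n,l,m}$, and the Gaussian moments $\int_0^\infty r^{2n}e^{-r^2/2}\,dr = 2^{n-1/2}\Gamma(n+1/2)$ must combine into the compact final constants $\sqrt{4\pi/3}$, $\sqrt{16\pi/15}$, and $-\sqrt{6}/3$. No conceptual difficulty is expected beyond this, but in (3) the cancellation producing the negative sign is the one point where a dropped factor would be fatal and so deserves the most attention.
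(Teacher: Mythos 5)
Your proposal is correct and follows essentially the same route as the paper: both expand $\sigma\cdot\sigma_*$ and $(\sigma\cdot\sigma_*)^2=\frac13+\frac23 P_2(\sigma\cdot\sigma_*)$ via the spherical-harmonic addition theorem and then kill all but one term by orthogonality, with the constants (including the negative sign in the third identity) coming from the explicit normalization of $\varphi_{0,1,m_1}$, $\varphi_{0,2,m_2}$, $\varphi_{1,0,0}$ and the radial Gaussian moments. The only cosmetic difference is that the paper phrases the last step as orthogonality of the eigenfunctions in $L^2(\mathbb{R}^3)$ rather than splitting explicitly into angular orthonormality times a radial integral, which is the same computation.
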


\begin{proof}
Set $\sigma*=\frac{v_*}{|v_*|}, \sigma=\frac{v}{|v|}\in\mathbb{S}^2$, then
\begin{align*}
&\int_{\mathbb{R}^3_{v_*}}(v\cdot\,v_*)\Psi_{0,1,m_1}(v_*)dv_*=\int_{\mathbb{R}^3_{v_*}}|v||v_*|(\sigma\cdot\,\sigma*)\Psi_{0,1,m_1}(v_*)dv_*\\
&\int_{\mathbb{R}^3_{v_*}}(v\cdot\,v_*)^2\Psi_{0,2,m_2}(v_*)dv_*=\int_{\mathbb{R}^3_{v_*}}|v|^2|v_*|^2(\sigma\cdot\,\sigma*)^2\Psi_{0,2,m_2}(v_*)dv_*\\
&\int_{\mathbb{R}^3_{v_*}}(v\cdot\,v_*)^2\Psi_{1,0,0}(v_*)dv_*=\int_{\mathbb{R}^3_{v_*}}|v|^2|v_*|^2(\sigma\cdot\,\sigma*)^2\Psi_{1,0,0}(v_*)dv_*
\end{align*}
By using the formulas $(5_3)$ in Sec.1, Chap. III of \cite{San} that
\begin{align*}
P_1(x)=x,\quad P_2(x)=\frac{3}{2}x^2-\frac{1}{2}.
 \end{align*}
We apply the addition theorem of spherical harmonics $(7-34)$ in Chapter 7 of \cite{JCSlater} (see also (VII) in Sec.19, Chapter III of \cite{San}) that,
$$P_k(\sigma_*\cdot\sigma)=\frac{4\pi}{2k+1}\sum_{|m_k|\leq\,k}Y^{-m_k}_k(\sigma_*)Y^{m_k}_k(\sigma).$$
Then
\begin{align*}
&\sigma_*\cdot\sigma=P_1(\sigma_*\cdot\sigma)=\frac{4\pi}{3}\sum_{|\tilde{m}_1|\leq1}Y^{-\tilde{m}_1}_1(\sigma_*)Y^{\tilde{m}_1}_1(\sigma);\\
&(\sigma_*\cdot\sigma)^2=\frac{2}{3}P_2(\sigma_*\cdot\sigma)+\frac{1}{3}=\frac{8\pi}{15}\sum_{|\tilde{m}_2|\leq2}Y^{-\tilde{m}_2}_2(\sigma_*)Y^{\tilde{m}_2}_2(\sigma)+\frac{1}{3}.
\end{align*}
Substituting this expansion into the integral and using the orthogonal of the eigenfunctions $\varphi_{n,l,m}$ in $L^2(\mathbb{R}^3)$, one has
\begin{align*}
&\int_{\mathbb{R}^3_{v_*}}|v||v_*|(\sigma\cdot\,\sigma*)\Psi_{0,1,m_1}(v_*)dv_*=\sqrt{\frac{4\pi}{3}}\, |v|\, Y^{m_1}_1(\sigma)\\
&\int_{\mathbb{R}^3_{v_*}}|v|^2|v_*|^2(\sigma\cdot\,\sigma*)^2\Psi_{0,2,m_2}(v_*)dv_*=\sqrt{\frac{16\pi}{15}}\, |v|^2Y^{m_2}_2(\sigma)\\
&\int_{\mathbb{R}^3_{v_*}}(v\cdot\,v_*)^2\Psi_{1,0,0}(v_*)dv_*=\frac{1}{3}\left(\int_{\mathbb{R}^3_{v_*}}|v_*|^2\Psi_{1,0,0}(v_*)dv_*\right)|v|^2
=-\frac{\sqrt{6}}{3}\, |v|^2,
\end{align*}
where we used the explicit formula of $\varphi_{0,1,m_1}$,  $\varphi_{1,0,0}$ and $\varphi_{0,2,m_2}$ in \eqref{S22} in section \ref{S2}.
This ends the proof of Lemma \ref{addition}.
\end{proof}

We prove now the 5 parts of the  Proposition of \ref{expansion} by the following 5 Lemmas
\begin{lemma}\label{lemma5.1+}
For $n,l\in\mathbb{N}$, $|m|\leq\,l$, we have
\begin{align*}
{\bf L}(\varphi_{0,0,0},\varphi_{n,l,m})=-\left(2(2n+l)+l(l+1)\right)\varphi_{n,l,m}.
\end{align*}
\end{lemma}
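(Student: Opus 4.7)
Since $\varphi_{0,0,0}=\sqrt{\mu}$, the definition of the bilinear operator reduces to
\[
{\bf L}(\varphi_{0,0,0},\varphi_{n,l,m})=\mu^{-\frac12}Q_L(\mu,\Psi_{n,l,m}),
\]
where $\Psi_{n,l,m}=\sqrt{\mu}\,\varphi_{n,l,m}$. The plan is to compute $Q_L(\mu,h)$ explicitly as a second-order differential operator on $v\in\mathbb{R}^3$, and then apply the already-known eigen-identities for $\Psi_{n,l,m}$.

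First I would pass the divergence inside the $v_*$-integral and write, using $\nabla_v\mu(v_*)=-v_*\mu(v_*)$,
\[
Q_L(\mu,h)(v)=\partial_{v_j}\bigl[\bar a_{ij}(v)\,\partial_{v_i}h(v)+\bar b_j(v)\,h(v)\bigr],
\]
where $\bar a_{ij}(v)=\int a_{ij}(v-v_*)\mu(v_*)\,dv_*$ and $\bar b_j(v)=\sum_i\int a_{ij}(v-v_*)v_{*i}\mu(v_*)\,dv_*$. Using only the Gaussian moments $\int\mu=1$, $\int v_*\mu=0$, $\int v_{*i}v_{*j}\mu=\delta_{ij}$, and $\int v_{*i}|v_*|^2\mu=0$, a short direct computation gives
\[
\bar a_{ij}(v)=(|v|^2+2)\delta_{ij}-v_iv_j,\qquad \bar b_j(v)=2v_j,
\]
and expanding the divergence cancels all first-order terms:
\[
Q_L(\mu,h)(v)=(|v|^2+2)\Delta h-\sum_{i,j}v_iv_j\,\partial_i\partial_j h+6h.
\]

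Next I would convert the quadratic-in-$v$ differential operator into the Laplace-Beltrami operator. The identity \eqref{Laplace-Beltrami} is exactly
\[
|v|^2\Delta -\sum_{i,j}v_iv_j\partial_i\partial_j=\Delta_{\mathbb{S}^2}+2\,v\cdot\nabla,
\]
so
\[
Q_L(\mu,h)=2(\Delta h+v\cdot\nabla h)+\Delta_{\mathbb{S}^2}h+6h.
\]
Applying this to $h=\Psi_{n,l,m}$ and using \eqref{partial_2}, which gives $(\Delta+v\cdot\nabla)\Psi_{n,l,m}=-(2n+l+3)\Psi_{n,l,m}$, together with $\Delta_{\mathbb{S}^2}\Psi_{n,l,m}=-l(l+1)\Psi_{n,l,m}$ (since $\Delta_{\mathbb{S}^2}$ acts only on the spherical factor $Y^m_l$), we obtain
\[
Q_L(\mu,\Psi_{n,l,m})=\bigl[-2(2n+l+3)+6-l(l+1)\bigr]\Psi_{n,l,m}=-\bigl(2(2n+l)+l(l+1)\bigr)\Psi_{n,l,m}.
\]
Dividing by $\sqrt{\mu}$ yields the claim.

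The only mildly delicate point is the bookkeeping in the computation of $\bar a_{ij}$ and $\bar b_j$ and the cancellation of all $v\cdot\nabla h$ terms in the divergence expansion; everything else is an immediate application of the identities \eqref{Laplace-Beltrami} and \eqref{partial_2} already recalled in the paper.
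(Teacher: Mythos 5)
Your proposal is correct and follows essentially the same route as the paper: both reduce $Q_L(\mu,\cdot)$, via Gaussian moment computations, to the explicit operator $2(\Delta_v+v\cdot\nabla_v)+\Delta_{\mathbb{S}^2}+6$ and then conclude with the identities \eqref{Laplace-Beltrami}, \eqref{partial_2} and $\Delta_{\mathbb{S}^2}\Psi_{n,l,m}=-l(l+1)\Psi_{n,l,m}$. The only difference is organizational — you package the moments into the convolved coefficients $\bar a_{ij}=(|v|^2+2)\delta_{ij}-v_iv_j$, $\bar b_j=2v_j$ and observe the cancellation of first-order terms, whereas the paper computes the same moments term by term (via Fourier transform at $\xi=0$) — and your coefficient values and cancellations all check out.
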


\begin{proof}
Since $\varphi_{0,0,0}=\sqrt{\mu},$
then for all $n,l\in\mathbb{N},|m|\le\,l$, one has
\begin{align*}
&{\bf L}(\varphi_{0,0,0},\, \varphi_{n,l,m})
=\frac{1}{\sqrt{\mu(v)}}Q(\mu,\Psi_{n,l,m})\\
=&\frac{1}{\sqrt{\mu(v)}}\sum_{1\leq\,k,j\leq3}\partial_{v_k}\int_{\mathbb{R}^{3}}
a_{k,j}(v-v_*)\left[\mu(v_*)
\partial_{v_j}\Psi_{n,l,m}(v)
-\partial_{v^*_j}\mu(v_*)\Psi_{n,l,m}(v)\right]
dv_*
\end{align*}
where
\begin{align*}
a_{k,k}(v-v_*)&=\sum_{\substack{1\leq\,j\leq3\\j\neq\,k}}(v_j-v_j^*)^2;\\
a_{k,j}(v-v_*)&=-(v_k-v_k^*)(v_j-v_j^*)\,\text{when}\,\,k\neq\,j.
\end{align*}
It follows that
$${\bf L}(\varphi_{0,0,0},\varphi_{n,l,m})={\bf L}_1(\varphi_{0,0,0},\varphi_{n,l,m})-{\bf L}_2(\varphi_{0,0,0},\varphi_{n,l,m})$$
where
\begin{align*}
&{\bf L}_1(\varphi_{0,0,0},\varphi_{n,l,m})\\
&=
\frac{1}{\sqrt{\mu(v)}}\sum_{\substack{1\leq\,k,j\leq3\\k\neq\,j}}
\partial_{v_k}\left(\int_{\mathbb{R}^{3}}\left(v^2_j-2v_jv^*_j+(v_j^*)^2\right)
\mu(v_*)dv_*\partial_{v_k}\Psi_{n,l,m}(v)\right)\\
&\quad-\frac{1}{\sqrt{\mu(v)}}\sum_{\substack{1\leq\,k,j\leq3\\k\neq\,j}}
\partial_{v_k}\left(\int_{\mathbb{R}^{3}}\left(v^2_j-2v_jv^*_j+(v_j^*)^2\right)
\partial_{v^*_k}\mu(v_*)dv_*
\Psi_{n,l,m}(v)\right);\\
&{\bf L}_2(\varphi_{0,0,0},\varphi_{n,l,m})\\
&=
\frac{1}{\sqrt{\mu(v)}}\sum_{\substack{1\leq\,k,j\leq3\\k\neq\,j}}
\partial_{v_k}\left(\int_{\mathbb{R}^{3}}\left(v_kv_j-v_k^*v_j-v_kv_j^*+v_k^*v_j^*\right)
\mu(v_*)dv_*
\partial_{v_j}\Psi_{n,l,m}(v)\right)\\
&\,-\frac{1}{\sqrt{\mu(v)}}\sum_{\substack{1\leq\,k,j\leq3\\k\neq\,j}}
\partial_{v_k}\left(\int_{\mathbb{R}^{3}}\left(v_kv_j-v_k^*v_j-v_kv_j^*+v_k^*v_j^*\right)
\partial_{v^*_j}\mu(v_*)dv_*
\Psi_{n,l,m}(v)\right).
\end{align*}
Then by using \eqref{fourier}, we have,
\begin{align*}
&\int_{\mathbb{R}^{3}}
\partial_{v^*_k}\mu(v_*)dv_*
=\left.i\xi_k\widehat{\mu}(\xi)\right|_{\xi=0}=\left.i\xi_k\widehat{\Psi_{0,0,0}}(\xi)\right|_{\xi=0}=0,\quad\,k=1,2,3;\\
&\int_{\mathbb{R}^{3}}
v^*_j\partial_{v^*_k}\mu(v_*)dv_*
=-\left.\xi_k\partial_{\xi^*_j}\widehat{\Psi_{0,0,0}}(\xi)\right|_{\xi=0}=0,\quad\,k\neq\,j;\\
&\int_{\mathbb{R}^{3}}
(v^*_j)^2\partial_{v^*_k}\mu(v_*)dv_*
=-\left.i\xi_k\partial^2_{\xi^*_j}\widehat{\Psi_{0,0,0}}(\xi)\right|_{\xi=0}=0,\quad\,k\neq\,j;\\
&\int_{\mathbb{R}^{3}}
v^*_j\partial_{v^*_j}\mu(v_*)dv_*
=\left.-\widehat{\Psi_{0,0,0}}(\xi)
-\xi_j\partial_{\xi^*_j}\widehat{\Psi_{0,0,0}}(\xi)\right|_{\xi=0}=-1;\\
&\int_{\mathbb{R}^{3}}
v^*_kv^*_j\partial_{v^*_j}\mu(v_*)dv_*
=-\int_{\mathbb{R}^{3}}
v^*_k\mu(v_*)dv_*=0.
\end{align*}
It is obviously that
\begin{align*}
&\int_{\mathbb{R}^{3}}\mu(v_*)dv_*
=(\varphi_{0,0,0},\varphi_{0,0,0})_{L^2(\mathbb{R}^3)}=1,\\
&\int_{\mathbb{R}^{3}}v^*_k\mu(v_*)dv_*=0,\\
&\int_{\mathbb{R}^{3}}v^*_kv^*_j\mu(v_*)dv_*=0,\quad\text{when}\,k\neq\,j.
\end{align*}
Then we can deduce that
\begin{align*}
&{\bf L}(\varphi_{0,0,0},\varphi_{n,l,m})={\bf L}_1(\varphi_{0,0,0},\varphi_{n,l,m})-{\bf L}_2(\varphi_{0,0,0},\varphi_{n,l,m})\\
&=\frac{1}{\sqrt{\mu(v)}}
\left[\sum_{\substack{1\leq\,k,j\leq3\\k\neq\,j}}
\left(v^2_j\partial^2_{v_k}-v_kv_j\partial_{v_j}\partial_{v_k}\right)+6\right]
\Psi_{n,l,m}(v)\\
&\quad+\frac{1}{\sqrt{\mu(v)}}\sum_{\substack{1\leq\,k,j\leq3\\k\neq\,j}}
\left(\int_{\mathbb{R}^{3}}(v_j^*)^2
\mu(v_*)dv_*\right)\partial^2_{v_k}\Psi_{n,l,m}(v)\\
&=\frac{1}{\sqrt{\mu(v)}}
\left[\sum_{\substack{1\leq\,k,j\leq3\\k\neq\,j}}
\left(v^2_j\partial^2_{v_k}-v_kv_j\partial_{v_j}\partial_{v_k}\right)+6\right]
\Psi_{n,l,m}(v)\\
&\quad+\frac{1}{\sqrt{\mu(v)}}
\left(\frac{2}{3}\int_{\mathbb{R}^{3}}|v_*|^2
\mu(v_*)dv_*\right)\sum^3_{k=1}\partial^2_{v_k}\Psi_{n,l,m}(v)\\
&=\frac{1}{\sqrt{\mu(v)}}
\left[\sum_{\substack{1\leq\,k,j\leq3\\k\neq\,j}}
\left(v^2_j\partial^2_{v_k}-v_kv_j\partial_{v_j}\partial_{v_k}\right)+6+2\Delta_v\right]
\Psi_{n,l,m}(v)
\end{align*}
By using \eqref{Laplace-Beltrami} and \eqref{partial_2} that,
\begin{align*}
&\left[\sum_{\substack{1\leq\,k,j\leq3\\k\neq\,j}}
\left(v^2_j\partial^2_{v_k}-v_kv_j\partial_{v_j}\partial_{v_k}\right)+6+2\Delta_v\right]
\Psi_{n,l,m}(v)\\
&=\left[2\sum^3_{k=1}v_k\partial_{v_k}+6+2\Delta_v\right]
\Psi_{n,l,m}(v)+\Delta_{\mathbb{S}^2}\Psi_{n,l,m}(v)\\
&=-2(2n+l)\Psi_{n,l,m}(v)-l(l+1)\Psi_{n,l,m}(v)
\end{align*}
where we used the Laplace-Beltrami operator property
$$
\Delta_{\mathbb{S}^2}\Psi_{n,l,m}=-l(l+1)\Psi_{n,l,m}.
$$
Therefore, we conclude with the notation $\Psi_{n,l,m}=\sqrt{\mu}\varphi_{n,l,m}$ that
$$
{\bf L}(\varphi_{0,0,0},\varphi_{n,l,m})=-\left(2(2n+l)+l(l+1)\right)\varphi_{n,l,m}.
$$
We end the proof of Lemma \ref{lemma5.1+} .
\end{proof}

\begin{lemma}\label{lemma5.2+}
For $n,l\in\mathbb{N}$, $|m|\leq\,l$, we have
\begin{align*}
&{\bf L}(\varphi_{0,1,m_1},\varphi_{n,l,m})\\
=&A^-_{n,l,m,m_1}\varphi_{n+1,l-1,m_1+m}+A^+_{n,l,m,m_1}\varphi_{n,l+1,m_1+m}, \,\forall\,|m_1|\leq1
\end{align*}
with
\begin{equation*}%\label{A+-}
\begin{split}
&A^-_{n,l,m,m_1}=4\sqrt{\frac{\pi}{3}}(l-1)\sqrt{2(n+1)}
\tilde{C}^{m_1,m}_{l,l-1};\\
&A^+_{n,l,m,m_1}=4\sqrt{\frac{\pi}{3}}(l+2)\sqrt{2n+2l+3}
\tilde{C}^{m_1,m}_{l,l+1}
\end{split}
\end{equation*}
where $\tilde{C}^{m_1,m}_{l,l-1}, \tilde{C}^{m_1,m}_{l,l+1}$
were defined in the Corollary \ref{harmonic-c}.
\end{lemma}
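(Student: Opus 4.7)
I would follow the strategy used in Lemma \ref{lemma5.1+}, namely a direct computation of
$$\mathbf{L}(\varphi_{0,1,m_1},\varphi_{n,l,m})=\mu^{-1/2}Q_L(\Psi_{0,1,m_1},\Psi_{n,l,m}),$$
expanding each entry $a_{k,j}(v-v_*)$ of the Landau matrix into polynomials in $v$ and $v_*$ separately. This reduces the collision integral to a linear combination of moments of $\Psi_{0,1,m_1}(v_*)$ (and of $\partial_{v_j^*}\Psi_{0,1,m_1}(v_*)$, which is handled by an integration by parts in $v_*$) multiplied by derivatives in $v$ of $\Psi_{n,l,m}(v)$. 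The output can then be written as a differential operator in $v$ acting on $\Psi_{n,l,m}$, divided by $\sqrt{\mu}$.

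The key observation is that, since $\Psi_{0,1,m_1}$ is a linear function in $v_*$ times the Gaussian $\mu$, all its even-order moments vanish by parity: in particular
$$\int\Psi_{0,1,m_1}\,dv_*=0,\qquad \int v_j^*v_k^*\,\Psi_{0,1,m_1}\,dv_*=0.$$
Only the first and third moments survive. The first-moment contribution is captured by Lemma \ref{addition}(1),
$$\int(v\cdot v_*)\,\Psi_{0,1,m_1}(v_*)\,dv_*=\sqrt{\tfrac{4\pi}{3}}\,|v|\,Y_1^{m_1}(\sigma),$$
while the third-moment piece reduces to the Gaussian fourth-moment tensor. After these substitutions, using the eigenfunction identity \eqref{partial_2} for $\Psi_{n,l,m}$ and the Laplace-Beltrami formula \eqref{Laplace-Beltrami} to simplify the derivative terms in $v$, the whole expression collapses to a short differential expression whose leading factor is proportional to $|v|\,Y_1^{m_1}(\sigma)\,\Psi_{n,l,m}(v)$.

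To identify the spectral decomposition of this output, I would apply the angular product rule from Corollary \ref{harmonic-c},
$$Y_1^{m_1}(\sigma)Y_l^m(\sigma)=\tilde{C}^{m_1,m}_{l,l+1}\,Y_{l+1}^{m_1+m}(\sigma)+\tilde{C}^{m_1,m}_{l,l-1}\,Y_{l-1}^{m_1+m}(\sigma),$$
which immediately forces the two selection rules $(n,l,m)\mapsto(n,l+1,m_1+m)$ and $(n,l,m)\mapsto(n+1,l-1,m_1+m)$ and explains why only two eigenfunctions appear. The remaining radial rearrangement, expressing $|v|^{l\pm 1}L_n^{(l+1/2)}(|v|^2/2)$ in terms of $|v|^{l+1}L_n^{(l+3/2)}(|v|^2/2)$ and $|v|^{l-1}L_{n+1}^{(l-1/2)}(|v|^2/2)$ respectively, is carried out with the standard Laguerre index-shift identities. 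The main technical obstacle is the bookkeeping: tracking the normalization $(n!/\Gamma(n+l+3/2))^{1/2}$ through the Laguerre index shifts so that the ratio of normalizations of $\varphi_{n,l,m}$ and $\varphi_{n+1,l-1,m_1+m}$ (resp.\ $\varphi_{n,l+1,m_1+m}$) produces exactly the factor $\sqrt{2(n+1)}$ (resp.\ $\sqrt{2n+2l+3}$), and verifying that the radial differential terms combine with the $|v|Y_1^{m_1}$ prefactor to yield cleanly the multiplicative factors $(l-1)$ and $(l+2)$ together with the overall constant $4\sqrt{\pi/3}$, with no spurious components outside the stated two.
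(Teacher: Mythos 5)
Your overall route coincides with the paper's: reduce the bilinear integral to the first moments of $\Psi_{0,1,m_1}$ (the zeroth and second moments vanish, and the $\nabla_{v_*}$-term reduces by integration by parts to first moments), identify these via \eqref{orth-1}, and then re-expand the product of $|v|Y^{m_1}_1$ with the radial part using Corollary \ref{harmonic-c} and the Laguerre index-shift identities. Two of your intermediate claims, however, would not survive the actual computation. First, no third moments of $\Psi_{0,1,m_1}$ ever appear: the entries $a_{k,j}(v-v_*)$ are quadratic in $v_*$, so the highest moments arising are second moments of $\Psi_{0,1,m_1}$ (which vanish) and second moments of $\partial_{v_*}\Psi_{0,1,m_1}$ (which integrate by parts to first moments); the ``Gaussian fourth-moment tensor'' piece is spurious. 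More seriously, \eqref{partial_2} and \eqref{Laplace-Beltrami} are the tools of Lemma \ref{lemma5.1+} and do not apply here: after the moment reduction the operator acting on $\Psi_{n,l,m}$ is $\sum_{k\neq j}\bigl(-2c_jv_j\partial^2_{v_k}+c_kv_j\partial_{v_k}\partial_{v_j}+c_jv_k\partial_{v_k}\partial_{v_j}\bigr)$ with $c_j=\int v^*_j\Psi_{0,1,m_1}\,dv_*$, which is linear in the $c_j$ and has none of the symmetric structure those identities exploit. The paper handles it by Fourier transform, where \eqref{fourier} turns it into $2i\sum_jc_j\partial_{\xi_j}\bigl(|\xi|^2\widehat{\Psi}_{n,l,m}\bigr)-2i\,(c\cdot\xi)\bigl(|\xi|\partial_{|\xi|}+4\bigr)\widehat{\Psi}_{n,l,m}$, a form on which the Laguerre recurrences run cleanly; if you stay in physical space you must instead differentiate the explicit Laguerre--spherical-harmonic representation twice, and that is where the real work sits, not in a collapse via \eqref{partial_2}.

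Second, the angular product rule does not by itself ``explain why only two eigenfunctions appear''. It only forces $l\mapsto l\pm1$; each of the two halves of the operator (the $\mathbf{H_1}$ and $\mathbf{H_2}$ of the paper) then produces four components, $\varphi_{n,l+1}$, $\varphi_{n+1,l+1}$, $\varphi_{n+1,l-1}$ and $\varphi_{n+2,l-1}$, all with $m\mapsto m+m_1$. The $\varphi_{n+1,l+1}$ and $\varphi_{n+2,l-1}$ contributions carry identical coefficients in $\mathbf{H_1}$ and $\mathbf{H_2}$ and disappear only upon taking the difference $\mathbf{H_1}-\mathbf{H_2}$; the surviving factors are precisely $(2n+l+4)-2(n+1)=l+2$ and $(2n+2l+3)-(2n+l+4)=l-1$, i.e.\ the stated $A^{\pm}_{n,l,m,m_1}$. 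Your sketch defers this to ``bookkeeping'' and a verification of ``no spurious components'', but this cancellation is the decisive step of the proof and any argument along your lines must exhibit it explicitly.
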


\begin{proof}
For $|m_1|\leq1$, for all $n,l\in\mathbb{N},|m|\le\,l$, one has
\begin{align*}
&{\bf L}(\varphi_{0,1,m_1},\varphi_{n,l,m})
=\frac{1}{\sqrt{\mu(v)}}Q(\Psi_{0,1,m_1},\Psi_{n,l,m})\\
=&\frac{1}{\sqrt{\mu(v)}}\sum_{1\leq\,k,j\leq3}\partial_{v_k}\int_{\mathbb{R}^{3}}
a_{k,j}(v-v_*)\\
&\qquad\times\left[\Psi_{0,1,m_1}(v_*)
\partial_{v_j}\Psi_{n,l,m}(v)
-\partial_{v^*_j}\Psi_{0,1,m_1}(v_*)\Psi_{n,l,m}(v)\right]
dv_*.\\
=&{\bf L}_1(\varphi_{0,1,m_1},\varphi_{n,l,m})-{\bf L}_2(\varphi_{0,1,m_1},\varphi_{n,l,m}).
\end{align*}
Recall,
\begin{align*}
&\int_{\mathbb{R}^{3}}
\partial_{v^*_k}\Psi_{0,1,m_1}(v_*)dv_*
=\left.i\xi_k\widehat{\Psi_{0,1,m_1}}(\xi)\right|_{\xi=0}=0,\quad\,k=1,2,3;\\
&\int_{\mathbb{R}^{3}}
v^*_j\partial_{v^*_k}\Psi_{0,1,m_1}(v_*)dv_*
=-\left.\xi_k\partial_{\xi^*_j}\widehat{\Psi_{0,1,m_1}}(\xi)\right|_{\xi=0}=0,\quad\,k\neq\,j;\\
&\int_{\mathbb{R}^{3}}
(v^*_j)^2\partial_{v^*_k}\Psi_{0,1,m_1}(v_*)dv_*
=-\left.i\xi_k\partial^2_{\xi^*_j}\widehat{\Psi_{0,1,m_1}}(\xi)\right|_{\xi=0}=0,\quad\,k\neq\,j;\\
&\int_{\mathbb{R}^{3}}
v^*_j\partial_{v^*_j}\Psi_{0,1,m_1}(v_*)dv_*
=\left.-\widehat{\Psi_{0,1,m_1}}(\xi)
-\xi_j\partial_{\xi^*_j}\widehat{\Psi_{0,1,m_1}}(\xi)\right|_{\xi=0}=0;\\
&\int_{\mathbb{R}^{3}}
v^*_kv^*_j\partial_{v^*_j}\Psi_{0,1,m_1}(v_*)dv_*
=-\int_{\mathbb{R}^{3}}
v^*_k\Psi_{0,1,m_1}(v_*)dv_*.
\end{align*}
By using the relation \eqref{relation2}, one can verify that
\begin{align*}
&\int_{\mathbb{R}^{3}}\Psi_{0,1,m_1}(v_*)dv_*
=(\varphi_{0,1,m_1},\varphi_{0,0,0})_{L^2(\mathbb{R}^3)}=0,\\
&\int_{\mathbb{R}^{3}}v^*_kv^*_j\Psi_{0,1,m_1}(v_*)dv_*=0,\quad\forall\,1\leq\,k,j\leq3.
\end{align*}
We can conclude that
\begin{align*}
&{\bf L}(\varphi_{0,1,m_1},\varphi_{n,l,m})={\bf L}_1(\varphi_{0,1,m_1},\varphi_{n,l,m})-{\bf L}_2(\varphi_{0,1,m_1},\varphi_{n,l,m})\\
&=\frac{1}{\sqrt{\mu(v)}}\sum_{\substack{1\leq\,k,j\leq3\\k\neq\,j}}\Big[-2\left(\int_{\mathbb{R}^3}v^*_j\Psi_{0,1,m_1}(v_*)dv_*\right)v_j\partial^2_{v_k}\Psi_{n,l,m}(v)\\
&\qquad\qquad\qquad\qquad\quad+\left(\int_{\mathbb{R}^3}v^*_k\Psi_{0,1,m_1}(v_*)dv_*\right)v_j\partial_{v_k}\partial_{v_j}\Psi_{n,l,m}(v)\\
&\qquad\qquad\qquad\qquad\quad+\left(\int_{\mathbb{R}^3}v^*_j\Psi_{0,1,m_1}(v_*)dv_*\right)v_k\partial_{v_k}\partial_{v_j}\Psi_{n,l,m}(v)\Big].
\end{align*}
By Fourier transformation, we have
\begin{align*}
&\mathcal{F}[\sqrt{\mu}\, {\bf L}(\varphi_{0,1,m_1},\varphi_{n,l,m})](\xi)\\
&=2i\sum^3_{j=1}\left(\int_{\mathbb{R}^3}v^*_j\Psi_{0,1,m_1}(v_*)dv_*\right)\partial_{\xi_j}\left(|\xi|^2\widehat{\Psi_{n,l,m}}(\xi)\right)\\
&\quad-2i\left(\int_{\mathbb{R}^3}(v^*\cdot\xi)\Psi_{0,1,m_1}(v_*)dv_*\right)\left(|\xi|\partial_{|\xi|}+4\right)\widehat{\Psi_{n,l,m}}(\xi)\\
&=\widehat{\mathbf{H_{1}}}(\xi)-\widehat{\mathbf{H_{2}}}(\xi)
\end{align*}
where $v^*\cdot\xi=\sum^3_{k=1}v^*_k\xi_k.$
We simplify the calculation into the following equality
\begin{equation}\label{simple}
{\bf L}(\varphi_{0,1,m_1},\varphi_{n,l,m})=\frac{1}{\sqrt{\mu}}\left[\mathbf{H_1}(v)-\mathbf{H_2}(v)\right].
\end{equation} By using the explicit formula of the Fourier transform of $\Psi_{n,l,m}$
 that,
 $$\widehat{\Psi_{n,l,m}}=B_{n,l}|\xi|^{2n+l}e^{-\frac{|\xi|^2}{2}}Y^{m}_l(\frac{\xi}{|\xi|})$$
 with $B_{n,l}$ defined in \eqref{B_NL}, we have
 $$|\xi|^2\widehat{\Psi_{n,l,m}}(\xi)
 =B_{n,l}|\xi|^{2(n+1)+l}e^{-\frac{|\xi|^2}{2}}Y^{m}_l(\frac{\xi}{|\xi|})=\frac{B_{n,l}}{B_{n+1,l}}\widehat{\Psi_{n+1,l,m}}.$$
It follows from the inverse Fourier transformation of $\widehat{\mathbf{H_1}}(\xi)$ and the equality \eqref{orth-1} in Lemma \ref{addition},
\begin{align*}
\mathbf{H_{1}}(v)&=2\frac{B_{n,l}}{B_{n+1,l}}\sum^3_{j=1}\left(\int_{\mathbb{R}^3}v^*_j\Psi_{0,1,m_1}(v_*)dv_*\right)v_j\Psi_{n+1,l,m}(v)\\
&=2\frac{B_{n,l}}{B_{n+1,l}}\left(\int_{\mathbb{R}^3}(v_*\cdot\,v)\Psi_{0,1,m_1}(v_*)dv_*\right)\Psi_{n+1,l,m}(v)\\
&=2\sqrt{\frac{4\pi}{3}}\frac{B_{n,l}}{B_{n+1,l}}\, |v|\, Y^{m_1}_1\left(\frac{v}{|v|}\right)\Psi_{n+1,l,m}(v)\\
&=2\sqrt{\frac{4\pi}{3}}\frac{(n+1)!B_{n,l}}{(-i)^l2^{(n+1)}}|v|^{l+1}e^{-\frac{|v|^{2}}{2}}
L^{(l+1/2)}_{n+1}\left(\frac{|v|^{2}}{2}\right)\\
&\qquad\times\left(\tilde{C}^{m_1,m}_{l,l+1}
Y^{m_1+m}_{l+1}(\sigma)+\tilde{C}^{m_1,m}_{l,l-1}
Y^{m_1+m}_{l-1}(\sigma)\right)
\end{align*}
Using the formulas $(10')$,$(11)$,$(12)$ of Sec.1, Chap.IV in \cite{San} for $x=\frac{|v|^2}{2}$, we have
$$L^{(l+\frac{1}{2})}_{n+1}(x)
=L^{(l+\frac{3}{2})}_{n+1}(x)
-L^{(l+\frac{3}{2})}_{n}(x),\quad\,n=0,1,\cdots$$
and
\begin{align*}
xL^{(l+\frac{1}{2})}_{n+1}(x)
%=-x\frac{d}{dx}L^{(\tilde{l}-\frac{1}{2})}_{\tilde{n}+2}(x)
=(n+l+\frac{3}{2})L^{(l-\frac{1}{2})}_{n+1}(x)
-(n+2)L^{(l-\frac{1}{2})}_{n+2}(x).
\end{align*}
Direct calculation shows that
\begin{align*}
\mathbf{H_{1}}(v)&=4\sqrt{\frac{\pi}{3}}\tilde{C}^{m_1,m}_{l,l+1}\sqrt{2(n+1)(2n+2l+3)(2n+2l+5)}\Psi_{n+1,l+1,m_1+m}(v)\\
&\quad-4\sqrt{\frac{\pi}{3}}\tilde{C}^{m_1,m}_{l,l+1}2(n+1)\sqrt{2n+2l+3}\Psi_{n,l+1,m_1+m}(v)\\
&\quad+4\sqrt{\frac{\pi}{3}}\tilde{C}^{m_1,m}_{l,l-1}(2n+2l+3)\sqrt{2(n+1)}\Psi_{n+1,l-1,m_1+m}(v)\\
&\quad-4\sqrt{\frac{\pi}{3}}\tilde{C}^{m_1,m}_{l,l-1}\sqrt{4(n+1)(n+2)(2n+2l+3)}\Psi_{n+2,l-1,m_1+m}(v).
\end{align*}
Now we calculate $\mathbf{H_{2}}$.
We can deduce from the equality \eqref{orth-1} in Lemma \ref{addition} with $v=\xi$ that
\begin{align*}
\widehat{\mathbf{H_{2}}}(\xi)
&=2i\sqrt{\frac{4\pi}{3}\, }|\xi|\, Y^{m_1}_1(\frac{\xi}{|\xi|})\\
&\qquad\times[(2n+l+4)\widehat{\Psi_{n,l,m}}-\sqrt{2(n+1)(2n+2l+3)}\widehat{\Psi_{n+1,l,m}}]
\end{align*}
By using the Corollary \ref{harmonic-c} and the explicit formula of $\widehat{\Psi_{n,l,m}}$,  one can verify that
\begin{align*}
\widehat{\mathbf{H_{2}}}(\xi)
&=2i\sqrt{\frac{4\pi}{3}}(2n+l+4)B_{n,l}|\xi|^{2n+l+1}e^{-\frac{|\xi|^2}{2}}\\
&\qquad\times\left(\tilde{C}^{m_1,m}_{l,l+1}
Y^{m_1+m}_{l+1}+\tilde{C}^{m_1,m}_{l,l-1}
Y^{m_1+m}_{l-1}\right)\\
&\quad-2i\sqrt{\frac{4\pi}{3}}\sqrt{2(n+1)(2n+2l+3)}B_{n+1,l}|\xi|^{2n+l+3}e^{-\frac{|\xi|^2}{2}}\\
&\qquad\times\left(\tilde{C}^{m_1,m}_{l,l+1}
Y^{m_1+m}_{l+1}+\tilde{C}^{m_1,m}_{l,l-1}
Y^{m_1+m}_{l-1}\right)\\
&=-4\sqrt{\frac{\pi}{3}}\tilde{C}^{m_1,m}_{l,l+1}(2n+l+4)\sqrt{2n+2l+3}\,\mathcal{F}(\Psi_{n,l+1,m_1+m})\\
&\quad+4\sqrt{\frac{\pi}{3}}\tilde{C}^{m_1,m}_{l,l-1}(2n+l+4)\sqrt{2(n+1)}\,\mathcal{F}(\Psi_{n+1,l-1,m_1+m})\\
&\quad+4\sqrt{\frac{\pi}{3}}\tilde{C}^{m_1,m}_{l,l+1}\sqrt{2(n+1)(2n+2l+3)}\sqrt{2n+2l+5}\,\mathcal{F}(\Psi_{n+1,l+1,m_1+m})\\
&\quad-4\sqrt{\frac{\pi}{3}}\tilde{C}^{m_1,m}_{l,l-1}\sqrt{4(n+1)(n+2)(2n+2l+3)}\,\mathcal{F}(\Psi_{n+2,l-1,m_1+m}).
\end{align*}
Then by the inverse Fourier transform of $\widehat{\mathbf{H_{2}}}(\xi)$ and substituting the equalities of $\mathbf{H_1}, \mathbf{H_2}$ into \eqref{simple}, we conclude that
\begin{align*}
&{\bf L}(\varphi_{0,1,m_1},\varphi_{n,l,m})=\frac{1}{\sqrt{\mu}}\left[\mathbf{H_1}(v)-\mathbf{H_2}(v)\right]\\
&=A^-_{n,l,m,m_1}\varphi_{n+1,l-1,m_1+m}+A^+_{n,l,m,m_1}\varphi_{n,l+1,m_1+m}.
\end{align*}
\end{proof}

\begin{lemma}\label{lemma5.3+}
For $n,l\in\mathbb{N}$, $|m|\leq\,l$, we have
\begin{align*}
{\bf L}(\varphi_{1,0,0},\varphi_{n,l,m})=\frac{4\sqrt{3(n+1)(2n+2l+3)}}{3}
\varphi_{n+1,l,m}.
\end{align*}
\end{lemma}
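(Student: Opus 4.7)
The plan is to follow the same template as in Lemma \ref{lemma5.2+}, namely write
$$
{\bf L}(\varphi_{1,0,0},\varphi_{n,l,m}) = \frac{1}{\sqrt{\mu(v)}} Q(\Psi_{1,0,0}, \Psi_{n,l,m}) = {\bf L}_1(\varphi_{1,0,0},\varphi_{n,l,m}) - {\bf L}_2(\varphi_{1,0,0},\varphi_{n,l,m})
$$
where ${\bf L}_1$ collects the diagonal entries $a_{kk}(v-v_*) = \sum_{i\neq k}(v_i-v_i^*)^2$ and ${\bf L}_2$ the off-diagonal entries $a_{kj}(v-v_*) = -(v_k-v_k^*)(v_j-v_j^*)$, $k\neq j$. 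The hope is that because $\varphi_{1,0,0}$ has no angular dependence, the decomposition will collapse to a Laplacian acting on $\Psi_{n,l,m}$.

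First I would compute the low-order moments of $\Psi_{1,0,0}=\sqrt{\mu}\varphi_{1,0,0}$. Orthogonality with $\varphi_{0,0,0}$ gives $\int \Psi_{1,0,0}(v_*)\,dv_*=0$; parity (the function is even) kills every first moment $\int v_j^*\Psi_{1,0,0}\,dv_*$; and Lemma \ref{addition} part 3 together with the identity $\sqrt{2/3}=\sqrt{6}/3$ yields
$$
\int_{\mathbb{R}^3} v_k^* v_j^* \Psi_{1,0,0}(v_*)\,dv_* = -\sqrt{\tfrac{2}{3}}\,\delta_{kj}.
$$
Each of the derivative moments $\int v_\alpha^* v_\beta^* \partial_{v_\gamma^*}\Psi_{1,0,0}\,dv_*$ appearing in ${\bf L}_1$ and ${\bf L}_2$ can then be reduced by integration by parts to the basic moments above.

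Substituting these moments, I expect every term in ${\bf L}_2$ to vanish because $\int v_k^* v_j^*\Psi_{1,0,0}\,dv_*=0$ whenever $k\neq j$, while in ${\bf L}_1$ only the $\int (v_j^*)^2\Psi_{1,0,0}\,dv_*$ contribution survives and produces
$$
{\bf L}(\varphi_{1,0,0},\varphi_{n,l,m}) = -2\sqrt{\tfrac{2}{3}}\,\frac{1}{\sqrt{\mu(v)}}\,\Delta_v\Psi_{n,l,m}(v).
$$
The main obstacle is precisely this cancellation bookkeeping, though it is purely routine once the moment list is in hand.

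To finish, I would apply the Fourier representation \eqref{fourier}. Multiplying $\widehat{\Psi_{n,l,m}}(\xi)$ by $|\xi|^2$ just raises the radial factor from $|\xi|^{2n+l}$ to $|\xi|^{2(n+1)+l}$, giving
$$
-\Delta_v\Psi_{n,l,m} = \frac{B_{n,l}}{B_{n+1,l}}\,\Psi_{n+1,l,m},
$$
and the explicit form \eqref{B_NL} of $B_{n,l}$ yields the ratio $B_{n,l}/B_{n+1,l}=\sqrt{2(n+1)(2n+2l+3)}$. Combining the constant $2\sqrt{2/3}$ with this ratio and using $\sqrt{2/3}=\sqrt{6}/3$ produces the claimed coefficient $\frac{4\sqrt{3(n+1)(2n+2l+3)}}{3}$, and the proof is complete.
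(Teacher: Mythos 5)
Your proof is correct and follows essentially the same route as the paper: the identical moment computations for $\Psi_{1,0,0}$ (vanishing zeroth, first and mixed second moments, and $\int v_k^*v_j^*\Psi_{1,0,0}\,dv_*=-\sqrt{2/3}\,\delta_{kj}$ from Lemma \ref{addition}) reduce ${\bf L}(\varphi_{1,0,0},\varphi_{n,l,m})$ to a constant multiple of $\mu^{-1/2}\Delta_v\Psi_{n,l,m}$, which the paper phrases equivalently on the Fourier side as $\frac{2\sqrt{6}}{3}|\xi|^2\widehat{\Psi_{n,l,m}}$. The final identification $|\xi|^2\widehat{\Psi_{n,l,m}}=\frac{B_{n,l}}{B_{n+1,l}}\widehat{\Psi_{n+1,l,m}}$ with $B_{n,l}/B_{n+1,l}=\sqrt{2(n+1)(2n+2l+3)}$ is exactly the paper's step and yields the stated coefficient, so there is no gap.
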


\begin{proof} Firstly
$$
{\bf L}(\varphi_{1,0,0},\varphi_{n,l,m})={\bf L}_1(\varphi_{1,0,0},\varphi_{n,l,m})-{\bf L}_2(\varphi_{1,0,0},\varphi_{n,l,m}).
$$
Using
\begin{align*}
&\int_{\mathbb{R}^{3}}
\partial_{v^*_k}\Psi_{1,0,0}(v_*)dv_*
=\left.i\xi_k\widehat{\Psi_{1,0,0}}(\xi)\right|_{\xi=0}=0,\quad\,k=1,2,3;\\
&\int_{\mathbb{R}^{3}}
v^*_j\partial_{v^*_k}\Psi_{1,0,0}(v_*)dv_*
=-\left.\xi_k\partial_{\xi^*_j}\widehat{\Psi_{1,0,0}}(\xi)\right|_{\xi=0}=0,\quad\,k\neq\,j;\\
&\int_{\mathbb{R}^{3}}
(v^*_j)^2\partial_{v^*_k}\Psi_{1,0,0}(v_*)dv_*
=-\left.i\xi_k\partial^2_{\xi^*_j}\widehat{\Psi_{1,0,0}}(\xi)\right|_{\xi=0}=0,\quad\,k\neq\,j;\\
&\int_{\mathbb{R}^{3}}
v^*_j\partial_{v^*_j}\Psi_{1,0,0}(v_*)dv_*
=\left.-\widehat{\Psi_{1,0,0}}(0)
-\xi_j\partial_{\xi^*_j}\widehat{\Psi_{1,0,0}}(\xi)\right|_{\xi=0}=0;\\
&\int_{\mathbb{R}^{3}}
v^*_kv^*_j\partial_{v^*_j}\Psi_{1,0,0}(v_*)dv_*
=-\int_{\mathbb{R}^{3}}
v^*_k\Psi_{1,0,0}(v_*)dv_*,
\end{align*}
and
\begin{align*}
&\int_{\mathbb{R}^{3}}\Psi_{1,0,0}(v_*)dv_*
=(\varphi_{1,0,0},\varphi_{0,0,0})_{L^2(\mathbb{R}^3)}=0,\\
&\int_{\mathbb{R}^{3}}v^*_k\Psi_{1,0,0}(v_*)dv_*=0,\quad\forall\,1\leq\,k\leq3.
\end{align*}
We can conclude that
\begin{align*}
&{\bf L}(\varphi_{1,0,0},\varphi_{n,l,m})
={\bf L}_1(\varphi_{1,0,0},\varphi_{n,l,m})-{\bf L}_2(\varphi_{1,0,0},\varphi_{n,l,m})\\
&=\frac{1}{\sqrt{\mu(v)}}\sum_{\substack{1\leq\,k,j\leq3\\k\neq\,j}}
\left(\int_{\mathbb{R}^3}|v^*_j|^2\Psi_{1,0,0}(v_*)dv_*\right)\partial^2_{v_k}\Psi_{n,l,m}(v)\\
&\quad+\frac{1}{\sqrt{\mu(v)}}\sum_{\substack{1\leq\,k,j\leq3\\k\neq\,j}}\left(\int_{\mathbb{R}^3}v^*_k v^*_j\Psi_{1,0,0}(v_*)dv_*\right)\partial_{v_k}\partial_{v_j}\Psi_{n,l,m}(v).
\end{align*}
Again by the Fourier transformation of $\sqrt{\mu}\, {\bf L}(\varphi_{1,0,0},\varphi_{n,l,m})$, we have
\begin{align*}
&\mathcal{F}[\sqrt{\mu}\, {\bf L}(\varphi_{1,0,0},\varphi_{n,l,m})](\xi)\\
&=\int_{\mathbb{R}_{v_*}^3}\left(\xi\cdot\,v_*\right)^2\Psi_{1,0,0}(v_*)dv_*\widehat{\Psi_{n,l,m}}(\xi)
-\left(|v_*|^2\sqrt{\mu},\,\varphi_{1,0,0}\right)_{L^2(\mathbb{R}^3)}|\xi|^2\widehat{\Psi_{n,l,m}}(\xi).
\end{align*}
We deduce from  the equality \eqref{orth-3} in Lemma \ref{addition} that
\begin{equation*}
\int_{\mathbb{R}^3_{v_*}}(\xi\cdot\,v_*)^2\Psi_{1,0,0}(v_*)dv_*
=-\frac{\sqrt{6}}{3}|\xi|^2.
\end{equation*}
From the definition of eigenfunctions $\varphi_{1,0,0}, \varphi_{0,0,0}$, one can calculate that
 $$
|v|^2\sqrt{\mu}=3\varphi_{0,0,0}-\sqrt{6}\varphi_{1,0,0}
$$
Then
$$
-\left(|v_*|^2\sqrt{\mu},\,\varphi_{1,0,0}\right)_{L^2(\mathbb{R}^3)}=\sqrt{6}.
$$
This implies that
\begin{align*}
&\mathcal{F}[\sqrt{\mu}\, {\bf L}(\varphi_{1,0,0},\varphi_{n,l,m})](\xi)=\frac{2\sqrt{6}}{3}|\xi|^2\widehat{\Psi_{n,l,m}}(\xi)\\
&=\frac{4\sqrt{3(n+1)(2n+2l+3)}}{3}\widehat{\Psi_{n+1,l,m}}(\xi).
\end{align*}
We end the proof of the Lemma by inverse Fourier transformation.
\end{proof}

\begin{lemma}\label{lemma5.4+}
For $n,l\in\mathbb{N}$, $|m|\leq\,l$, we have
\begin{align*}
&{\bf L}(\varphi_{0,2,m_2},\varphi_{n,l,m})
=A^1_{n,l,m,m_2}
\varphi_{n+2,l-2,m+m_2}\\
&\qquad\quad+A^2_{n,l,m,m_2}\varphi_{n+1,l,m+m_2}+A^3_{n,l,m,m_2}\varphi_{n,l+2,m+m_2}, \,\forall\,|m_2|\leq 2
\end{align*}
with
\begin{equation}\label{An}
\begin{split}
&A^1_{n,l,m,m_2}=-4\sqrt{\frac{\pi}{15}}
\sqrt{4(n+2)(n+1)}\int_{\mathbb{S}^2}Y^{m_2}_2(\omega)
Y^{m}_{l}(\omega)Y^{-m_2-m}_{l-2}(\omega)d\omega;\\
&A^2_{n,l,m,m_2}=4\sqrt{\frac{\pi}{15}}
\sqrt{2(n+1)(2n+2l+3)}\\
&\qquad\qquad\qquad\qquad\times\int_{\mathbb{S}^2}Y^{m_2}_2(\omega)
Y^{m}_{l}(\omega)Y^{-m_2-m}_{l}(\omega)d\omega;\\
&A^3_{n,l,m,m_2}=-4\sqrt{\frac{\pi}{15}}
\sqrt{(2n+2l+5)(2n+2l+3)}\\
&\qquad\qquad\qquad\qquad\times\int_{\mathbb{S}^2}Y^{m_2}_2(\omega)
Y^{m}_{l}(\omega)Y^{-m_2-m}_{l+2}(\omega)d\omega.
\end{split}
\end{equation}
\end{lemma}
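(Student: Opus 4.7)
The proof proceeds in parallel with Lemmas \ref{lemma5.2+} and \ref{lemma5.3+}. Writing $\mathbf{L}(\varphi_{0,2,m_2},\varphi_{n,l,m}) = \frac{1}{\sqrt{\mu}}[\mathbf{L}_1 - \mathbf{L}_2]$ from the definition of $Q$, the calculation reduces to a Fourier-side evaluation, since $a_{ij}(v-v_*)$ is polynomial of total degree two in $(v,v_*)$. The required moments of $\Psi_{0,2,m_2}$ and its $v^*$-derivatives are read off from $\widehat{\Psi_{0,2,m_2}}$ at $\xi=0$. The orthogonality $\varphi_{0,2,m_2}\perp \mathcal{N}$ forces $\int \Psi_{0,2,m_2}\,dv_* = 0$ and $\int v^*_i\,\Psi_{0,2,m_2}\,dv_* = 0$, while the identity $|v|^2\sqrt{\mu}= 3\varphi_{0,0,0}-\sqrt{6}\varphi_{1,0,0}$ (already used in Lemma \ref{lemma5.3+}) forces the trace $\sum_i \int (v^*_i)^2\Psi_{0,2,m_2}\,dv_* = 0$. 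Only the traceless symmetric part of $\int v^*_i v^*_j \Psi_{0,2,m_2}\,dv_*$ survives, and part (2) of Lemma \ref{addition} packages it as $\int (v^*\cdot\xi)^2 \Psi_{0,2,m_2}\,dv_* = \sqrt{16\pi/15}\,|\xi|^2 Y^{m_2}_{2}(\xi/|\xi|)$.

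After these cancellations, the Fourier transform of $\sqrt{\mu}\,\mathbf{L}(\varphi_{0,2,m_2},\varphi_{n,l,m})$ is a sum of pieces of the shape $|\xi|^2 Y^{m_2}_{2}(\xi/|\xi|)$ times either $\widehat{\Psi_{n,l,m}}$, $|\xi|^2\widehat{\Psi_{n,l,m}}$, or a first-order $\xi$-derivative combination (the analogues of the $\mathbf{H}_1,\mathbf{H}_2$ terms from Lemma \ref{lemma5.2+}, with the scalar weights $\int v^*_j\Psi\,dv_*$ replaced by the quadratic form $\int v^*_iv^*_j\Psi\,dv_*$). Corollary \ref{harmonic-c} then splits each piece into three angular channels,
\begin{equation*}
Y^{m_2}_{2}(\omega)Y^{m}_{l}(\omega)= C^{m_2,m}_{l,l+2}Y^{m_2+m}_{l+2}(\omega)+C^{m_2,m}_{l,l}Y^{m_2+m}_{l}(\omega)+C^{m_2,m}_{l,l-2}Y^{m_2+m}_{l-2}(\omega),
\end{equation*}
which exactly matches the target triple $(n+2,l-2),(n+1,l),(n,l+2)$ arising after I increase the radial weight $|\xi|^2$ once more to balance the shift in $l$ (since $2n+l$ is preserved in each of the three targets).

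The radial reduction in each channel is carried out by applying twice the Laguerre identities from San, Ch.\,IV, formulas $(10')$, $(11)$, $(12)$ (the same ones invoked in the proof of Lemma \ref{lemma5.2+}), to rewrite $|\xi|^{2n+l+2}e^{-|\xi|^2/2}L^{(l+1/2)}_{n}(|\xi|^2/2)$ as a combination of $|\xi|^{2n'+l'}e^{-|\xi|^2/2}L^{(l'+1/2)}_{n'}(|\xi|^2/2)$ with the appropriate $(n',l')$. Inverting the Fourier transform via \eqref{fourier} and absorbing the ratios $B_{n,l}/B_{n',l'}$ from \eqref{B_NL} yields the eigenfunctions $\varphi_{n+2,l-2,m+m_2}$, $\varphi_{n+1,l,m+m_2}$, $\varphi_{n,l+2,m+m_2}$ with explicit coefficients, which I expect to simplify to the stated $A^{1}, A^{2}, A^{3}$ in \eqref{An}.

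The main obstacle is combinatorial bookkeeping: each two-step Laguerre recursion produces several stray Laguerre monomials at various shifted $(n',l')$, and one must verify that the excess terms generated by $\mathbf{L}_1$ cancel exactly against their counterparts from $\mathbf{L}_2$ so that only the three target eigenfunctions remain. This is the same cancellation mechanism seen in Lemma \ref{lemma5.2+} (where $\Psi_{n+1,l+1,m_1+m}$ and $\Psi_{n+2,l-1,m_1+m}$ appeared in both $\mathbf{H}_1$ and $\mathbf{H}_2$ and had to combine), only now with a richer family of intermediate terms because the angular shift is by two units rather than one.
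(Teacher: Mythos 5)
Your skeleton has the right ingredients (vanishing moments, Lemma \ref{addition} (2), Corollary \ref{harmonic-c}, Fourier inversion via \eqref{fourier} with the ratios of $B_{n,l}$), but the route you describe misreads the structure of this case and leaves the substantive part of the lemma unproved. First, after the cancellations you correctly list, there are \emph{no} analogues of the $\mathbf{H}_1,\mathbf{H}_2$ terms: every moment of $\nabla_{v_*}\Psi_{0,2,m_2}$ against the entries of $a(v-v_*)$ vanishes, and the first moments $\int v^*_j\Psi_{0,2,m_2}\,dv_*$ vanish, so the coefficient $\int a_{kj}(v-v_*)\Psi_{0,2,m_2}(v_*)\,dv_*$ is a \emph{constant} in $v$. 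Hence ${\bf L}(\varphi_{0,2,m_2},\varphi_{n,l,m})$ is a constant-coefficient second-order operator applied to $\Psi_{n,l,m}$, exactly as in Lemma \ref{lemma5.3+} and unlike Lemma \ref{lemma5.2+}; on the Fourier side one gets the single term
\begin{equation*}
\mathcal{F}\bigl[\sqrt{\mu}\,{\bf L}(\varphi_{0,2,m_2},\varphi_{n,l,m})\bigr](\xi)
=4\sqrt{\tfrac{\pi}{15}}\,|\xi|^2\,Y^{m_2}_2\!\left(\tfrac{\xi}{|\xi|}\right)\widehat{\Psi_{n,l,m}}(\xi),
\end{equation*}
the isotropic piece being killed by $\bigl(|v|^2\sqrt{\mu},\varphi_{0,2,m_2}\bigr)_{L^2}=0$. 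There are no first-order $\xi$-derivative combinations to track, and no cancellation between ${\bf L}_1$ and ${\bf L}_2$ to organize, because the ${\bf L}_2$ contribution vanishes identically at the moment level.

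Second, and more seriously, your radial reduction is based on a wrong form of $\widehat{\Psi_{n,l,m}}$: by \eqref{fourier} it equals $B_{n,l}|\xi|^{2n+l}e^{-|\xi|^2/2}Y^m_l(\xi/|\xi|)$, with \emph{no} Laguerre factor, so the two-step Laguerre recursions you plan to apply (San's $(10')$, $(11)$, $(12)$, as in Lemma \ref{lemma5.2+}) are neither needed nor applicable here — those identities were used in Lemma \ref{lemma5.2+} on the physical-space term $\mathbf{H}_1$, not in Fourier variables. Multiplication by $|\xi|^2$ together with the splitting $Y^{m_2}_2Y^m_l=\sum_{l'=l-2,l,l+2}C^{m_2,m}_{l,l'}Y^{m+m_2}_{l'}$ immediately yields multiples of $\widehat{\Psi_{n+2,l-2,m+m_2}}$, $\widehat{\Psi_{n+1,l,m+m_2}}$, $\widehat{\Psi_{n,l+2,m+m_2}}$, since $2n+l+2$ equals $2n'+l'$ for each target, and the coefficients are exactly $4\sqrt{\pi/15}\,C^{m_2,m}_{l,l'}\,B_{n,l}/B_{n',l'}$. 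Evaluating these ratios from \eqref{B_NL} is what produces the factors $\sqrt{4(n+2)(n+1)}$, $\sqrt{2(n+1)(2n+2l+3)}$, $\sqrt{(2n+2l+5)(2n+2l+3)}$ and the signs in \eqref{An}; this is the actual content of the lemma, and your proposal defers it to an expectation ("which I expect to simplify") while anticipating a bookkeeping obstacle that does not exist. As written, the proof is incomplete at precisely the point where the stated coefficients must be derived.
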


\begin{proof}
For $|m_2|\leq2$, for all $n,l\in\mathbb{N},|m|\le\,l$, one has
$$
{\bf L}(\varphi_{0,2,m_2},\varphi_{n,l,m})={\bf L}_1(\varphi_{0,2,m_2},\varphi_{n,l,m})-{\bf L}_2(\varphi_{0,2,m_2},
\varphi_{n,l,m}).
$$
Using now
\begin{align*}
&\int_{\mathbb{R}^{3}}
\partial_{v^*_k}\Psi_{0,2,m_2}(v_*)dv_*
=\left.i\xi_k\widehat{\Psi_{0,2,m_2}}(\xi)\right|_{\xi=0}=0,\quad\,k=1,2,3;\\
&\int_{\mathbb{R}^{3}}
v^*_j\partial_{v^*_k}\Psi_{0,2,m_2}(v_*)dv_*
=-\left.\xi_k\partial_{\xi^*_j}\widehat{\Psi_{0,2,m_2}}(\xi)\right|_{\xi=0}=0,\quad\,k\neq\,j;\\
&\int_{\mathbb{R}^{3}}
(v^*_j)^2\partial_{v^*_k}\Psi_{0,2,m_2}(v_*)dv_*
=-\left.i\xi_k\partial^2_{\xi^*_j}\widehat{\Psi_{0,2,m_2}}(\xi)\right|_{\xi=0}=0,\quad\,k\neq\,j;\\
&\int_{\mathbb{R}^{3}}
v^*_j\partial_{v^*_j}\Psi_{0,2,m_2}(v_*)dv_*
=\left.-\widehat{\Psi_{0,2,m_2}}(0)
-\xi_j\partial_{\xi^*_j}\widehat{\Psi_{0,2,m_2}}(\xi)\right|_{\xi=0}=0;\\
&\int_{\mathbb{R}^{3}}
v^*_kv^*_j\partial_{v^*_j}\Psi_{0,2,m_2}(v_*)dv_*
=-\int_{\mathbb{R}^{3}}
v^*_k\Psi_{0,2,m_2}(v_*)dv_*,
\end{align*}
and
\begin{align*}
&\int_{\mathbb{R}^{3}}\Psi_{0,2,m_2}(v_*)dv_*
=(\varphi_{0,2,m_2},\varphi_{0,0,0})_{L^2(\mathbb{R}^3)}=0,\\
&\int_{\mathbb{R}^{3}}v^*_k\Psi_{0,2,m_2}(v_*)dv_*=0,\quad\forall\,1\leq\,k\leq3.
\end{align*}
We can conclude that
\begin{align*}
&{\bf L}(\varphi_{0,2,m_2},\varphi_{n,l,m})
={\bf L}_1(\varphi_{0,2,m_2},\varphi_{n,l,m})-{\bf L}_2(\varphi_{0,2,m_2},\varphi_{n,l,m})\\
&=\frac{1}{\sqrt{\mu(v)}}\sum_{\substack{1\leq\,k,j\leq3\\k\neq\,j}}
\left(\int_{\mathbb{R}^3}|v^*_j|^2\Psi_{0,2,m_2}(v_*)dv_*\right)\partial^2_{v_k}\Psi_{n,l,m}(v)\\
&\quad+\frac{1}{\sqrt{\mu(v)}}\sum_{\substack{1\leq\,k,j\leq3\\k\neq\,j}}\left(\int_{\mathbb{R}^3}v^*_k v^*_j\Psi_{0,2,m_2}(v_*)dv_*\right)\partial_{v_k}\partial_{v_j}\Psi_{n,l,m}(v).
\end{align*}
By the Fourier transformation, we have
\begin{align*}
&\mathcal{F}[\sqrt{\mu}\, {\bf L}(\varphi_{0,2,m_2},\varphi_{n,l,m})](\xi)\\
&=-\left(|v_*|^2\sqrt{\mu},\,\varphi_{0,2,m_2}\right)_{L^2(\mathbb{R}^3)}|\xi|^2\widehat{\Psi_{n,l,m}}(\xi)\\
&\quad+\int_{\mathbb{R}_{v_*}^3}\left(\xi\cdot\,v_*\right)^2\Psi_{0,2,m_2}(v_*)
dv_*\widehat{\Psi_{n,l,m}}(\xi).
\end{align*}
By using
$$
\left(|v_*|^2\sqrt{\mu},\,\varphi_{0,2,m_2}\right)_{L^2(\mathbb{R}^3)}=\left(3\varphi_{0,0,0}-\sqrt{6}\varphi_{1,0,0},\varphi_{0,2,m_2}\right)_{L^2(\mathbb{R}^3)}=0.
$$
and by using the equality \eqref{orth-2} with $v=\xi$ in Lemma \ref{addition} that
\begin{equation*}
\int_{\mathbb{R}^3_{v_*}}(\xi\cdot\,v_*)^2\Psi_{0,2,m_2}(v_*)dv_*=\sqrt{\frac{16\pi}{15}}|\xi|^2Y^{m_2}_2(\frac{\xi}{|\xi|}),
\end{equation*}
we obtain
\begin{align*}
\mathcal{F}[\sqrt{\mu}\, {\bf L}(\varphi_{0,2,m_2},\varphi_{n,l,m})](\xi)=4\sqrt{\frac{\pi}{15}}|\xi|^2Y^{m_2}_2\left(\frac{\xi}{|\xi|}\right)\widehat{\Psi_{n,l,m}}(\xi).
\end{align*}
We apply Corollary \ref{harmonic-c} with $l=2$, $\omega=\frac{\xi}{|\xi|}$ that
$$Y^{m_2}_2(\omega)Y^{m}_{l}(\omega)=
C^{m_2,m}_{l,l-2}Y^{m_2+m}_{l-2}(\omega)
+C^{m_2,m}_{l,l}Y^{m_2+m}_{l}(\omega)
+C^{m_2,m}_{l,l+2}Y^{m_2+m}_{l+2}(\omega).$$
Recalled  that
$$\widehat{\Psi_{n,l,m}}(\xi)=B_{n,l}|\xi|^{2n+l}e^{-\frac{|\xi|^2}{2}}Y^m_l(\frac{\xi}{|\xi|}),$$
we have
\begin{align*}
&\mathcal{F}[\sqrt{\mu}\, {\bf L}(\varphi_{0,2,m_2},\varphi_{n,l,m})](\xi)\\
&=4\sqrt{\frac{\pi}{15}}B_{n,l}|\xi|^{2n+l+2}e^{-\frac{|\xi|^2}{2}}\\
&\quad\times\left(C^{m_2,m}_{l,l-2}Y^{m_2+m}_{l-2}(\omega)
+C^{m_2,m}_{l,l}Y^{m_2+m}_{l}(\omega)
+C^{m_2,m}_{l,l+2}Y^{m_2+m}_{l+2}(\omega)\right)\\
&=4\sqrt{\frac{\pi}{15}}\frac{B_{n,l}}{B_{n+2,l-2}}C^{m_2,m}_{l,l-2}\widehat{\Psi_{n+2,l-2,m}}(\xi)+4\sqrt{\frac{\pi}{15}}\frac{B_{n,l}}{B_{n+1,l}}C^{m_2,m}_{l,l-2}\widehat{\Psi_{n+1,l,m}}(\xi)\\
&\qquad+4\sqrt{\frac{\pi}{15}}\frac{B_{n,l}}{B_{n,l+2}}C^{m_2,m}_{l,l-2}\widehat{\Psi_{n,l+2,m}}(\xi).
\end{align*}
Direct calculation and the inverse Fourier transform implies that
\begin{align*}
{\bf L}(\varphi_{0,2,m_2},\varphi_{n,l,m})
&=A^1_{n,l,m,m_2}
\varphi_{n+2,l-2,m+m_2}\\
&\quad+A^2_{n,l,m,m_2}\varphi_{n+1,l,m+m_2}+A^3_{n,l,m,m_2}\varphi_{n,l+2,m+m_2}\,.
\end{align*}
This ends the proof of Lemma.
\end{proof}

\begin{lemma}\label{lemma5.5+}
For $n,l\in\mathbb{N}$, $|m|\leq\,l$, we have
\begin{align*}
{\bf L}(\varphi_{\tilde n,\tilde{l},\tilde{m}},\varphi_{n,l,m})=0, \,\,\,\,\forall\,2\tilde n+\tilde{l}>2,\, |\tilde m|\le \tilde l.
\end{align*}
\end{lemma}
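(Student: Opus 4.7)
My plan is to follow the exact same Fourier-analytic template as in Lemmas \ref{lemma5.1+}--\ref{lemma5.4+}, and observe that once $2\tilde n + \tilde l > 2$ every single scalar factor that could possibly appear in the expansion of ${\bf L}(\varphi_{\tilde n,\tilde l,\tilde m},\varphi_{n,l,m})$ vanishes for an orthogonality reason. Concretely, I would first split
\[
{\bf L}(\varphi_{\tilde n,\tilde l,\tilde m},\varphi_{n,l,m})={\bf L}_1(\varphi_{\tilde n,\tilde l,\tilde m},\varphi_{n,l,m})-{\bf L}_2(\varphi_{\tilde n,\tilde l,\tilde m},\varphi_{n,l,m})
\]
along the lines used before, and expand the kernel $a_{k,j}(v-v_*)$ in powers of $v_*$. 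Since $a(v-v_*)=|v-v_*|^2\mathbf I-(v-v_*)\otimes(v-v_*)$ is a polynomial of total degree $\le 2$ in $v_*$, after collecting terms every summand factors as a $v$-dependent part (polynomial times a derivative of $\Psi_{n,l,m}$) multiplied by a scalar of one of the two types
\[
M_\alpha:=\int_{\mathbb R^3}(v_*)^{\alpha}\Psi_{\tilde n,\tilde l,\tilde m}(v_*)\,dv_*,\qquad
N_{\alpha,j}:=\int_{\mathbb R^3}(v_*)^{\alpha}\partial_{v^*_j}\Psi_{\tilde n,\tilde l,\tilde m}(v_*)\,dv_*,
\]
with $|\alpha|\le 2$.

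The key step is to show all these $M_\alpha$ and $N_{\alpha,j}$ vanish. For the derivative moments $N_{\alpha,j}$ I would do one integration by parts: either they reduce to a plain moment $M_\beta$ with $|\beta|\le 1$, or they equal (up to a constant) a derivative of $\widehat{\Psi_{\tilde n,\tilde l,\tilde m}}$ evaluated at $\xi=0$, and from \eqref{fourier} the symbol $\widehat{\Psi_{\tilde n,\tilde l,\tilde m}}(\xi)=B_{\tilde n,\tilde l}|\xi|^{2\tilde n+\tilde l}e^{-|\xi|^2/2}Y^{\tilde m}_{\tilde l}(\xi/|\xi|)$ vanishes to order $2\tilde n+\tilde l\ge 3$ at the origin, killing any derivative of order $\le 2$. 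So everything reduces to showing $M_\alpha=0$ for $|\alpha|\le 2$.

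For this last step I would use the explicit identities $\sqrt\mu=\varphi_{0,0,0}$, the formulas for $\varphi_{0,1,m_1}$ in Section \ref{S2}, and the decomposition \eqref{relation2} asserting that
\[
v_kv_j\sqrt\mu\in\mathrm{span}\{\varphi_{0,0,0},\varphi_{1,0,0},\varphi_{0,2,0},\varphi_{0,2,\pm1},\varphi_{0,2,\pm2}\}.
\]
Consequently, for every multi-index $\alpha$ with $|\alpha|\le 2$, the function $(v_*)^\alpha\sqrt\mu(v_*)$ lies in the finite-dimensional space spanned by the $\varphi_{n',l',m'}$ with $2n'+l'\le 2$. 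Since $2\tilde n+\tilde l>2$, the $L^2$-orthogonality of $\{\varphi_{n,l,m}\}$ yields
\[
M_\alpha=(\varphi_{\tilde n,\tilde l,\tilde m},(v_*)^{\alpha}\sqrt\mu)_{L^2(\mathbb R^3)}=0.
\]
Putting everything together, ${\bf L}_1={\bf L}_2=0$ identically, which gives the claim.

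The only mildly technical point is the bookkeeping in the second paragraph: one has to make sure that, after the single integration by parts, no $N_{\alpha,j}$ generates a moment of order $>2$. This is automatic because the derivative strictly drops the polynomial order, and because $a$ itself is quadratic; no genuine obstacle arises. Everything else is parallel to the computations already performed in Lemmas \ref{lemma5.1+}--\ref{lemma5.4+}, so no new estimate or identity is needed.
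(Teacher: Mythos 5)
Your proposal is correct and follows essentially the same route as the paper: split ${\bf L}={\bf L}_1-{\bf L}_2$, reduce everything to moments of $\Psi_{\tilde n,\tilde l,\tilde m}$ and of its first derivatives of order at most $2$, kill the derivative moments via integration by parts or the vanishing of $\widehat{\Psi_{\tilde n,\tilde l,\tilde m}}$ at $\xi=0$ (it vanishes to order $2\tilde n+\tilde l\ge 3$), and kill the plain moments by observing that $(v_*)^{\alpha}\sqrt{\mu}$ with $|\alpha|\le 2$ lies in the span of the eigenfunctions with $2n'+l'\le 2$, hence is orthogonal to $\varphi_{\tilde n,\tilde l,\tilde m}$ by \eqref{relation2}. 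The paper's proof is just a more explicit, term-by-term version of this same argument, so no gap remains.
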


\begin{proof}
For any $n,l\in\mathbb{N}, \tilde{m}\in\mathbb{Z}$,  and $2\tilde{n}+\tilde{l}>2$, $|\tilde{m}|\leq \tilde{l}$,  we have again
$$
{\bf L}(\varphi_{\tilde{n},\tilde{l},\tilde{m}},\varphi_{n,l,m})={\bf L}_1(\varphi_{\tilde{n},\tilde{l},\tilde{m}},\varphi_{n,l,m})-{\bf L}_2(\varphi_{\tilde{n},\tilde{l},\tilde{m}},\varphi_{n,l,m}).
$$
Using the facts
\begin{align*}
&\int_{\mathbb{R}^{3}}
\partial_{v^*_k}\Psi_{\tilde{n},\tilde{l},\tilde{m}}(v_*)dv_*
=\left.i\xi_k\widehat{\Psi_{\tilde{n},\tilde{l},\tilde{m}}}(\xi)\right|_{\xi=0}=0,\quad\,k=1,2,3;\\
&\int_{\mathbb{R}^{3}}
v^*_j\partial_{v^*_k}\Psi_{\tilde{n},\tilde{l},\tilde{m}}(v_*)dv_*
=-\left.\xi_k\partial_{\xi^*_j}\widehat{\Psi_{\tilde{n},\tilde{l},\tilde{m}}}(\xi)\right|_{\xi=0}=0,\quad\,k\neq\,j;\\
&\int_{\mathbb{R}^{3}}
(v^*_j)^2\partial_{v^*_k}\Psi_{\tilde{n},\tilde{l},\tilde{m}}(v_*)dv_*
=-\left.i\xi_k\partial^2_{\xi^*_j}\widehat{\Psi_{\tilde{n},\tilde{l},\tilde{m}}}(\xi)\right|_{\xi=0}=0,\quad\,k\neq\,j;\\
&\int_{\mathbb{R}^{3}}
v^*_j\partial_{v^*_j}\Psi_{\tilde{n},\tilde{l},\tilde{m}}(v_*)dv_*
=\left.-\widehat{\Psi_{\tilde{n},\tilde{l},\tilde{m}}}(0)
-\xi_j\partial_{\xi^*_j}\widehat{\Psi_{\tilde{n},\tilde{l},\tilde{m}}}(\xi)\right|_{\xi=0}=0;\\
&\int_{\mathbb{R}^{3}}
v^*_kv^*_j\partial_{v^*_j}\Psi_{\tilde{n},\tilde{l},\tilde{m}}(v_*)dv_*
=-\int_{\mathbb{R}^{3}}
v^*_k\Psi_{\tilde{n},\tilde{l},\tilde{m}}(v_*)dv_*.
\end{align*}
By the relation \eqref{relation2}, one can verify that, for any $2\tilde{n}+\tilde{l}>2$, $|\tilde{m}|\leq \tilde{l}$,
\begin{align*}
&\int_{\mathbb{R}^{3}}\Psi_{\tilde{n},\tilde{l},\tilde{m}}(v_*)dv_*
=(\varphi_{\tilde{n},\tilde{l},\tilde{m}},\varphi_{0,0,0})_{L^2(\mathbb{R}^3)}=0,\\
&\int_{\mathbb{R}^{3}}v^*_k\Psi_{\tilde{n},\tilde{l},\tilde{m}}(v_*)dv_*=0,\quad\forall\,1\leq\,k\leq3\\
&\int_{\mathbb{R}^{3}}v^*_kv^*_j\Psi_{\tilde{n},\tilde{l},\tilde{m}}(v_*)dv_*=0,\quad\forall\,1\leq\,k,j\leq3.
\end{align*}
We can conclude that, for all $2\tilde{n}+\tilde{l}>2$, $|\tilde{m}|\leq \tilde{l}$,
\begin{align*}
{\bf L}(\varphi_{\tilde{n},\tilde{l},\tilde{m}},\varphi_{n,l,m})\equiv0.
\end{align*}
\end{proof}

For the proof of the Proposition \ref{recur}, we recall the elementary result about the Legendre polynomial in the following.
\begin{lemma}\label{recurrence}
Let $l\in\mathbb{N}$ be nonnegative integer, $P_l(x)$ is the Legendre polynomial,  we have
\begin{equation*}
\left\{ \begin{aligned}
P_2(x)P_0(x)&=P_2(x),\\
P_2(x)P_1(x)&=\frac{3}{5}P_3(x)+\frac{2}{5}P_1(x),\\
P_1(x)P_l(x)&=\frac{l+1}{2l+1}P_{l+1}(x)+\frac{l}{2l+1}P_{l-1}(x),\\
P_2(x)P_l(x)&=\frac{3(l+2)(l+1)}{2(2l+3)(2l+1)}P_{l+2}(x)+\frac{(l+1)l}{(2l+3)(2l-1)}P_l(x)\\
         &\qquad+\frac{3l(l-1)}{2(2l+1)(2l-1)}P_{l-2}(x)\quad \text{for}\quad\,l\ge2.
\end{aligned} \right.
\end{equation*}
\end{lemma}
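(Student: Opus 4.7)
The plan is to derive all four identities from the classical Bonnet three-term recurrence
\[
(2l+1)\,xP_l(x) = (l+1)P_{l+1}(x) + l\,P_{l-1}(x), \qquad l\ge 1,
\]
together with the elementary explicit forms $P_0(x)=1$, $P_1(x)=x$, $P_2(x)=\tfrac{3x^2-1}{2}$, $P_3(x)=\tfrac{5x^3-3x}{2}$. The lemma is a self-contained piece of classical special-function algebra, so the proof is essentially bookkeeping once the right identity is in hand.

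First I would dispose of the third identity by observing that it is simply Bonnet's recurrence rearranged: since $P_1(x)=x$, dividing through by $(2l+1)$ yields exactly $P_1(x)P_l(x)=\tfrac{l+1}{2l+1}P_{l+1}(x)+\tfrac{l}{2l+1}P_{l-1}(x)$ for every $l\ge 1$.

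For the fourth identity with $l\ge 2$, I would write $P_2(x)P_l(x) = \tfrac{3}{2}x^2P_l(x) - \tfrac{1}{2}P_l(x)$ and compute $x^2P_l(x)$ by iterating Bonnet's recurrence: apply it once to obtain $xP_l$ as a linear combination of $P_{l+1}$ and $P_{l-1}$, then apply it again separately to $xP_{l+1}$ and $xP_{l-1}$. This produces an expansion of $x^2P_l$ in the basis $\{P_{l+2},P_l,P_{l-2}\}$ with explicit rational coefficients. Multiplying by $\tfrac{3}{2}$ gives the stated coefficients of $P_{l+2}$ and $P_{l-2}$ immediately. The only non-trivial step is simplifying the coefficient of $P_l$, i.e.\ verifying
\[
\tfrac{3}{2}\!\left[\tfrac{(l+1)^2}{(2l+1)(2l+3)}+\tfrac{l^2}{(2l+1)(2l-1)}\right]-\tfrac{1}{2} \;=\; \tfrac{(l+1)l}{(2l+3)(2l-1)},
\]
which, after clearing the common denominator $2(2l+1)(2l+3)(2l-1)$, reduces to the polynomial identity $3\bigl(4l^3+6l^2-1\bigr)-(2l+1)(2l+3)(2l-1)=2l(l+1)(2l+1)$. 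This is the only real algebraic obstacle in the proof, and it is routine.

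Finally, the first two identities correspond to the degenerate values $l=0$ and $l=1$, where the general formula would involve the undefined $P_{-2}$ or $P_{-1}$. Both are settled by direct inspection: $P_2P_0=P_2$ is trivial, and $P_2(x)P_1(x)=\tfrac{3x^3-x}{2}$ matches $\tfrac{3}{5}P_3(x)+\tfrac{2}{5}P_1(x)=\tfrac{3}{5}\cdot\tfrac{5x^3-3x}{2}+\tfrac{2}{5}x$ by comparing coefficients of $x^3$ and $x$. This completes the plan.
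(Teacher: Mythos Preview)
Your proof is correct. The paper itself does not give a proof of this lemma at all: it merely states the result and remarks that the general case can be found in Example~11, Chapter~XV of Whittaker--Watson or in formula~(1.4) of the appendix in Jones. Your approach, by contrast, is fully self-contained: you derive the third line directly from Bonnet's recurrence $(2l+1)xP_l=(l+1)P_{l+1}+lP_{l-1}$, then iterate it to expand $x^2P_l$ and obtain the fourth line (the algebraic simplification for the middle coefficient checks out), and finally settle the degenerate cases $l=0,1$ by direct computation with the explicit low-degree polynomials. This buys the reader an elementary verification independent of the cited sources, at the cost of a short but unavoidable algebraic computation; the paper's route trades that computation for a citation.
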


For more general case, we can refer to the Example 11 in Chap.XV in \cite{Whit-Watson} or $(1.4)$ in Appendix 1 in \cite{Jones}.

\bigskip
\noindent
{\bf Proof of the Proposition \ref{recur}}

Recalled from Lemma \ref{lemma5.4+} that
\begin{align*}
A^1_{n-2,l+2,m,m_2}&=-4\sqrt{\frac{\pi}{15}}
\sqrt{4n(n-1)}\int_{\mathbb{S}^2}Y^{m_2}_2(\omega)
Y^{m}_{l+2}(\omega)Y^{-m_2-m}_{l}(\omega)d\omega;\\
A^2_{n-1,l,m,m_2}=&4\sqrt{\frac{\pi}{15}}
\sqrt{2n(2n+2l+1)}\int_{\mathbb{S}^2}Y^{m_2}_2(\omega)
Y^{m}_{l}(\omega)Y^{-m_2-m}_{l}(\omega)d\omega;\\
A^3_{n,l-2,m,m_2}=&-4\sqrt{\frac{\pi}{15}}
\sqrt{(2n+2l+1)(2n+2l-1)}\int_{\mathbb{S}^2}Y^{m_2}_2
Y^{m}_{l-2}Y^{-m_2-m}_{l}d\omega.
\end{align*}
We recalled the addition theorem (7-34) of Chapter 7 in \cite{JCSlater}, (VIII) of Sec.19, Chap. III in \cite{San} or Theorem 1 of Sec.4, Chap. 1 in \cite{CM} that, for $\sigma,\kappa\in\mathbb{S}^2$,
$$P_k(\sigma\cdot\kappa)=\frac{4\pi}{2k+1}\sum_{|m|\leq\,k}Y^m_k(\sigma)Y^{-m}_k(\kappa),\quad\forall\,k\in\mathbb{N}.$$
Therefore, for any $m^*\in\mathbb{Z}$ and $|m^*|\leq\,l$,
\begin{align*}
&\sum_{\substack{|m|\leq\,l+2, |m_2|\leq2\\m+m_2=m^*}}\left|A^1_{n-2,l+2,m,m_2}\right|^2\\
&=\frac{64n(n-1)\pi}{15}\sum_{|m|\leq\,l+2}\sum_{|m_2|\leq2}\left|\int_{S^2}Y^{m}_{l+2}Y_2^{m_2}Y^{m^*}_ld\sigma\right|^2\\
&=\frac{64n(n-1)\pi}{15}\int_{S^2_{\sigma}}\int_{S^2_{\kappa}}\frac{5}{4\pi}\frac{2l+5}{4\pi}P_2(\kappa\cdot\sigma)P_{l+2}(\kappa\cdot\sigma)
Y^{m^*}_l(\kappa)Y^{-m^*}_l(\sigma)d\kappa\,d\sigma.
\end{align*}
By using Lemma \ref{recurrence} and the orthogonal of $\{Y^m_l\}_{l\in\mathbb{N}, |m|\leq\,l}$ on $\mathbb{S}^2$, we have, for $n\ge2$,
\begin{align*}
&\sum_{\substack{|m|\leq\,l+2, |m_2|\leq2\\m+m_2=m^*}}\left|A^1_{n-2,l+2,m,m_2}\right|^2\\
&=\frac{64n(n-1)\pi}{15}\frac{5}{4\pi}\frac{2l+5}{4\pi}\frac{3(l+2)(l+1)}{2(2l+5)(2l+3)}\frac{4\pi}{2l+1}
\\
&=\frac{8n(n-1)(l+2)(l+1)}{(2l+3)(2l+1)}\leq\frac{16n(n-1)}{3}.
\end{align*}
This is the estimation \eqref{A1}.  Similar to the proof of \eqref{A1},
one can deduce also from Lemma \ref{recurrence} and the orthogonal of $\{Y^m_l\}_{l\in\mathbb{N}, |m|\leq\,l}$ on $\mathbb{S}^2$ that
\begin{align*}
&\Big|A^2_{n-1,0,0,0}\Big|^2\\
&=\frac{32n(2n+1)\pi}{15}\int_{S^2_{\sigma}}\int_{S^2_{\kappa}}\frac{5}{4\pi}\frac{1}{4\pi}P_2(\kappa\cdot\sigma)P_0(\kappa\cdot\sigma)
Y^{0}_0(\kappa)Y^{0}_0(\sigma)d\kappa\,d\sigma\\
&=\frac{32n(2n+1)\pi}{15}\frac{5}{4\pi}\frac{1}{4\pi}\int_{S^2_{\sigma}}\int_{S^2_{\kappa}}P_2(\kappa\cdot\sigma)
Y^{0}_0(\kappa)Y^{0}_0(\sigma)d\kappa\,d\sigma
=0,\quad\forall\,n\ge1,
\end{align*}
and for any $m^*\in\mathbb{Z}$ and $|m^*|\leq\,l$
\begin{align*}
\sum_{\substack{|m|\leq\,l, |m_2|\leq2\\m+m_2=m^*}}\Big|A^2_{n-1,l,m,m_2}\Big|^2
&\leq\frac{32n(2n+2l+1)\pi}{15}\frac{5}{4\pi}\frac{(l+1)l}{(2l+3)(2l-1)}\\
&\leq\frac{4 n(2n+2l+1)}{3}, \quad\forall\,n\ge1,l\ge1.
\end{align*}
%We conclude that
%\begin{align*}
%\sum_{\substack{|m|\leq\,l, |m_2|\leq2\\m+m_2=m^*}}\Big|A^2_{n-1,l,m,m_2}\Big|^2
%&\leq\frac{32n(2n+2l+1)\pi}{15}\frac{5}{4\pi}\frac{(l+1)l}{(2l+3)(2l-1)}\\
%&\leq\frac{4n(2n+2l+1)}{3}, \quad\forall\,n\ge1.
%\end{align*}
Finally, one can estimate that
\begin{align*}
&\sum_{\substack{|m|\leq\,l-2, |m_2|\leq2\\m+m_2=m^*}}\left|A^3_{n,l-2,m,m_2}\right|^2\\
&=\frac{16(2n+2l+1)(2n+2l-1)\pi}{15}\frac{5}{4\pi}\frac{3l(l-1)}{2(2l+1)(2l-1)}\\
&\leq\frac{(2n+2l+1)(2n+2l-1)}{2} \quad\forall\,n\in\mathbb{N},\,l\ge2.
\end{align*}
The estimations \eqref{A2} and \eqref{A3} follow.  We end the proof of Proposition \ref{recur}.

%%%%%%%%%%%%%%%%%%%%%%%%%%%%%% SECTION 7 APPENDIX
%%%%%%%%%%%%%%%%%%%%%%%%%%%%%% SECTION 7 APPENDIX

\section{Appendix}\label{appendix}

The proof of the example and the characterization of the Gelfand-Shilov spaces and the Shubin spaces are presented in this section. For the self-content of paper,
we will present some proof here.

\begin{proof}[The proof of the Example \ref{Example}] Now we prove that the function $g_0$ defined in \eqref{ex1} is belongs to $ Q^{\alpha}(\mathbb{R}^3)\cap \mathcal{N}^{\perp}$ and $\|\mathbb{S}_2g\|_{L^2(\mathbb{R}^3)}=0$ for $\alpha<-\frac{3}{2}$.

Recalled the spectrum functions $\varphi_{n,l,m}(v)$ with $2n+l\leq2,\,|m|\leq\,l$, we have
$$
g_0=\frac{1}{\sqrt{\mu}}\delta_{\mathbf{0}}-
\left(\frac{5}{2}-\frac{|v|^2}{2}\right)\sqrt{\mu}
=\frac{1}{\sqrt{\mu}}\delta_{\mathbf{0}}-\varphi_{0,0,0}-\sqrt{\frac{3}{2}}\varphi_{1,0,0}.
$$
One can calculate directly that
\begin{align*}
&\langle\,g_0, \varphi_{0,0,0}\rangle=\langle\delta_0, 1\rangle-\langle\varphi_{0,0,0},\varphi_{0,0,0}\rangle=0;\\
&\langle\,g_0, \varphi_{1,0,0}\rangle=\langle \delta_0,\, \sqrt{\frac{2}{3}}\left(\frac{3}{2}-\frac{|v|^2}{2}\right)\rangle-\sqrt{\frac{3}{2}}\langle\varphi_{1,0,0},\varphi_{1,0,0}\rangle=0.
\end{align*}
Since $g_0$ is radial, we can verify that
$$
\langle g_0, \varphi_{n,l,m}\rangle\equiv0,\quad\forall\,2n+l\leq2,\,|m|\leq\,l.
$$
This shows that $g_0\in\mathcal{N}^{\perp}$ and $\|\mathbb{S}_2g\|_{L^2(\mathbb{R}^3)}=0$.  Now we prove that $g_0\in \,Q^{\alpha}(\mathbb{R}^3)$ for $\alpha<-\frac{3}{2}$. Since $g_0\in\mathcal{N}^{\perp}$ and radial, we can write $g_0$ in the form
$$
g_0=\sum^{+\infty}_{k=2}\langle g_0, \varphi_{k,0,0}\rangle\, \varphi_{k,0,0},
$$
where we can calculate in details that,
$$
\langle g_0, \varphi_{k,0,0}\rangle=\langle \mu^{-\frac 12}\delta_{\mathbf{0}}, \varphi_{k,0,0}\rangle=\sqrt{\frac{2\Gamma(k+\frac{3}{2})}{\sqrt{\pi}k!}}.
$$
By using the Stirling equivalent
$$
\Gamma(x+1)\sim_{x\rightarrow+\infty}\sqrt{2\pi x}\left(\frac{x}{e}\right)^x,
$$
we have that, $\forall k\geq2$
$$
\sqrt{\frac{2\Gamma(k+\frac{3}{2})}{\sqrt{\pi}k!}}\sim k^{\frac{1}{4}}\, .
$$
Therefore, for any $\alpha<-\frac{3}{2}$,
$$
\|g_0\|^2_{Q^{\alpha}(\mathbb{R}^3)}
=\sum^{+\infty}_{k=2}(4k)^{\alpha}|\langle g_0, \varphi_{k,0,0}\rangle|^2
\lesssim\sum^{+\infty}_{k=2}k^{\alpha+\frac{1}{2}}<+\infty.
$$
This implies that $g_0\in\,Q^{\alpha}(\mathbb{R}^3)$, we end the proof of the Example.
\end{proof}

\smallskip\noindent
{\bf Gelfand-Shilov spaces.}  The symmetric Gelfand-Shilov space $S^{\nu}_{\nu}(\mathbb{R}^3)$ can be characterized through the decomposition
into the Hermite basis $\{H_{\alpha}\}_{\alpha\in\mathbb{N}^3}$ and the harmonic oscillator $\mathcal{H}=-\triangle +\frac{|v|^2}{4}$.
For more details, see Theorem 2.1 in \cite{GPR}
\begin{align*}
f\in S^{\nu}_{\nu}(\mathbb{R}^3)&\Leftrightarrow\,f\in C^\infty (\mathbb{R}^3),\exists\, \tau>0, \|e^{\tau\mathcal{H}^{\frac{1}{2\nu}}}f\|_{L^2}<+\infty;\\
&\Leftrightarrow\, f\in\,L^2(\mathbb{R}^3),\exists\, \epsilon_0>0,\,\,\Big\|\Big(e^{\epsilon_0|\alpha|^{\frac{1}{2\nu}}}(f,\,H_{\alpha})_{L^2}\Big)_{\alpha\in\mathbb{N}^3}\Big\|_{l^2}<+\infty;\\
&\Leftrightarrow\,\exists\,C>0,\,A>0,\,\,\|(-\triangle +\frac{|v|^2}{4})^{\frac{k}{2}}f\|_{L^2(\mathbb{R}^3)}\leq AC^k(k!)^{\nu},\,\,\,k\in\mathbb{N}
\end{align*}
where
$$H_{\alpha}(v)=H_{\alpha_1}(v_1)H_{\alpha_2}(v_2)H_{\alpha_3}(v_3),\,\,\alpha\in\mathbb{N}^3,$$
and for $x\in\mathbb{R}$,
$$H_{n}(x)=\frac{(-1)^n}{\sqrt{2^nn!\pi}}e^{\frac{x^2}{2}}\frac{d^n}{dx^n}(e^{-x^2})
=\frac{1}{\sqrt{2^nn!\pi}}\Big(x-\frac{d}{dx}\Big)^n(e^{-\frac{x^2}{2}}).$$
For the harmonic oscillator $\mathcal{H}=-\triangle +\frac{|v|^2}{4}$ of 3-dimension and $s>0$, we have
$$
\mathcal{H}^{\frac{k}{2}} H_{\alpha} = (\lambda_{\alpha})^{\frac{k}{2}}H_{\alpha},\,\, \lambda_{\alpha}=\sum^3_{j=1}(\alpha_j+\frac{1}{2}),\,\,k\in\mathbb{N},\,\alpha\in\mathbb{N}^3.
$$

\smallskip\noindent
{\bf Shubin spaces.} We refer the reader to the works \cite{GPR, Shubin} for the Shubin spaces.
Let $\tau\in\mathbb{R}$, The Shubin spaces $Q^{\tau}(\mathbb{R}^3)$
can be also characterized through the decomposition into the Hermite basis :
\begin{align*}
f\in Q^{\tau}(\mathbb{R}^3)
&\Leftrightarrow\,f\in\,\mathcal{S}'(\mathbb{R}^3),\,\,
\Bigl\|\mathcal{H}^{\frac{\tau}{2}} \, f\Bigr\|_{L^2}<+\infty;
\\
&\Leftrightarrow\, f\in\,\mathcal{S}'(\mathbb{R}^3),\,\,
\Big\|\Big(
(|\alpha|+\frac{3}{2})^{\tau/2} (f,\,H_{\alpha})_{L^2}
\Big)_{\alpha\in\mathbb{N}^3}\Big\|_{l^2}<+\infty\,,
\end{align*}
and for $\tau>0$,
$$
Q^{\tau}(\mathbb{R}^3)\subsetneq H^{\tau}(\mathbb{R}^3)
$$
where $H^{\tau}(\mathbb{R}^3)$ is the usuel Sobolev space. In fact,
$$
\mathcal{H} f\in L^2(\mathbb{R}^3)\,\,\Rightarrow\,\, \triangle f,\,  |v|^2 f\,  \in L^2(\mathbb{R}^3).
$$
So that for the negative index, we have,
$$
H^{-\tau}(\mathbb{R}^3) \subsetneq Q^{-\tau}(\mathbb{R}^3).
$$
See more details in the Appendix in \cite{MPX}.

\bigskip
\noindent {\bf Acknowledgements.}
The first author is supported by the Natural Science Foundation of China under Grant No.11701578. The research of the second author was supported partially by  ``The Fundamental Research Funds for Central Universities of China''.

\end{document}